\numberwithin{equation}{section}
\def\eqref#1{(\ref{#1})}
\newcommand{\Z}{{\mathbb Z}}
\def\C{{\mathbb C}}
\def\O{{\mathcal O}}
\def\CP{{\mathbb{CP}}}
\newcommand{\R}{{\mathbb{R}}}
\def\1{\sqrt{-1}\:}
\newcommand{\SL}{{\mathrm{SL}}}
\newcommand{\GL}{{\mathrm{GL}}}
\newcommand{\MdR}{\mathcal{M}_\text{dR}}
\newcommand{\MHod}{\mathcal{M}_\text{Hod}}
\newcommand{\MH}{\mathcal{M}_\text{H}}
\newcommand{\MDH}{\mathcal{M}_\text{DH}}
\renewcommand{\bar}{\overline}
\renewcommand{\phi}{\varphi}
\renewcommand{\epsilon}{\varepsilon}
\renewcommand{\geq}{\geqslant}
\renewcommand{\leq}{\leqslant}
\renewcommand{\max}{{\rm max}}
\newcommand{\End}{\operatorname{End}}
\newcommand{\Hom}{\operatorname{Hom}}
\newcommand{\Hol}{\operatorname{Hol}}
\newcommand{\diag}{\operatorname{\sf diag}}
\newcommand{\rk}{\operatorname{rk}}
\newcommand{\Tr}{\operatorname{Tr}}
\newcommand{\CL}{\mathcal{CL}}
\newcommand{\delbar}{\bar{\partial}}
\renewcommand{\Re}{\operatorname{Re}}
\renewcommand{\Im}{\operatorname{Im}}
\newtheorem{theorem}{Theorem}[section]
\newtheorem{cor}[theorem]{Corollary}
\newtheorem{prop}[theorem]{Proposition} 
\newtheorem{lem}[theorem]{Lemma} 
\newtheorem{thm}[theorem]{Theorem} 
\theoremstyle{definition}
\newtheorem{definition}{Definition}[section]
\newtheorem{rmk}[definition]{Remark}
\newtheorem{deff}[definition]{Definition}
\begin{document}
%%%%%%%%%%%%%%%%%%%%%%%%%%%%%%%%%%%%%%%%%%%%%%%%%%%%%%%%%%%%
\begin{center}
{\LARGE\bf
Nilpotent Higgs bundles and families of flat connections\\[3mm]
}
%%%%%%%%%%%%%%%%%%%%%%%%%%%%%%%%%%%%%%%%%%%%%%%%%%%%%%%%%%%%

   Sebastian Schulz %\footnote{The work of S.~Schulz is partially supported by NSF grants DMS 1107452, 1107263, 1107367 "RNMS: GEometric structures And Representation varieties" (the GEAR Network)."

 %{\bf Keywords:} Higgs bundles,  orthogonal representations, spectral data  Hitchin fibration, $(B,A,A)$-branes.

%{\bf 2000 Mathematics Subject Classification:} }

\end{center}

%%%%%%%%%%%%%%%%%%%%%%%%%%%%%%%%%%%%%%%%%%%%%%%%
{\small \hspace{0.08\linewidth}
\begin{minipage}[t]{0.84\linewidth}
{\bf Abstract} \\  We investigate $\C^\times$-families of flat connections whose leading term is a nilpotent Higgs field. Examples of such families include real twistor lines and families arising from the conformal limit. We show that these families have the same monodromy as families whose leading term is a regular Higgs bundle and use this to deduce that traces of holonomies are asymptotically exponential in rational powers of the parameter of the family.  \end{minipage}
}
%%%%%%%%%%%%%%%%%%%%%%%%%%%%%%%%%%%%%%%%%%%%%%%%

\tableofcontents

%%%%%%%%%%%%%%%%%%%%%%%%%%%%%%%%%%%%%%%%%%%%%%%%
%\newpage
\section{Introduction} \label{sec:intro}

\subsection{Higgs bundles and the WKB method}
The theory of Higgs bundles is intimately intertwined with the theory of families of flat connections. Given a Higgs bundle $(\mathcal{E}, \phi)$ on a Riemann surface $C$, there are several $\C^\times$-families $$\nabla_\epsilon = \epsilon^{-1} \phi + D + \epsilon \psi$$ of flat connections naturally associated to it, for example through the \textit{Non-Abelian Hodge Correspondence} and through the \textit{Conformal Limit} construction. It is natural to ask how these the monodromy of these connections behave in the $\epsilon \to 0$ limit. The \textit{exact WKB method} \cite{Voros83} is an important tool in tackling problems of this sort and has played a pivotal role  in shedding light on a variety of geometric structures over the last decade.

Firstly, Gaiotto-Moore-Neitzke~\cite{GMN09} proposed a conjectural description of the asymptotics of the Hitchin system's Hyperk\"ahler metric that was motivated by the WKB method. Their conjecture has since been confirmed in many different settings \cite{MSSW16, MSSW, DumasNeitzke, Fredrickson18A, FMSW}. 

Secondly, Iwaki-Nakanishi \cite{IwakiNakanishi, IwakiNakanishi2} have related the exact WKB method to the theory of \textit{Cluster algebras}. Cluster algebras, first introduced by Fomin-Zelevinsky~\cite{FZCluster}, have proven ubiquitous in Higher Teichm\"uller Theory thanks to the work of Fock-Goncharov~\cite{FockGoncharov} and are linked to a variety of facets of mathematics, see e.g.~\cite{KellerClusterIntro} for a survey.

Thirdly, though not unrelated, WKB analysis is well-known to be related to the study of Donaldson-Thomas invariants and their wall-crossing structure \cite{AllegrettiVorosSymbols, AllegrettiBridgeland, Bridgeland_RH_from_DT, BridgelandSmith}.

All of these applications are subject to a certain genericity condition, namely that the Higgs field can be diagonalized with distinct eigenvalues, at least away from a finite set of points of $C$. Under this condition, the exact WKB method gives the following: For any closed loop $\gamma$ on $C$, the trace of the holonomy around $\gamma$ grows exponentially in $\epsilon^{-1}$ in the $\epsilon \searrow 0$ limit\footnote{That is $\epsilon \to 0$ while $\epsilon > 0$.}. More precisely, there exists a constant $Z_\gamma \in \C^\times$, $\Re (Z_\gamma) >0$ such that
\begin{equation} \label{eq:HolWKBcurve}
    \lim_{\epsilon \searrow 0} \big( \Tr \Hol_\gamma (\nabla_\epsilon) \cdot \exp (-\epsilon^{-1} Z_\gamma )  \big)
\end{equation}
exists and is non-zero, see Section~\ref{subsec:WKBreview} for a more detailed discussion. The constant $Z_\gamma$ admits a geometric interpretation as a period of the \textit{spectral curve}, a certain (ramified) cover of $C$ determined by the Higgs bundle $(\mathcal{E}, \phi)$.

%in at least two important ways, namely as a  \textit{Real Twistor Line} or through the \textit{Conformal Limit}. In either case, $D$ is a (typically non-flat) connection on $\mathcal{E}$ whose $(0,1)$-part $D^{(0,1)}$ coincides with the holomorphic structure $\delbar_E$ on $\mathcal{E}$, and $\psi$ is a certain $\End (\mathcal{E})$-valued 1-form. It is natural to ask how these connections degenerate in the $\epsilon \to 0$ limit.

%A Higgs field $\phi$ is a certain vector bundle endomorphism that can generically be diagonalized with distinct eigenvalues. Under this condition, the small $\epsilon$ limit can be studied by the \textit{exact WKB method} (\textcolor{red}{References}). One implication of this analysis is the following: 

A similar description without this genericity assumption has so far been unknown, and in this paper we deal with the most degenerate case of a \textit{nilpotent Higgs bundle}, meaning one for which the eigenvalues of $\phi$ are all identically zero. The exact WKB method is not directly applicable in this situation because it is unclear what the analogue of the spectral curve should be. Moreover, the constant $Z_\gamma$ in $\ref{eq:HolWKBcurve}$ is an integral of the eigenvalue of $\phi$ with the largest real part, which in this case would simply yield zero.

\subsection{Families of flat connections}

Before we proceed, let us briefly explain what kind of families of flat connections we consider. A Higgs bundle on a Riemann surface $C$ is a pair $(\delbar_E, \phi)$ consisting of a holomorphic vector bundle $(E, \delbar_E)$ and a holomorphic endomorphism $\phi \in H^0 (C, End(E) \otimes K_C)$ where $K_C$ denotes the holomorphic cotangent bundle of $C$. A Higgs bundle generically obeys a stability condition analogous to that for vector bundles, and for stable Higgs bundles there exists a unique hermitian metric $h$ such that 
\begin{equation} \label{eq:IntroTwistorLine}
    \nabla_\zeta = \zeta^{-1} \phi + D_h + \zeta \phi^{\dagger_h}
\end{equation}
is a flat connection for all $\zeta \in \C^\times$, where $D_h$ denotes the Chern connection of the pair $(\delbar_E, h)$. This is a consequence of the Non-Abelian Hodge Correspondence and $\nabla_\zeta$ is known as a \textit{real twistor line}; see Section~\ref{sec:NAHT} for more details.

We shall also be interested in a family of flat connections known as the \textit{Conformal Limit} family, as follows: For any $R \in \mathbb{R}_{>0}$ there exists a hermitian metric $h_R$ such that
\begin{equation}
    \nabla (\zeta, R) := \zeta^{-1} R \phi + D_{h_R} + \zeta R \phi^{\dagger_{h_R}}
\end{equation}
is a flat connection for all $\zeta \in \C^\times$. Letting $\hbar = R^{-1} \zeta$ this becomes a family
\begin{equation}
    \nabla_{\hbar, R} = \hbar^{-1} \phi + D_{h_R} + \hbar R^2 \phi^{\dagger_{h_R}}.
\end{equation}
The $R \to 0$ limit of this family is known as the $\hbar$-\textit{conformal limit} $\CL_\hbar(\delbar_E, \phi)$ of the Higgs bundle $(\delbar_E, \phi)$. We provide more details in Section~\ref{sec:CL}.

\subsection{Main results}

Our main theorems concern a weaker version of WKB theory that is not applicable to all curves $\gamma$ but only to curves we call \textit{WKB curves}, see section~\ref{subsec:WKBreview}. It is important to note that WKB curves exist on any Riemann surface $C$ and for any non-nilpotent $\SL (2, \C)$-Higgs bundle $(\delbar_E, \phi)$ on it, see Appendix~\ref{app:WKBcurves}. Also, recall that the moduli space of Higgs bundles $\MH$ admits a $\C^\times$-action given by $\xi \cdot (\delbar_E, \phi) = (\delbar_E, \xi \cdot \phi)$. Then we prove that the holonomy of a real twistor line around $\gamma$ grows exponentially in $\zeta^{-1/2}$. More precisely, 

\medskip

\noindent \textbf{Theorem \ref{thm:WKBInRank2}} \textit{For a nilpotent $\SL(2, \C)$-Higgs bundle that is not a fixed point of the $\C^\times$-action, there exists a closed curve $\gamma$ on $C$ and a $Z_\gamma \in \C$, $\Re (Z_\gamma ) >0$ such that the limit
\begin{equation} \label{intro:MainLimit}
    \lim_{\zeta \searrow 0} \big( \Tr \Hol_\gamma (\nabla_\zeta) \cdot \exp (-\zeta^{-1/2} Z_\gamma )  \big),
\end{equation}
exists and is non-zero. Here, $\Hol_\gamma$ denotes the holonomy around $\gamma$ and $\nabla_\zeta$ is the family of flat connections (\ref{eq:IntroTwistorLine}).} 

\medskip

In fact, we will show that $Z_\gamma$ is a period of the spectral curve $\Sigma'$ of a non-nilpotent Higgs bundle $(\delbar'_E, \Phi)$ that is canonically obtained from $(\delbar_E, \phi)$. Any curve $\gamma$ that is a WKB curve with respect to $(\delbar'_E, \Phi)$ will obey~(\ref{intro:MainLimit}).

Theorem~\ref{thm:WKBInRank2} holds in slightly more generality then we have stated here. For example, we show that the analogous statement is true for the conformal limit family $\mathcal{CL}_\hbar (\delbar_E, \phi)$ if the vector bundle $(E, \delbar_E)$ is itself stable.

Let us explain how this \textit{secondary Higgs bundle} $(\delbar'_E, \Phi)$ arises. If $\phi \neq 0$ then $(\delbar_E, \phi)$ defines a $C^\infty$-decomposition $E = K \oplus L$ into line bundles, where $K = \ker \phi$ and $L = K^{\bot_h}$ is the orthogonal complement with respect to the harmonic metric $h$. The matrix $g(\epsilon) = \diag (\epsilon^{-1/4}, \epsilon^{1/4})$ defines a gauge transformation via conjugation, and applied to the family $\nabla_\epsilon$ it yields a new family
\begin{equation} \label{eq:IntroNewFamily}
    \nabla'_\epsilon := g(\epsilon)^{-1} \cdot \nabla_\epsilon \cdot g(\epsilon) = \epsilon^{-1/2} \Phi + \delbar'_E + \partial'_E + O (\epsilon^{1/2}).
\end{equation}
We use this construction and prove the following 

\medskip

\noindent \textbf{Proposition \ref{prop:SL2TwistorLine}} \textit{Let $\nabla_\zeta$ be the real twistor line of a stable, nilpotent Higgs bundle $(\delbar_E, \phi)$. Then there exists a gauge-equivalent family \begin{equation}
 \nabla'_\zeta = \zeta^{-1/2} \Phi + \delbar'_E + \partial'_E + O (\zeta^{1/2}) 
\end{equation}
of flat connections such that $(\delbar'_E, \Phi )$ defines a holomorphic, stable Higgs bundle. Moreover, $\Phi$ is nilpotent if and only if $(\delbar_E, \phi)$ is a fixed point of the $\C^\times$-action.} 

\medskip

We prove the analogous statement for a conformal limit family $\mathcal{CL}_\hbar (\delbar_E, \phi)$ provided that the vector bundle $(E, \delbar_E)$ is itself stable (Proposition \ref{prop:SL2CL} below). It is instructive to note that the gauge transformation~(\ref{eq:IntroNewFamily}) does not necessarily preserve the \textit{type} of the family. For example, if $\nabla_\zeta$ represents a real twistor line then $\nabla'_\zeta$ does not because it is not real (Proposition~\ref{prop:NotATwistorLine} below). If $\nabla_\hbar$ arises from the conformal limit, then $\nabla'_\hbar$ has terms of powers $1/2$, $1$ and $3/2$ in $\hbar$.

Proposition \ref{prop:SL2TwistorLine} is the key step to proving Theorem \ref{thm:WKBInRank2}, as it allows to use the WKB method on the family $\nabla'_\zeta$. It is worth noting that the WKB method is typically employed on families that contain only integer powers of $\zeta$. There is no major obstruction to using the WKB method on families that contain rational powers of $\zeta$ and this was recently described by Thomas \cite{ThomasRationalWKB} as the \textit{Rational WKB method} and used to construct a formal embedding of the cotangent bundle of the space of higher complex structures into the character variety. It would be very interesting to further investigate the connection between these two different appearances of the rational WKB method.

As an application of Theorem~\ref{thm:WKBInRank2} we prove part of a conjecture of Simpson~\cite{Simpson10} concerning a natural stratification of the moduli space of flat connections $\MdR$ known as the \textit{partial oper stratification}. Simpson conjectured that the individual leaves of the stratification are closed inside of $\MdR$ which we answer affirmatively for the leaves inside of the unique open stratum in Corollary~\ref{cor:SimpsonConj} in the $\SL(2, \C )$ case. Note that this was already known for a minimal dimensional stratum known as the \textit{space of opers}.

\subsection{Generalizations}

We subsequently discuss the generalization to higher rank Higgs bundles. A novelty is a certain integer $m$ ($2 \leq m \leq n$) that is an invariant of a nilpotent $\SL (n, \C)$-Higgs bundle which is not a fixed point of the $\C^\times$-action. We restrict our attention to the family of real twistor lines $\nabla_\zeta $ and prove 

\medskip

\noindent \textbf{Theorem~\ref{thm:HigherRank}} \textit{Let $(\delbar_E, \phi)$ be a stable nilpotent $\SL(n, \C)$-Higgs bundle that is not a $\C^\times$-fixed point. Then there exists an integer $m$, $2 \leq m \leq n$, such that the associated real twistor line $\nabla_\zeta$ from~(\ref{eq:IntroTwistorLine}) is gauge-equivalent to a family of the form
\begin{equation}
  \nabla'_\zeta =  \zeta^{(1-m)/m} \Phi + \dots + \delbar'_E + \partial'_E + \dots
\end{equation}
and the pair $(\delbar'_E, \Phi)$ defines a stable $\SL(n, \C)$-Higgs bundle that is not nilpotent.} 

\medskip

As a consequence, we expect that asymptotically
\begin{equation}
    \Tr \Hol_\gamma (\nabla_\zeta) = \Tr \Hol_\gamma (\nabla'_\zeta) \sim \exp (\zeta^{(1-m)/m} Z_\gamma)
\end{equation}
for some constant $Z_\gamma$. This is based on the higher rank WKB method which is more complicated and less well-developed.

Lastly, we discuss parabolic $\SL(2, \C)$-Higgs bundles and show that the statements of Proposition~\ref{prop:SL2TwistorLine} and Theorem~\ref{thm:WKBInRank2} hold for parabolic twistor lines. The main difficulty is that one has to make sure that the new family $\nabla'_\zeta$ behaves well near the puncture, i.e.\ that the constituents of $\nabla'_\zeta$ have the correct singular behavior.

Though we do not discuss it in detail here, the generalization to $\mathrm{GL}(n, \C)$-Higgs bundles should be immediate. It would be interesting to see the generalization to $G_\C$-Higgs bundles for arbitrary reductive groups $G_\C$ over $\C$.

\medskip

\noindent \textbf{Structure:} This paper is organized as follows: In Section~\ref{sec:Background} we recall necessary background about the Hitchin system. In Section~\ref{section:NilpotentWKB} we study the $\SL (2, \C)$-case described above, this section contains most of the novel ideas of this work. We generalize our results to the case of $\SL(n, \C)$ in Section~\ref{sec:SLN} and to parabolic $\SL (2, \C)$-Higgs bundles in section~\ref{sec:Parabolic}. We discuss the nilpotent cone of the moduli space of Higgs bundles on a four-punctured sphere, dubbed the \textit{toy model} for its simplicity, in Section~\ref{sec:ToyModel} and include in Appendix~\ref{app:WKBcurves} a review on (Half-) Translation surfaces that is key to our construction of WKB curves.

\medskip

\noindent \textbf{Acknowledgments:} I am most grateful to my advisor, Andy Neitzke, without whom none of this would have been possible. Not only did he suggest pursuing this problem, but he has been a constant source of encouragement and inspiration along the way, and I sincerely thank him for his guidance, insights and patience. I am indebted to Sam Raskin for his help, and would like to thank Sebastian Heller, Pengfei Huang and Mike Wolf for useful correspondences.

%\newpage
\section{The geometry of the Hitchin system} \label{sec:Background}
In this section we review the definition of Higgs bundles as well as the geometric properties of the moduli space of (stable) Higgs bundles. We will focus on the case $G = \SL (n, \C)$ and fix a closed, connected Riemann surface $C$ whose canonical bundle is denoted by $K_C$, as well as a complex vector bundle $E \to C$ of rank $n$ together with a trivialization of its determinant line bundle $\Lambda^n E$.
\subsection{The Hitchin system}
\begin{definition}
An $\SL (n, \C)$-Higgs bundle is a pair $(\bar{\partial}_E, \phi)$ consisting of
\begin{enumerate}[label=(\roman*)]
    \item a holomorphic structure $\bar{\partial}_E$ on $E$, together with a trivialization of its determinant line bundle $\det (E, \bar{\partial}_E ) \xrightarrow{\sim} \mathcal{O}_C$, and
    \item a traceless, holomorphic section $\phi \in H^0 (C, \End (E) \otimes K_C )$.
\end{enumerate}
\end{definition}
We will often denote holomorphic vector bundles by $\mathcal{E} = (E, \bar{\partial}_E) $. $\SL (n, \C )$-Higgs bundles form a coarse moduli space, at least once the appropriate stability conditions have been imposed. To this end, recall that the \textit{slope} $\mu$ of a complex vector bundle $E$ is the quotient
\begin{equation}
\mu (E) := \frac{\deg E}{\rk E}    
\end{equation}
of the degree of $E$ by its rank. The degree of a vector bundle is equal to the degree of its determinant line bundle, so the vector bundle underlying an $\SL (n, \C)$-Higgs bundle must have degree (and hence slope) zero.
 
\begin{definition}
A Higgs bundle $(\bar{\partial}_E, \phi)$ is called \textit{stable} if 
\begin{equation}
\mu (\mathcal{F}) <  \mu (\mathcal{E} )    
\end{equation}
for every proper, holomorphic, $\phi$-invariant subbundle $\mathcal{F} \subset \mathcal{E}$. Here, $\phi$-invariant means that $\phi(\mathcal{F}) \subset \mathcal{F} \otimes K_C$. A Higgs bundle is called \textit{polystable} if it is the direct sum of stable Higgs bundles, all of which have the same slope.
\end{definition}
Clearly, stability implies polystability. A morphism of Higgs bundles is one where the obvious diagram commutes, and stable (resp. polystable) Higgs bundles on $C$ up to isomorphism form a moduli space $\MH (C,n)$ (resp. $\MH^{ps}(C,n)$) known as the \textit{Dolbeault moduli space}, \textit{moduli space of stable Higgs bundles} or \textit{Hitchin system}. When there is no confusion we write simply $\MH := \MH(C, n)$, and similarly $\MH^{ps}$. $\MH^{ps}$ is a (singular) normal, quasiprojective variety of dimension $(n^2-1)(2g-2)$ whose smooth locus is $\MH$. Moreover, it is an algebraically completely integrable system~\cite{N1} which can be seen as follows: Note that the Higgs field $\phi$ locally on the surface is a (traceless, $K_C$-valued) endomorphism up to conjugation, so a more invariant way to describe it is in terms of its eigensystem. To this end, pick a homogeneous basis $p_2, \dots , p_n$ of the algebra of invariant polynomials $\C [\mathfrak{sl}(n,\C)]^{\SL (n, \C)}$ (e.g.\ the coefficients of the characteristic polynomial, or the polynomials $p_k (M) = \Tr(M^k)$). In the case of $\SL (n, \C)$ the polynomials will have degrees $\deg p_k = k$, and we can recall the definition of the \textit{Hitchin map}
\begin{align} \label{eq:HitchinFibration}
\begin{split}
   h : \MH (C, n) &\longrightarrow \mathcal{B} (C, n) := \bigoplus_{k=2}^n H^0(C, K_C^k),  \\
   (\bar{\partial}_E, \phi) &\longmapsto (p_2 (\phi), \dots , p_n (\phi)).
\end{split}
\end{align}
The space $\mathcal{B}=\mathcal{B}(C,n)$ is known as the \textit{Hitchin base}, and $h$ is a proper, surjective, holomorphic map whose fibers are special Lagrangian (hence generically complex, compact tori), turning $\MH^{ps}$ into an integrable system as previously advertised. %Note that $h$ does not depend on the choice of the $p_i$.

\subsection{Non-abelian Hodge Theory} \label{sec:NAHT}
Recall that a Hermitian metric $h$ on a holomorphic vector bundle $(E, \bar{\partial}_E)$ determines the \textit{Chern connection} $D$, the unique connection that is $h$-unitary and whose $(0,1)$-part agrees with the Dolbeault operator of the holomorphic bundle, $D^{(0,1)} = \bar\partial_E$. 

\begin{deff} Let $R \in \R_{>0}$. A \textit{harmonic metric with parameter $R$} for a Higgs bundle $(\bar{\partial}_E, \phi)$ is a Hermitian metric $h$ on $\mathcal{E}$ that induces the trivial metric on $\det \mathcal{E}$ and which solves the (rescaled) \textit{Hitchin's equation}
\begin{equation} \label{eq:HitchinEqn}
    F_D + R^2 \left[ \phi, \phi^{\dagger_h} \right] = 0.
\end{equation}
Here, $F_D$ is the curvature of the Chern connection $D$, and $\phi^{\dagger_h}$ denotes the adjoint with respect to the metric $h$.
\end{deff} 

A fundamental result in the theory of Higgs bundles is the celebrated \textit{Non-abelian Hodge Theorem} (NAHT), which dates back to Donaldson~\cite{DonaldsonNAHT} and Hitchin~\cite{N1} for $\SL_2$, and to Corlette~\cite{CorletteNAHT} and Simpson~\cite{simpson88} for the general case. Before we state it, recall that a flat connection $\nabla$ on a vector bundle $E$ is called
\begin{itemize}
    \item \textit{completely reducible} if for every $\nabla$-invariant subbundle $F \subset E$ there is a $\nabla$-invariant complement, and
    \item \textit{irreducible} if there are no non-trivial $\nabla$-invariant subbundles of $E$.
\end{itemize}
Irreducible (resp.\ completely reducible) flat connections up to equivalence form a moduli space $\MdR := \MdR (n,C)$ (resp.\ $\MdR^{cr}$), called the \textit{de Rham moduli space}, which is diffeomorphic (resp.\ homeomorphic) to $\MH$ (resp.\ $\MH^{ps}$) via the NAHT: 

\begin{theorem}[Non-abelian Hodge Theorem] \label{thm:NAHT}
Fix $R \in \R_{>0}$. A Higgs bundle $(\mathcal{E}, \phi)$ is polystable if and only if there exists a harmonic metric $h$ with parameter $R$. Moreover, for this $h$,
\begin{equation} \label{eq:RealTwistorLine}
    \nabla_{\zeta, R} (\bar{\partial}_E , \phi) := \zeta^{-1} R \phi + D + \zeta R \phi^{\dagger_h} 
\end{equation}
is a completely reducible flat connection for any $\zeta \in \C^\times$, which is irreducible if and only if $(\mathcal{E}, \phi)$ is stable. Conversely, a flat connection $\nabla$ is completely reducible if and only if there exists a hermitian metric $h$ such that in the unique decomposition of $\nabla$ in the form of~(\ref{eq:RealTwistorLine}) with $\zeta=1$ one has $\bar\partial_E \phi = 0$. In that case, $(\mathcal{E},\phi)$ is a polystable Higgs bundle which is stable if and only if $\nabla$ is irreducible.
\end{theorem}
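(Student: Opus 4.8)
The plan is to establish the two directions of the correspondence separately, after first isolating the purely algebraic content common to both. Write the Chern connection as $D = \partial_E + \delbar_E$ and set $\Psi_\zeta = \zeta^{-1} R\phi + \zeta R \phi^{\dagger_h}$, so that $\nabla_{\zeta,R} = D + \Psi_\zeta$. Because $C$ is a Riemann surface every $(2,0)$- and $(0,2)$-form vanishes, so expanding the curvature
\begin{equation}
F_{\nabla_{\zeta,R}} = F_D + D\Psi_\zeta + \Psi_\zeta \wedge \Psi_\zeta
\end{equation}
into powers of $\zeta$ makes the $\zeta^{\pm 2}$ terms drop out automatically (they are $\phi\wedge\phi$ and $\phi^{\dagger_h}\wedge\phi^{\dagger_h}$). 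The $\zeta^{-1}$ coefficient is $R\,\delbar_E\phi$, the $\zeta^{+1}$ coefficient is $R\,\partial_E\phi^{\dagger_h}$ (the $h$-adjoint of the former), and the $\zeta^{0}$ coefficient is $F_D + R^2[\phi,\phi^{\dagger_h}]$. Hence $\nabla_{\zeta,R}$ is flat for all $\zeta \in \C^\times$ if and only if $\delbar_E\phi = 0$ (holomorphicity of $\phi$) and Hitchin's equation~(\ref{eq:HitchinEqn}) holds. This reduces both implications to an existence statement for a metric solving~(\ref{eq:HitchinEqn}).

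For the forward direction I would invoke the Hitchin--Simpson existence theorem: a polystable Higgs bundle admits a metric $h$ solving~(\ref{eq:HitchinEqn}), unique once normalized on $\det\mathcal{E}$. I would prove this by the Kobayashi--Hitchin machinery adapted to Higgs bundles, e.g.\ running the Donaldson-type gradient flow for the Hitchin functional (or a continuity method in $R$) and using polystability to obtain the a priori $C^0$-estimate that prevents the flow from degenerating. Stability yields a strict estimate and hence an irreducible limiting solution, while polystability allows a block-diagonal limit corresponding to the stable summands. Combined with the algebraic reduction above, this produces $\nabla_{\zeta,R}$ and shows it is flat for every $\zeta$.

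For the converse I would use the Corlette--Donaldson theorem. Given a completely reducible flat connection $\nabla$, the associated monodromy representation is reductive, which is exactly the hypothesis guaranteeing existence of a harmonic metric $h$ (the relevant energy functional on the space of metrics on the universal cover is proper precisely under this condition). Writing $\nabla = D_h + \Psi$ with $D_h$ an $h$-unitary connection and $\Psi$ an $h$-self-adjoint $1$-form, harmonicity forces $\Psi$ to split as $\phi + \phi^{\dagger_h}$ with $\delbar_E\phi = 0$; since $\nabla$ is flat, this $(\mathcal{E},\phi)$ then satisfies~(\ref{eq:HitchinEqn}) and is therefore polystable by the forward direction.

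Finally I would match stability with irreducibility. Using the harmonic metric to pass between the two structures, a $\phi$-invariant holomorphic subbundle of equal slope corresponds to a $\nabla$-parallel subbundle and conversely, so the absence of such objects (stability) translates into irreducibility of $\nabla$, while polystability corresponds to complete reducibility and the splitting into stable summands matches the splitting into irreducible constituents. I expect the main obstacle to be the two analytic existence results---the solution of Hitchin's equation for (poly)stable Higgs bundles and the existence of a harmonic metric for reductive flat connections---since each requires nontrivial global analysis (energy and functional estimates on the noncompact universal cover, and convergence of the flow), whereas the algebraic reduction and the stability--irreducibility dictionary are comparatively formal.
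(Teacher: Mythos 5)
The paper does not prove Theorem~\ref{thm:NAHT} at all: it is quoted as the classical Non-abelian Hodge Theorem with references to Donaldson, Hitchin, Corlette and Simpson, and your outline (the algebraic reduction of flatness for all $\zeta$ to $\delbar_E\phi=0$ plus Hitchin's equation, the Hitchin--Simpson existence theorem for the forward direction, the Corlette--Donaldson harmonic-metric theorem for the converse, and the stability/irreducibility dictionary) is precisely the standard argument contained in those references. Your proposal is therefore correct and matches the approach the paper implicitly relies on.
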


Of course, $\MH$ and $\MdR$ are only diffeomorphic but not biholomorphic: While the former has compact, complex submanifolds, the latter is Stein. Nonetheless, both are K\"ahler with Riemannian metric $g$ and complex structure $I$ resp.\ $J$, and the product $K = I J = - JI$ forms another complex structure which completes that quadruple $(g, I, J, K)$ that turns $\mathcal{M}$ into a Hyperk\"ahler manifold. In particular, there is a $\CP^1$ of complex structures $I_\zeta$ which is normalized so that $I_0 = I$ and $I_1 = J$. We will mostly be concerned with complex structures $I$ and $J$ and will continue denoting the corresponding K\"ahler manifolds by $\MH$ and $\MdR$.

As with any Hyperk\"ahler manifold, it can be convenient to consider its twistor space. It will suffice for us to define the twistor space over $\C$ (rather than all of $\CP^1$), which is known as the moduli space of Deligne's $\lambda$-connections (\cite{DeligneLetter},\cite{Sim97}):

\begin{definition}
A $\lambda$-connection on $E$ is a triple $(\lambda, \delbar_E , \nabla_\lambda)$ where $\lambda \in \C$, $\delbar_E$ is a holomorphic structure on $E$ and $\nabla_\lambda : \Omega^0 (E) \rightarrow \Omega^{0,1}(E)$ is a differential operator subject to
\begin{enumerate}[label=(\roman*)]
    \item a $\lambda$-twisted Leibniz rule: for all $f \in C^\infty (C)$ and $s \in \Omega^0 (E)$ \begin{equation}
        \nabla_\lambda (fs) = \lambda \partial f \otimes s + f \nabla_\lambda(s),
    \end{equation} 
    \item a holomorphicity condition $[ \delbar_E , \nabla_\lambda ] = 0$.
\end{enumerate}
\end{definition}
When $\lambda = 0$ then $\nabla_0$ becomes $C^\infty$-linear, i.e.\ a holomorphic Higgs field. If $\lambda \neq 0$, then $\nabla := \lambda^{-1} \nabla_\lambda$ is a holomorphic connection on $(E, \delbar_E)$, or, equivalently, $D = \delbar_E + \nabla$ is a flat connection on $E$. Equivalence classes $[( \lambda, \delbar_E , \nabla_\lambda )]$ form the \textit{Hodge moduli space} $\MHod$ of (polystable) $\lambda$-connections. It comes equipped with a tautological map
\begin{equation}
    \pi: \MHod \rightarrow \C , \hspace{1cm} [( \lambda, \delbar_E, \nabla_\lambda )] \mapsto \lambda .
\end{equation}
As we just described, $\pi^{-1}(0) \simeq \MH$ and $\pi^{-1}(\lambda) \simeq \MdR$ when $\lambda \neq 0$. The family~(\ref{eq:RealTwistorLine}) is a \textit{real twistor line}: A holomorphic section of $\pi$ that obeys the reality condition
\begin{equation}
\overline{\nabla_{-\bar{\zeta}^{-1}}} = g^{-1} \nabla_\zeta g =: \nabla_\zeta . g, 
\end{equation}
where $g$ is a gauge transformation acting on connections in the usual way.

\subsection{The $\C^\times$-action}
The Hodge moduli space $\MHod$ comes equipped with a natural $\C^\times$-action 
\begin{equation} \label{eq:CstarAction}
    \xi \cdot [(\lambda, \bar{\partial}_E , \nabla_\lambda )] := [(\xi \lambda, \bar{\partial}_E, \xi \nabla_\lambda )], \hspace{1cm} \xi \in \C^\times.
\end{equation}
In particular, $\pi$ is $\C^\times$-equivariant:  %since multiplication by $\xi$ turns a $\lambda$-connection into a $(\xi \lambda)$-connection:
\begin{equation}
    \pi \left( \xi \cdot [(\lambda, \bar{\partial}_E , \nabla_\lambda )] \right) = \xi \cdot \lambda = \xi \cdot \pi [(\lambda, \bar{\partial}_E , \nabla_\lambda )]. 
\end{equation}

This $\C^\times$-action restricts to the fiber $\pi^{-1}(0)$ of Higgs bundles, where it coincides with the usual action of rescaling the Higgs field. A fixed point of the $\C^\times$-action must necessarily have $\lambda=0$ and is therefore a Higgs bundle. Such a fixed point is called a \textit{variation of Hodge structure} (VHS) and must lie in the fiber $h^{-1}(0) =: N \subset \MH$ called the \textit{nilpotent cone}. The $\xi \to 0$ limit of any Higgs bundle exists by properness of the Hitchin map and is a VHS. To illustrate that the Higgs field of a fixed point can be non-zero, let $K_C^{1/2}$ denote a choice of square root of the canonical bundle and define the following Higgs bundle:
\begin{equation} \label{eq:UniformizationPoint}
    \mathcal{E}_n = K_C^{\frac{1-n}{2}} \oplus K_C^{\frac{3-n}{2}} \oplus \dots \oplus K_C^{\frac{n-1}{2}}, \hspace{1cm} \phi_n = \begin{pmatrix} 0 & 1 & & \\  & 0 & \ddots & \\ & & \ddots & 1 \\ & & & 0 \end{pmatrix}.
\end{equation}
This Higgs bundle represents the \textit{uniformization point} in $\MH (C, n)$, and it is easy to see that
\begin{equation}
    (\mathcal{E}_n , \xi \phi_n ) = (\mathcal{E}, g^{-1}(\xi) \phi_n g(\xi)) =: (\mathcal{E}, \phi_n) . g(\xi)
\end{equation}
where
\begin{equation}
    g(\xi) = \mathrm{diag} \left( \xi^{\frac{1-n}{2}}, \xi^{\frac{3-n}{2}}, \dots, \xi^{\frac{n-1}{2}} \right).
\end{equation}
In particular, $[(\mathcal{E}_n, \xi \phi_n )] = [(\mathcal{E}_n, \phi_n)]$, so it is a fixed point of the $\C^\times$-action. More generally, by a Higgs bundle version of the Rees construction one can show that a fixed point admits a grading, meaning that it is of the following form~\cite{simpson92}:

\begin{prop} \label{prop:FixedPoints}
A Higgs bundle $[(\mathcal{E}, \phi)] \in \MH^{ps}$ is fixed under the $\C^\times$-action if and only if there is a decomposition $\mathcal{E} = \mathcal{V}_1 \oplus \dots \oplus \mathcal{V}_k$ into holomorphic subbundles with respect to which
\begin{equation}
    \phi = \begin{pmatrix} 0 & \phi_{1,2} & & \\  & 0 & \ddots & \\ & & \ddots & \phi_{k-1,k} \\ & & & 0 \end{pmatrix},
\end{equation}
where $\phi_{i,i+1} : \mathcal{V}_{i+1} \to \mathcal{V}_{i} \otimes K_C$ is holomorphic.
\end{prop}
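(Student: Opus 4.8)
The plan is to prove both implications, with the easy direction being direct and the hard direction relying on the harmonic metric supplied by Theorem~\ref{thm:NAHT}.

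\emph{Easy direction.} Suppose $\mathcal{E} = \mathcal{V}_1 \oplus \dots \oplus \mathcal{V}_k$ with $\phi$ in the stated strictly block-superdiagonal form. For $\xi \in \C^\times$ define the holomorphic automorphism $g(\xi)$ acting on $\mathcal{V}_i$ as multiplication by $\xi^{-i}$. For a local section $s$ of $\mathcal{V}_{i+1}$ one computes $g(\xi)\,\phi\,g(\xi)^{-1}(s) = \xi^{-i}\xi^{i+1}\phi_{i,i+1}(s) = \xi\,\phi_{i,i+1}(s)$, so $g(\xi)\,\phi\,g(\xi)^{-1} = \xi\phi$ globally. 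Hence $(\mathcal{E}, \xi\phi) \cong (\mathcal{E},\phi)$ for every $\xi$, i.e.\ $[(\mathcal{E},\phi)]$ is a $\C^\times$-fixed point. (One shifts the weights by an overall constant to arrange $\det g(\xi)=1$, which does not affect the conjugation.)

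\emph{Hard direction.} Assume $[(\mathcal{E},\phi)]$ is fixed. Then for each $\xi \in \C^\times$ there is a holomorphic automorphism $f_\xi$ of $(E,\delbar_E)$ with $f_\xi\,\phi\,f_\xi^{-1} = \xi\phi$. Fix a harmonic metric $h$ by Theorem~\ref{thm:NAHT}. I would first restrict to $\xi \in U(1)$: there the adjoint transforms as $(\xi\phi)^{\dagger_h} = \bar\xi\,\phi^{\dagger_h}$, so $[\xi\phi,(\xi\phi)^{\dagger_h}] = |\xi|^2[\phi,\phi^{\dagger_h}] = [\phi,\phi^{\dagger_h}]$, whence $h$ is simultaneously a harmonic metric for $(\mathcal{E},\xi\phi)$. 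Transporting $h$ through the isomorphism $f_\xi:(\mathcal{E},\phi)\to(\mathcal{E},\xi\phi)$ produces a second harmonic metric for $(\mathcal{E},\xi\phi)$, so uniqueness of the harmonic metric (up to a positive scalar on each stable summand) forces $f_\xi$ to be $h$-unitary after rescaling.

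\emph{Producing the grading.} Writing $\xi = e^{\sqrt{-1}t}$ and differentiating $f_{e^{\sqrt{-1}t}}$ at $t=0$ yields $\eta \in \End(E)$ that is holomorphic ($\delbar_E\eta = 0$) and skew-Hermitian with respect to $h$; differentiating $f_\xi\phi f_\xi^{-1} = \xi\phi$ gives $[\eta,\phi] = \sqrt{-1}\,\phi$. Being skew-Hermitian, $\eta$ is fiberwise diagonalizable with eigenvalues $\sqrt{-1}\,a$, the $a$ differing by integers, and the eigenbundles $\mathcal{V}(a):=\ker(\eta-\sqrt{-1}a)$ are holomorphic, $h$-orthogonal subbundles with $\mathcal{E} = \bigoplus_a \mathcal{V}(a)$. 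The relation $[\eta,\phi]=\sqrt{-1}\phi$ shows $\eta(\phi s) = \sqrt{-1}(a+1)\phi s$ for $s$ of weight $a$, so $\phi$ shifts the weight by exactly one unit; ordering the eigenbundles $\mathcal{V}_i$ by decreasing weight turns this into $\phi(\mathcal{V}_{i+1})\subset \mathcal{V}_i\otimes K_C$, which is precisely the asserted form.

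\emph{Main obstacle.} The delicate point is upgrading the mere existence of each $f_\xi$ into an honest differentiable one-parameter family, so that the generator $\eta$ is well defined. In the stable case this follows from rigidity: automorphisms of a stable Higgs bundle are scalars, so the normalized unitary $f_\xi$ form a projective and hence genuine representation of $U(1)$, whose weights are integers up to a common shift. The polystable case must be reduced to the stable summands of equal slope, where one has to verify that the circle action permutes isomorphic summands and acts by the above mechanism within each isotypic block; this bookkeeping, together with the integrality of the weights, is the step requiring the most care. Alternatively one can bypass the metric and read off the weight filtration directly from the $\C^\times$-equivariant Rees-module structure at the fixed point, as in~\cite{simpson92}.
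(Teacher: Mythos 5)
Your argument is essentially correct, but it is worth pointing out that the paper does not prove this proposition at all: it simply invokes ``a Higgs bundle version of the Rees construction'' and cites Simpson. What you have written out is the other standard proof --- Simpson's original differential-geometric one --- which leverages the harmonic metric from Theorem~\ref{thm:NAHT} to replace the abstract family of intertwiners $f_\xi$ by a \emph{unitary} circle action and then reads the grading off the eigenbundles of its infinitesimal generator $\eta$. The trade-off is roughly this: the Rees-construction route is purely algebraic, works uniformly for polystable objects and for $\lambda$-connections, and avoids the metric entirely; your route is self-contained given the NAHT (which the paper already assumes) and produces the grading together with the useful extra fact that the $\mathcal{V}_i$ are $h$-orthogonal, which is exactly the form in which the paper uses the decomposition later (e.g.\ in Proposition~\ref{prop:SL2TwistorLine}). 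Two small points deserve attention. First, the passage from ``$f_\xi$ exists for each $\xi$'' to a differentiable one-parameter family is, as you say, the real content: the clean way to finish it is to note that $G=\{(\xi,f): f\phi f^{-1}=\xi\phi,\ f\ h\text{-unitary}\}$ is a compact group, a central extension of $U(1)$ by the scalars $U(1)$ (using stability), whose identity component is a torus surjecting onto $U(1)$, and such a surjection of tori always admits a homomorphic section; this yields the honest action whose generator is your $\eta$. Second, your ordering of the eigenbundles by decreasing weight does give exactly the superdiagonal shape claimed (not merely block upper-triangular): since the weights are real numbers differing by integers and $\phi$ raises the weight by exactly one unit, $\phi(\mathcal{V}_{i+1})$ lands either in $\mathcal{V}_i\otimes K_C$ or is zero, and both are allowed by the statement because the $\phi_{i,i+1}$ are not required to be nonzero. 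The polystable case is indeed only sketched in your write-up; there the metric argument gets fiddly and the Rees construction the paper cites is the more efficient tool.
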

\begin{rmk}
The Higgs bundle $(\mathcal{E}_n, \phi_n)$ sits at one extreme in the set of fixed points, namely where $k=n$. On the other end, there is the case $k=1$ where a fixed point is a pair $(\mathcal{E},0)$ and $\mathcal{E}$ is a polystable vector bundle (of rank $n$ and degree $0$).
\end{rmk}

Simpson (\cite{Sim97}, \cite{Simpson10}) has shown that the $\xi \to 0$ limit in~(\ref{eq:CstarAction}) exists (and is a VHS) for any $\lambda$-connection, and that this action defines a Bia{\l}ynicki-Birula stratification of $\MHod$. To this end, let $\mathcal{VHS} := (\MHod)^{\C^\times} = \coprod V_\alpha$ be the decomposition of the fixed locus into connected components, and let
\begin{equation}
    W_\alpha := \{ [(\lambda, \delbar_E, \nabla_\lambda )] \in \MHod : \lim_{\xi \to 0} \xi \cdot [(\lambda, \delbar_E, \nabla_\lambda )] \in V_\alpha \}.
\end{equation}
Then $\MHod = \coprod W_\alpha$ is the BB-stratification of $\MHod$, and the restrictions
\begin{align}
    W^0_\alpha := W_\alpha \cap \pi^{-1}(0), W^1_\alpha := W_\alpha \cap \pi^{-1}(1) 
\end{align}
define the \textit{Morse stratification} of $\MH$ and the \textit{partial oper stratification} of $\MdR$.

\subsection{The conformal limit} \label{sec:CL}

We will be interested in the limit of the family $\nabla_{\zeta, R} (\bar{\partial}_E , \phi)$ from (\ref{eq:RealTwistorLine}) as $\zeta$ and $R$ simultaneously approach $0$. Gaiotto~\cite{GaiottoOpersAndTBA} suggested leaving their ratio $\hbar = \zeta R^{-1}$ fixed, yielding a family of connections
\begin{equation}
    \nabla_{\hbar, R} (\bar{\partial}_E , \phi) = \hbar^{-1} \phi + D + \hbar R^2 \phi^{\dagger}, 
\end{equation}
whose $R \to 0$ limit, if it exists, is called the \textit{$\hbar$-conformal limit} of the Higgs bundle $(\bar{\partial}_E, \phi)$. Collier and Wentworth have shown that the $\hbar$-conformal limit of a Higgs bundle $[(\bar{\partial}_E, \phi)] \in \MH^{ps}$ exists if $\lim_{\xi \to 0} [(\bar{\partial}_E, \xi \phi )]$ is a \textit{stable} Higgs bundle~\cite{collier2019conformal}. In this sense, the $\hbar$-conformal limit can be viewed as a map $\mathcal{CL}_\hbar$ from a dense open set in $\MH^{ps}$ to $\MdR$, but as opposed to the NAHT map it is not continuous. Indeed, the orbit of a nilpotent Higgs bundle under the $\mathbb{C}^\times$-action completes to a holomorphically embedded $\mathbb{CP}^1$ inside of the nilpotent cone by properness of the Hitchin map. On the other hand, the image of the $\mathbb{C}^\times$-orbit in $\MdR$ cannot complete to a $\CP^1$ because $\MdR$ is Stein.

Remarkably though, $\mathcal{CL}_\hbar$ still has very nice features, one of which is the following:  Let $(\delbar_0 ,\phi_0) \in V_\alpha$ be a $\C^\times$-fixed point and define for $i \in \{0,1\}$
\begin{equation}
    W^i _\alpha (\delbar_0, \phi_0) := \{ [(\lambda, \delbar_E, \nabla_\lambda )] \in W^i _\alpha : \lim_{\xi \to 0} \xi \cdot [(\lambda, \delbar_E, \nabla_\lambda )] = [(0, \delbar_0, \phi_0)] \}.
\end{equation}
These are holomorphic subspaces of $\MH$ resp.\ $\MdR$, and the conformal limit identifies them biholomorphically~(\cite{collier2019conformal},\cite{DFKMMN}). This has one particular implication: For any $(\delbar_0, \phi_0) \in V_\alpha$, $W^0_\alpha (\delbar_0 , \phi_0)$ is biholomorphically equivalent to an affine space by BB-theory~\cite{BBtheory} (this is the algebro-geometric analog of the fact in Morse theory that the ascending manifold of a point has the topological type of a disk), and through $\CL_\hbar$ this implies that $W^1_\alpha (\delbar_0 , \phi_0)$ is biholomorphic to an affine space $\mathbb{A}^m$. Simpson~\cite{Simpson10} conjectured that the leaves $W^1_\alpha (\delbar_0, \phi_0)$ are closed inside of $\MdR$ and provided a heuristic argument: If a leaf was not closed, it could lead to a projective curve inside of $\MdR$ which is impossible because $\MdR$ is Stein\footnote{It is worth mentioning that an affine variety $X$ can contain a subvariety $Y$ that is isomorphic to $\mathbb{A}^n$ and such that $Y$ is not closed in $X$. For example, the diagonal inside of $\mathbb{CP}^1 \times \mathbb{CP}^1$ is ample and so its complement $X$ is an affine variety. Restricting to the locus where the first coordinate is nonzero gives a subvariety $Y$ that is clearly isomorphic to $\mathbb{A}^2$ and not closed in $X$.}. Note that leaves $W^0_\alpha (\delbar_0, \phi_0) \subset \MH$ are not closed when they contain non-zero nilpotent Higgs bundles that are not themselves fixed points.

%By a standard fact in algebraic geometry, an affine space inside of an affine variety\footnote{Note that $\MdR$ is an affine analytic variety, though not an affine algebraic variety.} is closed: Otherwise one could take a point in its boundary to construct a holomorphically embedded $\CP^1$, which is impossible for affine varieties. \textcolor{red}{Fix this!}

We shall be interested in the conformal limit of $\mathbb{C}^\times$-orbits of Higgs bundles, so let us elaborate a little. The conformal limit can be written out rather explicitly. Let $D$ be the Chern connection for the Higgs bundle $(\bar{\partial}_E, \phi)$, i.e.\ $D = \bar{\partial}_E + \partial_{E}$ when decomposed into types. Let $[(\bar{\partial}_0, \phi_0)] = \lim_{\xi \to 0} \xi \cdot [(\bar{\partial}_E, \phi )]$ be the corresponding fixed point with harmonic metric $h_0$ and Chern connection $D_0 = \bar{\partial}_0 + \partial_0$. Then the $\hbar$-conformal limit of $(\bar{\partial}_E, \phi)$ is given by~\cite{collier2019conformal}:
\begin{equation} \label{eq:CLCxn}
    \lim_{R \to 0} \nabla_{\hbar, R} (\bar{\partial}_E , \phi) = \hbar^{-1} \phi + \bar{\partial}_E + \partial_0 + \hbar \phi_0^{\dagger_{h_0}} =: \CL_\hbar (\bar{\partial}_E, \phi).
\end{equation}

Of course, the parameter $R$ in the family $\nabla (\hbar, R)$ originates from the usual $\C ^\times$-action on the moduli space of Higgs bundles. The following explains the interplay between this action and the parameter $\hbar$ controlling the family of conformal limits:

\begin{prop} \label{prop:ActionOnCLFamily}
Let $r \in \mathbb{R}_{>0}$. Then
\begin{equation}
    \CL_\hbar (r \cdot (\bar{\partial}_E, \phi)) = \CL_{\hbar / r} (\bar{\partial}_E, \phi).
\end{equation}
\end{prop}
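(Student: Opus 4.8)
The plan is to establish the identity at the level of the full families $\nabla_{\hbar,R}$ appearing in~\eqref{eq:CLCxn}, \emph{before} passing to the $R\to 0$ limit; this is cleaner than substituting into the explicit formula~\eqref{eq:CLCxn}, because it avoids having to track how the fixed-point representative $(\bar\partial_0,\phi_0)$ and its adjoint $\phi_0^{\dagger_{h_0}}$ depend on $r$. Write $h_{\psi,R}$ for the harmonic metric with parameter $R$ attached to $(\bar\partial_E,\psi)$ by Theorem~\ref{thm:NAHT}, and let $D_{\psi,R}$ be the Chern connection of $(\bar\partial_E,h_{\psi,R})$, so that by definition
\begin{equation}
\nabla_{\hbar,R}(\bar\partial_E,\psi)=\hbar^{-1}\psi+D_{\psi,R}+\hbar R^2\,\psi^{\dagger_{h_{\psi,R}}}.
\end{equation}

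The key input is a scaling relation for the harmonic metric. Since $r\in\R_{>0}$ is \emph{real}, the adjoint only picks up the same scalar, $(r\phi)^{\dagger_h}=r\,\phi^{\dagger_h}$, so that $[\,r\phi,(r\phi)^{\dagger_h}\,]=r^2[\,\phi,\phi^{\dagger_h}\,]$. Hence Hitchin's equation~\eqref{eq:HitchinEqn} with parameter $R$ for $(\bar\partial_E,r\phi)$, namely $F_{D_h}+R^2[\,r\phi,(r\phi)^{\dagger_h}\,]=0$, coincides verbatim with Hitchin's equation with parameter $rR$ for $(\bar\partial_E,\phi)$. By the uniqueness clause of Theorem~\ref{thm:NAHT} I conclude
\begin{equation}
h_{r\phi,R}=h_{\phi,rR},\qquad\text{and therefore}\qquad D_{r\phi,R}=D_{\phi,rR}.
\end{equation}

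Substituting this into the family and again using $(r\phi)^{\dagger}=r\,\phi^{\dagger}$, a one-line computation yields the exact identity of connections, valid for \emph{every} $R>0$,
\begin{equation}
\nabla_{\hbar,R}(\bar\partial_E,r\phi)=\nabla_{\hbar/r,\,rR}(\bar\partial_E,\phi),
\end{equation}
which I would verify term by term: the Higgs terms are both $r\hbar^{-1}\phi$, the Chern connections agree by the displayed scaling relation, and the anti-holomorphic terms are both $\hbar r R^2\,\phi^{\dagger_{h_{\phi,rR}}}$ (on the left from $\hbar R^2(r\phi)^{\dagger}$, on the right from $(\hbar/r)(rR)^2\phi^{\dagger}$). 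Finally I let $R\to 0$; since $r$ is a fixed positive constant, $rR\to 0$ as well, so the right-hand side tends to $\lim_{\tilde R\to 0}\nabla_{\hbar/r,\tilde R}(\bar\partial_E,\phi)=\CL_{\hbar/r}(\bar\partial_E,\phi)$, while the left-hand side is by definition $\CL_\hbar(r\cdot(\bar\partial_E,\phi))$. The two limits exist simultaneously because they are literally the same family reparametrized, so no extra hypothesis on existence is needed.

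The only genuinely delicate point — and the reason the statement is restricted to $r\in\R_{>0}$ — is the harmonic-metric scaling relation: the clean cancellation $[\,r\phi,(r\phi)^{\dagger}\,]=r^2[\,\phi,\phi^{\dagger}\,]$ and the matching of the $\hbar R^2(r\phi)^{\dagger}$ term both fail for complex $r$, where $(r\phi)^{\dagger}=\bar r\,\phi^{\dagger}$ would inject $\bar r$ into the family and break the reparametrization $(\hbar,R)\mapsto(\hbar/r,rR)$. I expect the main pitfall to be not a real obstacle but the temptation to plug $r\phi$ directly into~\eqref{eq:CLCxn}: there the factor of $r$ is concealed inside the limiting adjoint $\lim_{R\to0}R^2\phi^{\dagger}$ and is easy to miscount, whereas working with $\nabla_{\hbar,R}$ prior to the limit sidesteps the issue entirely.
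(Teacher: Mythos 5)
Your proof is correct and follows essentially the same route as the paper: both arguments rest on the identity $\nabla_{\hbar,R}(\bar\partial_E,r\phi)=\nabla_{\hbar/r,rR}(\bar\partial_E,\phi)$ followed by passing to the $R\to 0$ limit. The only difference is that you explicitly justify the harmonic-metric scaling $h_{r\phi,R}=h_{\phi,rR}$ via Hitchin's equation and uniqueness, a step the paper's proof uses silently in the notation $h(rR)$ — a worthwhile clarification, but not a different approach.
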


\begin{proof}
Let us denote the harmonic metric with parameter $R$ by $h(R)$ and fix $r$. Then
\begin{align}
    \CL_\hbar (r \cdot (\bar{\partial}_E, \phi)) &= \lim_{R \to 0} \nabla_{\hbar, R} (r \cdot (\bar{\partial}_E , \phi)) \\
    &= \lim_{R \to 0} \hbar^{-1} r \phi + D + \hbar r R^2 \phi^{\dagger_{h(rR)}} \\
    &= \lim_{rR \to 0} (\hbar/r)^{-1} \phi + D + (\hbar/r) (rR)^2 \phi^{\dagger_{h(rR)}} \\
    &= \lim_{rR \to 0} \nabla_{\hbar/r,rR} (\bar{\partial}_E, \phi) = \CL_{\hbar / r} (\bar{\partial}_E, \phi).
\end{align}  
\hfill \end{proof}

\subsection{Very stable Higgs bundles}
In these notes, we will be interested in nilpotent Higgs bundles, particularly those that are not fixed by the $\C^\times$-action. We collect here a few of the results about their existence and structure.

Drinfeld~\cite{DrinfeldLetter} and Laumon~\cite{LaumonNilpotent} introduced the notion of a \textit{very stable vector bundle}, which is a holomorphic vector bundle on a connected, compact Riemann surface that admits no non-zero nilpotent Higgs fields. For a surface of genus $g>1$ these vector bundles are necessarily stable in the sense discussed previously, and they form a non-empty open subset in the moduli space of (stable) vector bundles~\cite{LaumonNilpotent}. Pauly and Pe\'on-Nieto \cite{PPN} proved that a stable vector bundle $[(\delbar_0, 0)]$ is very stable if and only if the leaf $W^0_\alpha (\delbar_0, 0)$ is closed inside $\MH$ or, equivalently, if the restriction of the Hitchin map to the leaf is proper. A stable bundle that is not very stable is called \textit{wobbly}.

Hausel and Hitchin~\cite{HauselHitchin} have recently generalized these definitions to the other components $V_\alpha$ of the fixed locus inside the nilpotent cone $N$.

\begin{deff}
A stable Higgs bundle $(\delbar_0, \phi_0) \in V_\alpha$ is called very stable if 
\begin{equation}
    W^0_\alpha (\delbar_0, \phi_0) \cap N = \{[(\delbar_0, \phi_0)]\}.
\end{equation}
\end{deff}

The theorem of Pauly and Pe\'on-Nieto generalizes in the sense that a stable Higgs bundle $(\delbar_0, \phi_0)$ is very stable if and only if $W^0_\alpha (\delbar_0, \phi_0) \subset \MH$ is closed. As in the case of vector bundles, a stable Higgs bundle is called \textit{wobbly} if it is not very stable.

Wobbly Higgs bundles in $\MH (C, n)$ exist for any choice of $(C,n)$ because wobbly vector bundles do, though it is an open question which components $V_\alpha$ admit wobbly Higgs bundles. Some partial progress has been made, e.g.\ for fixed points $(\delbar_0, \phi_0)$ whose underlying vector bundle is a direct sum of line bundles $L_1 \oplus \dots \oplus L_n$. In this case, $\phi_0^{n-1}$ has a single non-trivial component $b \in H^0 (C, L_1^\ast \otimes L_n \otimes K_C^{n-1})$ and $(\delbar_0, \phi_0)$ is very stable if and only if $b$ has no repeated zeroes~\cite[Theorem 1.2]{HauselHitchin}. For example, the uniformizing Higgs bundle $(\mathcal{E}_n, \phi_n)$ is evidently very stable, and because it constitutes its own connected component $V_\alpha$ this component does not contain wobbly points.

On the other hand, one might have naively expected that all components $V_\alpha$ contain very stable Higgs bundle and that they would form a dense open subset. While this is true in the case of $\SL (2, \C)$, there is a component for the $\SL (3, \C)$ moduli space for which all Higgs bundles are wobbly~\cite{Gothen}, \cite[Remark 5.12]{HauselHitchin}.

\subsection{A parametrization of the stable manifold}
Collier and Wentworth \cite{collier2019conformal} give an explicit parametrization of the leaves $W^0_\alpha (\delbar_0, \phi_0)$ which we want to quickly recall here.

First, note that any nilpotent Higgs bundle $(\delbar_E , \phi)$ induces a filtration
\begin{equation}
    0 \subset \ker \phi \subset \ker (\phi^2) \subset \dots \subset \ker (\phi^n) = \mathcal{E},
\end{equation}
and the fixed points $(\delbar_0 , \phi_0) \in V_\alpha$ are precisely those for which there is a compatible holomorphic grading as in Proposition~\ref{prop:FixedPoints}, in the sense that $\ker(\phi^j) = \mathcal{V}_1 \oplus \dots \oplus \mathcal{V}_{j}$. Such a splitting induces a $\Z$-grading on the vector bundle $\End \mathcal{E}$ via
\begin{equation}
    \End_j \mathcal{E} := \bigoplus_{i-l=j} \Hom (\mathcal{V}_i, \mathcal{V}_l),
\end{equation}
where we set $\End_j \mathcal{E} = 0$ if $|j| > k-1$, and we define
\begin{equation}
    N_+ := \bigoplus_{j > 0} \End_j \mathcal{E}, \hspace{0.5cm} N_- := \bigoplus_{j < 0} \End_j \mathcal{E}, \hspace{0.5cm} L := \End_0 \mathcal{E} \cap \mathfrak{sl}(E).
\end{equation}

Next, recall that for $(\delbar_0, \phi_0) \in V_\alpha$ with harmonic metric $h$ the connection $D = \phi_0 + \partial^h_0 + \delbar_0 + \phi_0^{\dagger_h}$ is flat, where $\partial_0^h + \delbar_0$ is the Chern connection associated to the pair $(\delbar_0, h)$. Let us define
\begin{equation}
    D' = \partial_0^h + \phi_0^{\dagger_h}, \hspace{0.5cm} D'' = \delbar_0 + \phi_0 ;
\end{equation}
then $D = D' + D''$. Define furthermore the \textit{BB-slice} through $(\delbar_0, \phi_0)$ as
\begin{equation}
    \mathcal{S}^+_{(\delbar_0, \phi_0)} := \{ (\beta, \phi_1) \in \Omega^{0,1}(N_+) \oplus \Omega^{1,0}( L \oplus N_+) | D''(\beta, \phi_1) + [\beta, \phi_1] = 0, D'(\beta, \phi_1) = 0   \}.
\end{equation}
Then the following proposition (Corollary 4.3 in~\cite{collier2019conformal}) gives a parametrization of the stable manifold $W^0_\alpha (\delbar_0, \phi_0)$ which generalizes the familiar one for the Hitchin section, the stable manifold of the uniformization point $(\mathcal{E}_n, \phi_n)$:
\begin{prop}\label{prop:ParametrizationStableMfld}
Let $(\delbar_0, \phi_0) \in V_\alpha$ be a stable VHS. Then the map
\begin{equation}
    p_H : \mathcal{S}^+_{(\delbar_0, \phi_0)} \rightarrow W^0_\alpha (\delbar_0, \phi_0), \hspace{0.5cm} (\beta, \phi_1) \mapsto \left[ (\delbar_0 + \beta, \phi_0 + \phi_1) \right]
\end{equation}
is a biholomorphism.
\end{prop}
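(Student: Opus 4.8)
The plan is to show that $p_H$ is a well-defined holomorphic bijection onto $W^0_\alpha(\delbar_0,\phi_0)$ with holomorphic inverse, treating in turn: that the image lands in the stable manifold, injectivity, surjectivity, and regularity. \emph{Image lies in the stratum.} For $(\beta,\phi_1)\in\mathcal{S}^+_{(\delbar_0,\phi_0)}$ the first slice equation $D''(\beta,\phi_1)+[\beta,\phi_1]=0$ is precisely the integrability (Maurer--Cartan) condition ensuring that $(\delbar_0+\beta,\phi_0+\phi_1)$ is a genuine Higgs bundle. To see that it lies in $W^0_\alpha(\delbar_0,\phi_0)$ I would compute the downward flow explicitly. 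Let $g(\xi)=\diag(\xi^{w_1},\dots,\xi^{w_k})$ be the one-parameter subgroup adapted to the grading of Proposition~\ref{prop:FixedPoints}, with weights chosen so that conjugation rescales $\End_j\mathcal{E}$ by $\xi^{j}$; this is possible exactly because $(\delbar_0,\phi_0)$ is a VHS, and it realizes $g(\xi)^{-1}\phi_0\,g(\xi)=\xi\phi_0$ while fixing $\delbar_0$. Applying $g(\xi)$ to $\xi\cdot(\delbar_0+\beta,\phi_0+\phi_1)=(\delbar_0+\beta,\xi\phi_0+\xi\phi_1)$ rescales the $\End_j$-component of $\beta$ by $\xi^{j}$ and that of $\phi_1$ by $\xi^{\,j+1}$. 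Since $\beta\in\Omega^{0,1}(N_+)$ involves only $j>0$ and $\phi_1\in\Omega^{1,0}(L\oplus N_+)$ only $j\ge 0$, every exponent is strictly positive, so the conjugated family converges to $(\delbar_0,\phi_0)$ as $\xi\to 0$. This is the conceptual reason the slice is modelled on $N_+$ and $L\oplus N_+$: positivity of the weights is precisely what forces the prescribed limit. Stability of $(\delbar_0+\beta,\phi_0+\phi_1)$ then follows since stability is open and the limit point is stable.

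\emph{Injectivity.} If $(\beta,\phi_1)$ and $(\beta',\phi_1')$ have the same image, there is a complex gauge transformation $g$ with $g\cdot(\delbar_0+\beta,\phi_0+\phi_1)=(\delbar_0+\beta',\phi_0+\phi_1')$; that is, $g$ is an isomorphism of two stable Higgs bundles. Stable Higgs bundles are simple, so any such isomorphism is a scalar $g=c\,\id$, and scalars are central and act trivially by conjugation. Hence $(\delbar_0+\beta,\phi_0+\phi_1)=(\delbar_0+\beta',\phi_0+\phi_1')$ on the nose, i.e.\ $(\beta,\phi_1)=(\beta',\phi_1')$.

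\emph{Surjectivity.} This I expect to be the main obstacle. Given $[(\delbar_E,\phi)]\in W^0_\alpha(\delbar_0,\phi_0)$, its downward flow converges to the fixed point, which equips $(\delbar_E,\phi)$ with a weight filtration whose associated graded is $(\delbar_0,\phi_0)$. Choosing a smooth splitting compatible with the grading and applying a complex gauge transformation in the parabolic subgroup preserving the filtration, I can bring $(\delbar_E,\phi)$ into the form $(\delbar_0+\beta,\phi_0+\phi_1)$ with $\beta\in\Omega^{0,1}(N_+)$ and $\phi_1\in\Omega^{1,0}(L\oplus N_+)$; the first slice equation then holds automatically because the result is a Higgs bundle. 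The remaining, and most delicate, point is to arrange the gauge-fixing condition $D'(\beta,\phi_1)=0$ by a further transformation in the same parabolic subgroup. This is a Coulomb-type condition relative to the harmonic metric of the VHS, and the natural route is to identify $\mathcal{S}^+_{(\delbar_0,\phi_0)}$ with the strictly-positive-weight part of the deformation hypercohomology $\mathbb{H}^1$ at $(\delbar_0,\phi_0)$ and invoke the Bia{\l}ynicki--Birula theorem together with Simpson's description of the strata $W^0_\alpha$, which guarantees that the stable manifold is an affine space modelled on exactly this subspace and that the tautological parametrization exhausts it. The real analytic content is the surjectivity of the linearized operator $D'$ on the relevant graded pieces, handled by an elliptic estimate and the implicit function theorem.

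\emph{Regularity.} Finally, $p_H$ is visibly holomorphic, being the affine-linear map $(\beta,\phi_1)\mapsto(\delbar_0+\beta,\phi_0+\phi_1)$ composed with the holomorphic quotient to $\MH$. Because the slice is cut out by the holomorphic integrability equation and the transverse gauge-fixing condition $D'=0$, it is a smooth complex submanifold and $p_H$ is a local biholomorphism; combined with bijectivity this yields the asserted biholomorphism. The role of $D'=0$ is exactly to remove the gauge redundancy so that $\mathcal{S}^+_{(\delbar_0,\phi_0)}$ has the correct dimension and meets each orbit of the stratum transversally.
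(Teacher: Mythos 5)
First, a framing remark: the paper does not actually prove this proposition --- it is quoted from Collier--Wentworth (Corollary 4.3 of \cite{collier2019conformal}) --- so your proposal has to stand on its own. Your flow computation showing that the image of $p_H$ lands in $W^0_\alpha(\delbar_0,\phi_0)$ is essentially correct (modulo sign conventions for the grading, it is the same calculation the paper carries out explicitly in rank $2$ in Section 3.2), and the observation that stability of the image follows from openness of stability along the flow is fine. The problems are in the other two steps.

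\emph{Injectivity.} The claim ``stable Higgs bundles are simple, so any such isomorphism is a scalar $g = c\,\id$'' is not correct. Simplicity says $\Aut(\mathcal{E},\phi) = \C^\times\cdot\id$ for a single stable Higgs bundle; it implies that an isomorphism between two isomorphic stable Higgs bundles is unique \emph{up to} scalar, not that it is itself a scalar multiple of the identity. Indeed, for any non-central gauge transformation $g$ the pair $(g\cdot\delbar_E,\, g\cdot\phi)$ is isomorphic to $(\delbar_E,\phi)$ via $g$, and nothing in your argument rules out both representatives lying in the slice. The entire purpose of the Coulomb-type condition $D'(\beta,\phi_1)=0$ is to force the slice to meet each gauge orbit at most once, and your injectivity argument never invokes it. A correct argument has to (i) use the $\xi\to 0$ flow to show that any intertwiner $g$ is compatible with the weight filtration, hence of the form $c(\id+u)$ with $u$ nilpotent of nonzero weight, and then (ii) use $D'(\beta-\beta',\phi_1-\phi_1')=0$ together with the integrability equation and the Hodge-theoretic fact $\ker D'\cap\im D''=0$ on the positive-weight pieces to conclude $u=0$. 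As written, injectivity is assumed rather than proved.

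\emph{Surjectivity.} Here the argument is essentially circular: you invoke ``the Bia{\l}ynicki--Birula theorem together with Simpson's description of the strata, which guarantees that the stable manifold is an affine space modelled on exactly this subspace and that the tautological parametrization exhausts it.'' But the assertion that \emph{this particular} tautological parametrization exhausts $W^0_\alpha(\delbar_0,\phi_0)$ is precisely the content of the proposition. What BB theory gives for free is that the stratum is an affine space of the correct dimension; that every point of it admits a representative in Coulomb gauge relative to the harmonic metric of the VHS is the analytic heart of Collier--Wentworth's proof (a Kuranishi slice argument near the fixed point, globalized along the orbit using the $\C^\times$-action), and your proposal gestures at the ingredients (elliptic estimates, implicit function theorem) without assembling them. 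The ``image lies in the stratum'' step is genuinely complete; the rest needs the slice equation to do actual work.
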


\subsection{WKB analysis} \label{subsec:WKBreview}
The WKB method can be used to study the asymptotics of flat sections for a family of flat connections
\begin{equation}
    \nabla_\epsilon = \epsilon^{-1} \phi + D + \epsilon \psi
\end{equation}
on a holomorphic vector bundle $\mathcal{E}$ over a Riemann surface $C$ which is not necessarily compact. These families may arise as the real twistor lines associated to a Higgs bundle via Non-abelian Hodge theory, or as the family defining the conformal limit. In either case, $\phi$ is a Higgs bundle, $D$ is a connection, and $\epsilon \psi$ contains the terms that grow in positive powers with $\epsilon$. We will be interested in the case where the leading term $\phi$ is nonzero and where $\psi$ is independent of $\epsilon$.

Let $\gamma : I=[0,1] \rightarrow C$ be a parametrized, closed loop in $C$, i.e.\ $\gamma(0) = \gamma(1) = p$. After trivializing $\gamma^\ast \mathcal{E}$, flatness of a section $s$ of $\gamma^\ast \mathcal{E}$ under the pullback connection $\gamma^\ast \nabla_\epsilon$ is expressed by the equation
\begin{equation} \label{eq:FlatSection}
    \left( dt \otimes \frac{d}{dt} + \epsilon^{-1} \gamma^\ast \phi + A + \epsilon \gamma^\ast \psi \right) s(t) = 0,
\end{equation}
where $t$ is the coordinate on $I$ and $A$ is a $1$-form representing the connection $\gamma^\ast D$.

Let us focus on the case of $\SL (2, \C)$ so that $\gamma^\ast \phi$ has two eigenvalues $\pm \mu (t) dt$. Everything that follows will depend only on the homology class of $\gamma$ and hence we may assume that $\gamma^\ast \phi$ is zero-free. Moreover, we will be interested in the case that $\gamma$ is a \textit{WKB curve}, i.e.\ that 
\begin{equation}
    \Re ( \mu (t)) > 0
\end{equation}
for all $t$. Define the \textit{period along $\gamma$} to be
\begin{equation} \label{eq:Period}
    Z_\gamma := \int_0^1 \mu (t) dt, 
\end{equation}
then for a WKB curve we clearly have $\Re (Z_\gamma) > 0$. Since $\pm \mu (t)$ are distinct, we can choose a splitting $\gamma^\ast \mathcal{E} = L_- \oplus L_+$ that diagonalizes $\gamma^\ast ( \phi )$, i.e.\ $\gamma^\ast (\phi) |_{L_\pm} = \pm (\mu) id_{L_\pm}$. Let us also write the connection 1-form $A$ with respect to this decomposition as
\begin{equation}
    A = A_+ + A_- + A_o,
\end{equation}
where $A_\pm$ are $1$-forms representing flat connections on $L_\pm$, and $A_o$ represents the off-diagonal part of $A$, i.e.\ it is a section of $\left( \Hom (L_+, L_-) \oplus \Hom(L_-, L_+) \right) \otimes T^\ast I$. The WKB method gives the following
\begin{prop} \label{prop:RegularWKB}
For a WKB curve $\gamma$, the holonomy of $\nabla_\epsilon$ around $\gamma$ grows exponentially in $\epsilon^{-1}$. More precisely, we have
\begin{equation}
   \lim_{\epsilon \searrow 0} \big( \Tr \Hol_\gamma (\nabla_\epsilon) \cdot \exp (-\epsilon^{-1} Z_\gamma )  \big) = \Hol_\gamma (A_+) \in \mathbb{C}^\times,
\end{equation}
where the limit is taken over $\epsilon > 0$.
\end{prop}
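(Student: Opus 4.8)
The plan is to reduce the holonomy to an explicit diagonal transport times a controllable off-diagonal correction, using the WKB condition $\Re(\mu(t))>0$ at \emph{every} $t$ as the essential input. Working in the smooth frame $\gamma^\ast\mathcal{E}=L_-\oplus L_+$ that diagonalizes $\gamma^\ast\phi$ — globally defined along $\gamma$ since $\mu$ is zero-free — the flat-section equation (\ref{eq:FlatSection}) is a linear ODE whose coefficient splits into a diagonal part collecting $\epsilon^{-1}\gamma^\ast\phi$ together with $A_+,A_-$, and an off-diagonal part $B_o$ collecting $A_o$ together with the $O(\epsilon)$ contribution of $\epsilon\,\gamma^\ast\psi$. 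First I would gauge away the diagonal part: let $\Phi_d(t)=\diag(p(t),q(t))$ be the (commuting, hence explicit) diagonal transport, so that $p(t),q(t)$ are the restrictions of the diagonal holonomy of $\nabla_\epsilon$ to $L_+,L_-$; by the WKB hypothesis $|p(t)/q(t)|=\exp\!\big(2\epsilon^{-1}\int_0^t\Re\mu\big)$ grows monotonically, and at the endpoint $p(1)=\exp(\epsilon^{-1}Z_\gamma)\,\Hol_\gamma(A_+)$, $q(1)=\exp(-\epsilon^{-1}Z_\gamma)\,\Hol_\gamma(A_-)$. Conjugating the equation by $\Phi_d$ yields an equation $u'=-\tilde B u$ whose coefficient $\tilde B=\Phi_d^{-1}B_o\Phi_d$ is off-diagonal to leading order, with entries $\tilde B_{12}=(q/p)\beta$ and $\tilde B_{21}=(p/q)\gamma$, where $\beta,\gamma$ are the entries of $B_o$.

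With $M(t)$ the fundamental solution of $u'=-\tilde Bu$ normalized by $M(0)=\Id$, the holonomy factors as $\Hol_\gamma(\nabla_\epsilon)=\Phi_d(1)\,M(1)$, and since the determinant is $1$,
\[
\Tr\Hol_\gamma(\nabla_\epsilon)=p(1)\,M_{11}(1)+q(1)\,M_{22}(1).
\]
The crux is therefore to estimate the diagonal entries of $M(1)$. The decaying kernel $q/p$ is harmless, but $p/q$ grows exponentially, so a naive Picard iteration diverges; the key structural observation is that $p/q$ always enters multiplied by a quantity itself produced by integrating the \emph{decaying} kernel $q/p$, so its growth is self-limiting. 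Concretely, I would run a bootstrap (Grönwall) argument on the first column: from $M_{21}'=-(p/q)\gamma M_{11}$ and $M_{11}'=-(q/p)\beta M_{21}$ one sees that $(q/p)M_{21}$ remains $O(\epsilon)$, because the factor $q/p$ cancels the growth of the integral $\int(p/q)$, whence $M_{11}(1)=1+O(\epsilon)\to 1$. Applying the same mechanism to the second column gives $|M_{22}(t)|\le 1+C\epsilon^2|p(t)/q(t)|$, so that $|q(1)M_{22}(1)|\le |q(1)|+C\epsilon^2|p(1)|=o(|p(1)|)$.

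Combining the two estimates, $\Tr\Hol_\gamma(\nabla_\epsilon)=p(1)\,(1+o(1))$, and since $\Hol_\gamma(A_+)=\exp\!\big(-\int_0^1 a_+\,dt\big)\in\C^\times$ is the nonzero scalar diagonal holonomy of the dominant line, we obtain
\[
\lim_{\epsilon\searrow 0}\big(\Tr\Hol_\gamma(\nabla_\epsilon)\cdot\exp(-\epsilon^{-1}Z_\gamma)\big)=\Hol_\gamma(A_+)\neq 0,
\]
as claimed. The main obstacle is exactly the control of the exponentially growing kernel $p/q$ in the middle step: the WKB condition $\Re\mu>0$ throughout is what makes $|p/q|$ monotone with no turning points (where $\Re\mu$ would vanish), which is precisely what allows the bootstrap to close. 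As a consistency check, the same estimates show that the two exponentially large off-diagonal contributions $M_{12}(1)$ and $M_{21}(1)$ combine so that $\det M(1)\to 1$; these cancel in the determinant but never enter the trace, which is the structural reason only the dominant diagonal holonomy $\Hol_\gamma(A_+)$ survives in the limit. Finally, the $\epsilon\,\gamma^\ast\psi$ term enters only at order $\epsilon$ in both $\Phi_d$ and $B_o$ and hence does not affect the leading asymptotics, so the independence of $\psi$ from $\epsilon$ is used only to keep these corrections uniformly $O(\epsilon)$.
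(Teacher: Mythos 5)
The paper does not actually prove Proposition~\ref{prop:RegularWKB}: it only records (in the remark following it) that the statement goes back to \cite{GMN09} and has been formalized by Mochizuki. Your argument is the standard one behind those references --- factor out the explicit diagonal transport $\Phi_d$, then control the conjugated off-diagonal system by a Volterra/Gr\"onwall estimate whose convergence is driven by the monotone decay of the kernel $q/p$, which in turn is exactly the WKB condition $\Re\mu>0$ --- and it is essentially correct. It supplies a proof where the paper supplies a citation, so there is nothing in the text to compare it against beyond the setup of $L_\pm$, $A_\pm$, $A_o$, which you use as intended.

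Two points deserve tightening. First, the second-column bootstrap is more delicate than the first, because $M_{22}$ itself appears inside the iterated integral and is a priori allowed to be exponentially large; the clean way to close it is to apply Gr\"onwall not to $M_{22}$ but to the auxiliary quantity $g(t)=\int_0^t |(q/p)(s)|\,|M_{22}(s)|\,ds$, for which one gets $g'\le |q/p| + C\,(|q/p|\!\int_0^s|p/q|)\,g \le |q/p| + C'\epsilon\, g$ and hence $g=O(\epsilon)$, after which $|M_{22}(t)|\le 1+C\epsilon^{2}|p(t)/q(t)|$ follows as you state. As written, ``applying the same mechanism'' glosses over this self-referential step. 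Second, two bookkeeping items: the term $\epsilon\,\gamma^\ast\psi$ has a diagonal component in the eigenframe of $\phi$, which should be absorbed into $\Phi_d$ (contributing only a harmless factor $1+O(\epsilon)$ to $p(1)$, $q(1)$) rather than into $B_o$; and the identification of which of $L_\pm$ carries the growing transport depends on the sign convention in the flat-section equation~(\ref{eq:FlatSection}) --- with the paper's convention $s'=-(\epsilon^{-1}\gamma^\ast\phi+\cdots)s$ the eigenvalue $+\mu$ produces the \emph{decaying} solution, so one must either orient $\mu$, relabel $L_\pm$, or invert the transport to land on the stated answer $\Hol_\gamma(A_+)$. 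Neither issue affects the substance of the argument.
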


Here we have implicitly fixed an identification of the fibers over the endpoints of the interval.

\begin{rmk}
The statement of Proposition~\ref{prop:RegularWKB} goes back to~\cite{GMN09}, and the proof strategy that was outlined there has recently been formalized by Mochizuki~\cite{BDHH21}.
\end{rmk}
%\newpage
%\input{Sections/Asymptotic Expansion of ODEs}
%\newpage
\section{Nilpotent WKB analysis} \label{section:NilpotentWKB}

\subsection{A secondary Higgs field}
We shall be interested in the small $\epsilon$ asymptotics of a  family of flat connections
\begin{equation} \label{eq:FamFlatCxns}
    D_\epsilon = \epsilon^{-1} \phi + D + \epsilon \psi
\end{equation}
where the leading term $\phi$ is a nilpotent Higgs field on $E$, $D$ is a connection on $E$, and $\phi$ and $\psi$ are $\End (E)$-valued $1$-forms of type $(1,0)$ resp.\ $(0,1)$. Examples of these families include the real twistor lines $\nabla_\zeta$~(\ref{eq:RealTwistorLine}) and the families arising from the conformal limit $\mathcal{CL}_\hbar$~(\ref{eq:CLCxn}). We will be using the parameter $\epsilon$ for a general family and observe that its flatness implies a set of Hitchin-like equations:

\begin{lem}

Let $D_\epsilon = \epsilon^{-1} \phi + D + \epsilon \psi$ be a family of connections. Then flatness for all $\epsilon \in \C^\times$ is equivalent to the following set of equations:
\begin{align} \label{eq:GenHitchinEqns}
    \begin{split}
    \left[ \phi \wedge \phi \right] = \left[ \psi \wedge \psi \right] &= 0 , \\
    D \phi = D \psi &= 0, \\
    F_D + \left[ \phi \wedge \psi \right] &= 0.
    \end{split}
\end{align}
\end{lem}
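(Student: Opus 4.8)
The plan is to prove the equivalence by directly computing the curvature $F_{D_\epsilon}$ and observing that it is a Laurent polynomial in $\epsilon$ whose coefficients are exactly (up to harmless constants) the left-hand sides of the asserted equations. Writing $a := \epsilon^{-1}\phi + \epsilon\psi \in \Omega^1(\End E)$ so that $D_\epsilon = D + a$, I would invoke the standard formula for the curvature of a connection perturbed by an $\End E$-valued $1$-form,
\[
  F_{D_\epsilon} = F_D + D a + a \wedge a,
\]
where $D$ on the middle term denotes the induced connection on $\End E$ and $a \wedge a$ is the wedge product combined with composition of endomorphisms. Everything then reduces to expanding $Da$ and $a\wedge a$ and sorting by powers of $\epsilon$.

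Since $D a = \epsilon^{-1} D\phi + \epsilon\, D\psi$, the only computation of substance is $a\wedge a$, and here the one convention-sensitive point arises: for $\End E$-valued $1$-forms $\alpha,\beta$ one has $\alpha\wedge\beta + \beta\wedge\alpha = [\alpha\wedge\beta]$, and in particular $\phi\wedge\phi = \tfrac12[\phi\wedge\phi]$. Getting these factors and signs right is the only place to be careful. With that,
\[
  a\wedge a = \tfrac12\epsilon^{-2}[\phi\wedge\phi] + [\phi\wedge\psi] + \tfrac12\epsilon^2[\psi\wedge\psi],
\]
and collecting all terms by degree in $\epsilon$ yields
\[
  F_{D_\epsilon} = \tfrac12\epsilon^{-2}[\phi\wedge\phi] + \epsilon^{-1} D\phi + \big(F_D + [\phi\wedge\psi]\big) + \epsilon\, D\psi + \tfrac12\epsilon^2[\psi\wedge\psi],
\]
a Laurent polynomial whose five coefficients are $\End E$-valued $2$-forms independent of $\epsilon$.

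To finish, I would argue that flatness for \emph{all} $\epsilon\in\C^\times$ is equivalent to the vanishing of each coefficient separately. Indeed, for a fixed point of $C$ the expression above is a Laurent polynomial in $\epsilon$ with fixed coefficients, and a nonzero Laurent polynomial has only finitely many zeros; since it is required to vanish on all of the infinite set $\C^\times$, every coefficient must vanish. This gives $[\phi\wedge\phi]=[\psi\wedge\psi]=0$, $D\phi = D\psi = 0$, and $F_D + [\phi\wedge\psi]=0$, which is precisely the stated system; the converse is immediate, since if all five equations hold then $F_{D_\epsilon}\equiv 0$ for every $\epsilon$. I do not expect any genuine obstacle here: the argument is a routine curvature computation, and the only care required is in the bracket bookkeeping and in the (elementary) remark that evaluation at infinitely many values of $\epsilon$ is what upgrades ``flat for all $\epsilon$'' to the vanishing of each homogeneous piece.
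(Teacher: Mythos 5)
Your proposal is correct and follows essentially the same route as the paper: compute $F_{D_\epsilon}$ as a Laurent polynomial in $\epsilon$ and observe that vanishing for all $\epsilon \in \C^\times$ forces each coefficient to vanish (the paper states the expansion without your factors of $\tfrac12$, a harmless difference of bracket convention). Your explicit remark that a nonzero Laurent polynomial has only finitely many zeros is a slightly more careful justification of the final step, which the paper leaves implicit.
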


\begin{proof}
The proof is completely analogous to the one that relates Hitchin's equations to the flatness of the real twistor line. One readily computes
\begin{equation}
    F_{\nabla_\epsilon} = \epsilon^{-2} \left[ \phi \wedge \phi \right] + \epsilon^{-1} D \phi + F_D + \left[ \phi \wedge \psi \right] + \epsilon D \psi + \epsilon^2 \left[ \psi \wedge \psi \right],
\end{equation}
and its vanishing for all choices of $\epsilon$ is equivalent to~(\ref{eq:GenHitchinEqns}).
\hfill \end{proof}

\medskip

Notice that in particular $\delbar_E \phi = 0$ where $\delbar_E$ is the $(0,1)$-part of $D$ and thereby a holomorphic structure on $E$. We will be concerned with the scenario where $(\delbar_E, \phi)$ defines a stable, non-zero, nilpotent Higgs bundle, and will restrict our attention to the case of $G = \SL (2, \C )$. By nilpotency, $\phi$ defines a holomorphic line subbundle $\mathcal{L}_1:= \ker \phi$ of $\mathcal{E}$ and we can choose a $C^\infty$-complement $L_2$. There is a canonical choice for $L_2$ which we will make by taking it to be the orthogonal complement with respect to the harmonic metric of $(\delbar_E, \phi)$ of $\mathcal{L}_1$. Since we work in a smooth setting, we will use $L_1$ to denote the smooth bundle underlying $\mathcal{L}_1$. This splitting induces a decomposition of $D$ of the form
\begin{equation} \label{eq:CxnSplitting}
    D = A_{1} + D' + A_{-1},
\end{equation}
where $A_1 \in \Gamma (\Hom (L_2 , L_1) \otimes \Omega^1)$, $A_{-1} \in \Gamma (\Hom (L_1, L_2) \otimes \Omega^1)$, and $D' = D_1 \oplus D_2$ is the direct sum of connections on $L_1$ and $L_2$. For a fixed $\epsilon \in \C^\times$ we can pick a square-root $\epsilon^{1/2}$ which defines an automorphism on $L_1$ (resp.\ $L_2$) by multiplication by $\epsilon^{1/2}$ (resp.\ $\epsilon^{-1/2}$) and hence an automorphism of $E$ of the form
\begin{equation} \label{eq:SL2GaugeMatrix}
g(\epsilon) = \begin{pmatrix} \epsilon^{1/2} & 0 \\ 0 & \epsilon^{-1/2} \end{pmatrix}.
\end{equation}
Notice that the ambiguity in picking a square-root contributes an overall sign, which is central in $\SL (2, \C)$, so the gauge transformation is well-defined. Conjugating by this (constant on $C$) transformation one obtains a new family
\begin{equation} \label{eq:NewFamFlatCxns}
D ' _\epsilon := D_{\epsilon^2}.g(\epsilon) = \epsilon^{-1} (A_{-1} + \phi) + D' + \mathcal{O}(\epsilon).
\end{equation}
Observe that we have rescaled the original family to be $D_{\epsilon^2}$ because $D_\epsilon . g(\epsilon{1/2})$ is not single-valued in $\epsilon$.

\begin{rmk}
Notice that the connection $D'$ is diagonal, in particular its $(0,1)$-part $\delbar'_E$ is. Thus, the underlying holomorphic vector bundle is a direct sum of line bundles $\mathcal{E}' = \mathcal{L}_1 \oplus \mathcal{L}_2$, the associated graded to the filtered bundle defined by the nilpotent Higgs field $\phi$.
\end{rmk}

\begin{prop} \label{prop:NewHiggsField}
Let $\Phi = A_{-1} + \phi$. The pair $(\delbar'_E, \Phi)$ defines a stable Higgs bundle. 
\end{prop}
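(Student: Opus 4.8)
My plan is to verify in turn that $\Phi=A_{-1}+\phi$ is a $(1,0)$-form, that it is holomorphic for $\delbar'_E$, that $(\delbar'_E,\Phi)$ is an $\SL(2,\C)$-Higgs bundle, and finally that it is stable; the stability is the real content and will follow from the single fact that $\deg\ker\phi<0$. The first point is where I expect to have to be careful. Because $\mathcal{L}_1=\ker\phi$ is a \emph{holomorphic} subbundle of $\mathcal{E}$, the operator $\delbar_E$ preserves $L_1$, which in the block decomposition with respect to $L_1\oplus L_2$ says precisely that the $(0,1)$-part of the $\Hom(L_1,L_2)$-component of $D$ vanishes, i.e. that $A_{-1}$ has no $(0,1)$-part. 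Hence $A_{-1}$, and therefore $\Phi$, is of pure type $(1,0)$. (The nonzero off-diagonal $(0,1)$-piece of $D$ is the second fundamental form of $\mathcal{L}_1$, which sits in $A_1$, not in $A_{-1}$.)

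Next I would observe that $D'_\epsilon$ is flat for every $\epsilon\in\C^\times$: it is the gauge transform by $g(\epsilon)$ of the flat family $D_{\epsilon^2}$, and gauge transformations preserve flatness. Tracking the weights of the various blocks under conjugation by $g(\epsilon)$ puts $D'_\epsilon$ into the standard shape $D'_\epsilon=\epsilon^{-1}\Phi+D'+\epsilon\Psi'$, with $D'=\delbar'_E+\partial'_E$ the diagonal connection and $\Psi'=A_1+\psi$. I can then feed this into the Lemma on generalized Hitchin equations: the vanishing of the $\epsilon^{-1}$-coefficient of the curvature is $D'\Phi=0$, and since $\Phi$ is $(1,0)$ and $C$ is a curve the $(2,0)$-component is automatically zero, so this is exactly $\delbar'_E\Phi=0$. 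Thus $\Phi$ is holomorphic. It is traceless because it is off-diagonal in $\mathcal{L}_1\oplus\mathcal{L}_2$, and $\det\mathcal{E}'=\mathcal{L}_1\otimes\mathcal{L}_2\cong\det\mathcal{E}\cong\O_C$ since $\mathcal{E}'$ is the associated graded of the filtration of $\mathcal{E}$; hence $(\delbar'_E,\Phi)$ is an $\SL(2,\C)$-Higgs bundle.

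For stability, let $\mathcal{M}\subset\mathcal{E}'$ be a proper, hence rank-one, $\Phi$-invariant holomorphic subbundle, and consider the projection $\pi_1:\mathcal{M}\to\mathcal{L}_1$. If $\pi_1\neq0$ it is a nonzero map of line bundles and hence injective, so $\deg\mathcal{M}\le\deg\mathcal{L}_1$. If instead $\pi_1=0$ then $\mathcal{M}\subset\mathcal{L}_2$ and thus $\mathcal{M}=\mathcal{L}_2$; but $\Phi(\mathcal{L}_2)=\phi(\mathcal{L}_2)\subset\mathcal{L}_1\otimes K_C$ with $\phi\neq0$, so $\mathcal{L}_2$ is not $\Phi$-invariant, a contradiction. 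Therefore every $\Phi$-invariant line subbundle satisfies $\deg\mathcal{M}\le\deg\mathcal{L}_1$. Finally, $\mathcal{L}_1=\ker\phi$ is a $\phi$-invariant subbundle of the \emph{original} stable Higgs bundle, which has slope $0$, so $\deg\mathcal{L}_1=\mu(\mathcal{L}_1)<0$. Combining, $\mu(\mathcal{M})<0=\mu(\mathcal{E}')$ for every proper invariant subbundle, which is exactly stability.

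The step I anticipate needing the most care is the first one: controlling the $(0,1)$-types of the off-diagonal connection terms, establishing that $A_{-1}$ is purely $(1,0)$ and correctly identifying $\delbar'_E$ as the diagonal part of $\delbar_E$. Only once $\Phi$ is known to be a genuine $(1,0)$ Higgs field does holomorphicity drop out of the Lemma; after that, stability is an entirely elementary degree count powered by $\deg\ker\phi<0$, and the trace and determinant conditions are formal.
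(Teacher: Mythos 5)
Your proof is correct and follows the same overall skeleton as the paper's: establish that $\Phi$ is of pure type $(1,0)$, deduce $\delbar'_E\Phi=0$ from the $\epsilon^{-1}$-coefficient of the flatness equations, and then prove stability by the degree count $\deg\mathcal{M}\leq\deg\mathcal{L}_1<0$ for any $\Phi$-invariant line subbundle (the paper's stability argument, adapted from Biswas--Heller--R\"oser, is word for word the one you give, via the projection $\mathcal{M}\to\mathcal{E}'/\mathcal{L}_2\simeq\mathcal{L}_1$). The one step you handle differently is the type of $A_{-1}$: the paper extracts $A_{-1}^{(0,1)}=0$ from the equation $[\Phi\wedge\Phi]=2[A_{-1}\wedge\phi]=0$ together with $\phi\neq0$, whereas you read it off directly from the fact that $\mathcal{L}_1=\ker\phi$ is a holomorphic subbundle for $\delbar_E$, so that the $\Hom(L_1,L_2)$-component of $\delbar_E$ vanishes. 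Both are valid, and your route is arguably more transparent; it is also the argument the paper itself uses later in the proof of Proposition~\ref{prop:SL2TwistorLine} (``the fact that $L_1$ is a holomorphic subbundle implies that $\bar{\eta}_{-1}=0$''). Two cosmetic remarks: the second fundamental form of $\mathcal{L}_1$ is conventionally the $(1,0)$-form $A_{-1}$ itself (its negative adjoint is the off-diagonal $(0,1)$-piece sitting in $A_1$), so your parenthetical has the terminology reversed though the content is right; and the identity $\Psi'=A_1+\psi$ holds on the nose only when $\psi$ lies in the $\Hom(L_1,L_2)$-block (as for twistor lines), but this is immaterial since your argument only uses the $\epsilon^{-1}$- and $\epsilon^{0}$-coefficients of $D'_\epsilon$.
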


\begin{proof}
Flatness of the family $D'_\epsilon$ implies a set of equations similar to (\ref{eq:GenHitchinEqns}), in particular

\begin{equation} \label{eq:AnotherFlatness}
    \left[ \Phi \wedge \Phi \right] = 2 \left[ A_{-1} \wedge \phi \right] = 0.
\end{equation}
$\phi$ is strictly upper-triangular with respect to the splitting we chose, while $A_{-1}$ is strictly lower-triangular, so their commutator is a priori diagonal and non-zero. Since $\phi$ is of type $(1,0)$ and non-zero, (\ref{eq:AnotherFlatness}) implies that $A_{-1}^{(0,1)}$, the $(0,1)$-part of $A_{-1}$, must vanish. Moreover, since $D' \Phi = 0$ and $\Phi$ is of pure type $(1,0)$, this implies that $\delbar'_E \Phi = 0$. 

Lastly we need to show stability, for which we can adapt a proof from~\cite{BHR}. Any holomorphic line subbundle $\mathcal{L}$ of $\mathcal{E}'$ induces a holomorphic line bundle homomorphism
\begin{equation}
    \mathcal{L} \mapsto \mathcal{E}'/\mathcal{L}_2 \simeq \mathcal{L}_1
\end{equation}
which is identically zero if and only if $\mathcal{L} \simeq \mathcal{L}_2$. Since $\phi \neq 0$, $\mathcal{L}_2$ is not an invariant subbundle, and hence for any invariant subbundle $\mathcal{L}$ one obtains $\deg \mathcal{L} < \deg \mathcal{L}_1$, which together with $\deg \mathcal{L}_1 < 0$ by stability of $(\delbar_E, \phi)$ completes the proof. \hfill
\end{proof} \\

\begin{rmk}
One important invariant of an $\SL (2, \C)$-Higgs bundle $\Phi$ is the associated quadratic differential $q_2 = \frac{1}{2} \Tr (\Phi^2) \in H^0 (C, K_C^2)$. In the case at hand this has a very geometric meaning: Let $\partial_E$ denote the $(1,0)$-part of $D$, and consider the composite map
\begin{equation}
    Q_2 := \phi \circ \partial_E : \mathcal{L}_1 \xrightarrow{\partial_E} \mathcal{E} \otimes K_C \xrightarrow{\phi} \mathcal{L}_1 \otimes K_C^2.
\end{equation}
\end{rmk}

\begin{lem}
$Q_2$ acts by multiplication by a holomorphic quadratic differential $q_2 \in H^0(C, K_C^2)$.
\end{lem}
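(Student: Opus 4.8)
The plan is to prove the statement in three moves: first that $Q_2$ is tensorial (hence genuine multiplication by a smooth section of $K_C^2$), then to identify the resulting section with $\tfrac12\Tr(\Phi^2)$, and finally to invoke Proposition~\ref{prop:NewHiggsField} to conclude holomorphicity.

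First I would check $\mathcal{O}_C$-linearity. For a local function $f$ and a local section $s$ of $\mathcal{L}_1$, Leibniz gives $\partial_E(fs) = \partial f \otimes s + f\,\partial_E s$, and applying $\phi$ (viewed as the $\mathcal{O}_C$-linear bundle map $\mathcal{E}\to\mathcal{E}\otimes K_C$ tensored with $\id_{K_C}$) annihilates the first term, because $\partial f\otimes s$ lies in $\mathcal{L}_1\otimes K_C=\ker\phi\otimes K_C$. Hence $Q_2(fs)=f\,Q_2(s)$, so $Q_2$ is a bundle map $\mathcal{L}_1\to\mathcal{L}_1\otimes K_C^2$, i.e.\ multiplication by a smooth section $q_2\in\Gamma(C,K_C^2)$. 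The point to be careful about here is that the composition in $Q_2$ is a tensor composition in the two $K_C$-factors, not a wedge; otherwise it would land in $\Omega^{2,0}=0$.

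Second I would compute $q_2$ explicitly using the splitting \eqref{eq:CxnSplitting}. Restricted to $\mathcal{L}_1$, the operator $\partial_E$ has a diagonal part (valued in $\mathcal{L}_1\otimes K_C$) and a strictly lower off-diagonal part, which is exactly the $(1,0)$-component of $A_{-1}$; since $A_{-1}^{(0,1)}=0$ (established in the proof of Proposition~\ref{prop:NewHiggsField}) this off-diagonal part is all of $A_{-1}$. Applying $\phi$ kills the diagonal part ($\mathcal{L}_1=\ker\phi$) and sends the $A_{-1}$-part into $\mathcal{L}_1\otimes K_C^2$, so that $Q_2=\phi\circ A_{-1}$ as a map $\mathcal{L}_1\to\mathcal{L}_1\otimes K_C^2$. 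Writing $\Phi=A_{-1}+\phi$ in the block form dictated by $\mathcal{E}'=\mathcal{L}_1\oplus\mathcal{L}_2$, the endomorphism $\Phi^2$ is diagonal with upper-left entry exactly $\phi\circ A_{-1}$; since $\Phi$ is traceless the two diagonal entries agree as sections of $K_C^2$, whence $q_2=\phi\circ A_{-1}=\tfrac12\Tr(\Phi^2)$.

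Finally, holomorphicity is immediate: by Proposition~\ref{prop:NewHiggsField} the pair $(\delbar'_E,\Phi)$ is a holomorphic Higgs bundle, so $\delbar'_E\Phi=0$, and therefore $\tfrac12\Tr(\Phi^2)$ is a holomorphic section of $K_C^2$ (the trace of the composition square of a holomorphic $K_C$-valued endomorphism). This identifies $q_2$ with the holomorphic quadratic differential of the secondary Higgs field, giving the stated geometric meaning. I expect the main obstacle to be purely one of bookkeeping: keeping the two $K_C$-factors straight in the composition $\phi\circ\partial_E$, and correctly matching the off-diagonal contribution of $\partial_E$ to $A_{-1}=\Phi-\phi$ rather than to $A_1$ or the diagonal part.
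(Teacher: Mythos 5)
Your proof is correct, and for the holomorphicity step it takes the route that the paper's proof only alludes to (``this follows directly from the previous proposition'') rather than the one it actually writes out. The first half is identical: both you and the paper establish $\mathcal{O}_C$-linearity from the Leibniz rule together with $\phi(s)=0$. For holomorphicity the paper does a direct computation, applying $\delbar_E$ to $\phi\circ\partial_E(s)$ for a holomorphic section $s$ of $\mathcal{L}_1$ and using the flatness equations \eqref{eq:GenHitchinEqns} in the form $\delbar_E\phi=0$, $F_D=-[\phi\wedge\psi]$, together with $\phi^2=0$ and $\phi(s)=0$, to get zero. You instead reduce to Proposition~\ref{prop:NewHiggsField}: you identify $Q_2$ with $\phi\circ A_{-1}$ (using $A_{-1}^{(0,1)}=0$ and $\phi|_{\mathcal{L}_1}=0$), recognize this as the diagonal entry of $\Phi^2$, hence as $\tfrac12\Tr(\Phi^2)$, and conclude holomorphicity from $\delbar'_E\Phi=0$. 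What your route buys is a proof of the identification $q_2=\tfrac12\Tr(\Phi^2)$, which the paper asserts in the remark preceding the lemma but never verifies, and which is exactly the ``geometric meaning'' the remark advertises; what the paper's direct computation buys is self-containedness and a transparent accounting of which flatness equations are responsible (it does not need the splitting $E=L_1\oplus L_2$ or the secondary Higgs field at all). One small quibble: the two diagonal entries of $\Phi^2$ agree not really ``because $\Phi$ is traceless'' in any essential way but because $\psi\circ\omega$ and $\omega\circ\psi$ are the same section of $K_C^2$ for line-bundle maps $\psi\in\Hom(\mathcal{L}_2,\mathcal{L}_1)\otimes K_C$ and $\omega\in\Hom(\mathcal{L}_1,\mathcal{L}_2)\otimes K_C$ (though Cayley--Hamilton for a traceless $2\times 2$ matrix does also give it); this does not affect the conclusion.
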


\begin{proof}
This follows directly from the previous proposition but it is enlightening to check it by hand. We need to prove two things: Firstly, that $Q_2$ is linear over functions $f \in \mathcal{O} (C)$. Secondly, that $Q_2$ is indeed holomorphic.

To see that $Q_2$ is linear over functions, let $f \in \mathcal{O} (C)$ and $s \in \Gamma(C, \mathcal{L})$, then
\begin{equation}
    Q_2 (f \cdot s) = \phi (\partial f \cdot s + f \cdot \partial_E s) = \partial f \cdot \phi(s) + f \cdot \phi( \partial_E s) = f \cdot Q_2(s),
\end{equation}
where we used $\phi (s) = 0$ in the last step.

Next we check holomorphicity. $\phi$ is holomorphic by (\ref{eq:GenHitchinEqns}) and so commutes with $\delbar_E$. We can then compute the $\delbar$-derivative of $Q_2 (s)$ as
\begin{align}
    \delbar_E \left( \phi \cdot \partial_E (s) \right) &= \phi \cdot \delbar_E \partial_E(s) \\ 
    &= \phi \cdot F_D (s) \\
    &= -\phi \cdot [\phi, \psi] (s) \\ 
    &= -\phi^2 (\psi(s)) - \phi \circ \psi (\phi (s)) \\
    &= 0.  
\end{align}
Here we used (\ref{eq:GenHitchinEqns}) as well as the assumptions $\phi^2 = 0$ and $\phi(s)=0$.
\hfill \end{proof}

The interesting case will be when $\Phi$ is not nilpotent even though $\phi$ was. We conjecture that this is true whenever $(\delbar_E, \phi)$ is not a fixed point of the $\C^\times$-action but will show it only for the two most important families of flat connections: Those arising from either a real twistor line or the conformal limit.

\begin{prop} \label{prop:SL2TwistorLine}
Let $(\delbar_E , \phi)$ be a stable nilpotent Higgs bundle, and denote by $\nabla_\zeta = \zeta^{-1} \phi + \delbar_E + \partial_E^h + \zeta \phi^{\dagger_h}$ the real twistor line it defines as in~(\ref{eq:RealTwistorLine}) (with $R=1$). Then the associated \textit{secondary Higgs field} $\Phi$ is nilpotent if and only if $[(\delbar_E , \phi)]$ is a $\C^\times$-fixed point.
\end{prop}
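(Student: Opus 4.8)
The plan is to reduce the nilpotency of $\Phi$ to the vanishing of the single quadratic differential $q_2 = \frac{1}{2}\Tr(\Phi^2)$, and then to identify that vanishing with the $\C^\times$-fixed point condition. Since $\Phi$ is a traceless $2\times 2$ Higgs field, Cayley--Hamilton gives $\Phi^2 = -\det(\Phi)\cdot\id$ and $q_2 = -\det(\Phi)$, so $\Phi$ is nilpotent if and only if $q_2 = 0$. With respect to the $C^\infty$-splitting $E = L_1 \oplus L_2$ (with $\mathcal{L}_1 = \ker\phi$ and $L_2 = \mathcal{L}_1^{\perp_h}$ the orthogonal complement for the harmonic metric), $\phi$ is strictly upper-triangular and $A_{-1}$ is strictly lower-triangular of type $(1,0)$, so $\Phi = A_{-1} + \phi$ is off-diagonal and $\Phi^2$ is diagonal; its upper-left entry is precisely the composite $Q_2 = \phi\circ A_{-1} = \phi\circ\partial_E|_{\mathcal{L}_1}$ of the preceding Remark. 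Hence $q_2 = 0$ if and only if $\phi\circ A_{-1} = 0$.

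Next I would observe that, in the associated graded holomorphic structure $\delbar'_E$, both $\phi$ (as the map $\mathcal{L}_2 \to \mathcal{L}_1 \otimes K_C$) and $A_{-1} = \Phi - \phi$ (as the map $\mathcal{L}_1 \to \mathcal{L}_2 \otimes K_C$) are holomorphic sections of line bundles, by Proposition~\ref{prop:NewHiggsField} and its proof. Because $(\delbar_E,\phi)$ is nonzero and nilpotent, $\phi$ is a nonzero holomorphic bundle map between line bundles, hence injective as a sheaf morphism (vanishing only on a finite set). Therefore $\phi\circ A_{-1} = 0$ forces $A_{-1}$ to vanish on the complement of that finite set, and holomorphicity then gives $A_{-1} \equiv 0$. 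Combining with the first paragraph, $\Phi$ is nilpotent if and only if the second fundamental form $A_{-1}$ of $\ker\phi \subset \mathcal{E}$ vanishes identically.

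It remains to match $A_{-1} = 0$ with the fixed point condition. One direction is immediate: if $A_{-1} = 0$, then $L_2 = \mathcal{L}_1^{\perp_h}$ is a holomorphic subbundle, so $\mathcal{E} = \mathcal{L}_1 \oplus \mathcal{L}_2$ holomorphically with $\phi$ strictly upper-triangular, which is exactly the graded form of Proposition~\ref{prop:FixedPoints} (with $k=2$, $\mathcal{V}_1 = \mathcal{L}_1$, $\mathcal{V}_2 = \mathcal{L}_2$); thus $(\delbar_E,\phi)$ is a fixed point. The converse is the main obstacle, and it is genuinely a statement about the harmonic metric rather than a cohomological one: since $\deg\big(\Hom(\mathcal{L}_1,\mathcal{L}_2)\otimes K_C\big)$ can be positive, $A_{-1}$ need not vanish for holomorphic reasons alone. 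Here I would invoke the fact that the harmonic metric of a polystable $\C^\times$-fixed point (a complex variation of Hodge structure) respects the grading, i.e.\ makes the summands $\mathcal{V}_i$ mutually orthogonal; this follows from uniqueness of the harmonic metric together with $\C^\times$-equivariance, as the isomorphism $\xi\cdot(\delbar_E,\phi)\cong(\delbar_E,\phi)$ forces $h$ to be invariant under the grading flow. Granting this, for a fixed point the orthogonal complement $L_2 = \mathcal{L}_1^{\perp_h}$ coincides with the holomorphic summand $\mathcal{V}_2$, the Chern connection is block-diagonal, and so $A_{-1} = 0$, completing the equivalence.
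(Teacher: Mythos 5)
Your proof is correct and follows essentially the same route as the paper: both arguments reduce nilpotency of $\Phi$ to the vanishing of the off-diagonal $(1,0)$-component $A_{-1}$ of the Chern connection in the splitting $E = L_1 \oplus L_1^{\perp_h}$, and then identify $A_{-1}=0$ with the holomorphic-grading characterization of fixed points from Proposition~\ref{prop:FixedPoints} via unitarity of the Chern connection. You are in fact somewhat more explicit than the paper in two places it leaves implicit — justifying that $\Phi$ is nilpotent iff $A_{-1}=0$ (via $q_2 = -\det\Phi$ and sheaf-injectivity of $\phi$ as a map of line bundles), and verifying that the harmonic metric of a VHS makes the Hodge summands orthogonal — both of which are correct and worth spelling out.
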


\begin{proof}
As before we use the $C^\infty$-decomposition $E = L_1 \oplus L_2$ and decompose in analogy with~(\ref{eq:CxnSplitting})
\begin{equation}
    \delbar_E = \bar{\eta}_{-1} + \delbar'_E + \bar{\eta}_1, 
    \hspace{0.5cm} \partial^h_E = \eta_{-1} + \partial'_E + \eta_1 .
\end{equation}
The fact that $L_1$ is a holomorphic subbundle implies that $\bar{\eta}_{-1} = 0$, hence we need to check whether $\eta_{-1} = A_{-1}$ vanishes or not. Notice that $\bar{\eta}_{1} =0$ if and only if $[(\delbar_E , \phi)]$ is a $\C^\times$-fixed point by the characterization in Proposition~\ref{prop:FixedPoints}. Assume that it is not a fixed point, then $\bar{\eta}_{1} \neq 0$ and consequently $\eta_{-1} \neq 0$ by unitarity of the Chern connection $D$. Conversely, if it is a fixed point then $\bar{\eta}_{1} =0$ and by unitarity of $D$ one obtains $A_{-1} = \eta_{-1} = 0$. \hfill
\end{proof} \\

\begin{prop} \label{prop:SL2CL}
Let $\big[ (\delbar_E,\phi) \big]$ be a nonzero stable, nilpotent Higgs bundle whose underlying vector bundles is stable. Let 
\begin{equation}
\mathcal{CL}_\hbar (\delbar_E, \phi) = \hbar^{-1} \phi + \delbar_E + \partial_0 + \hbar  \phi_0^\dagger
\end{equation}
be the family of flat connections defined via the conformal limit construction~(\ref{eq:CLCxn}). Then the associated \textit{secondary Higgs field} $\Phi$ is not nilpotent.
\end{prop}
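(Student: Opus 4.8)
The plan is to reduce the non-nilpotency of $\Phi$ to the single question of whether the $(1,0)$-operator $\partial_0$ appearing in the conformal limit preserves the line subbundle $\mathcal{L}_1 = \ker\phi$, and then to obstruct this using stability of $(E,\delbar_E)$. First I would invoke the Lemma following Proposition~\ref{prop:NewHiggsField}: the map $Q_2 = \phi\circ\partial_E$ acts by multiplication by $q_2 = \tfrac12\Tr(\Phi^2)\in H^0(C,K_C^2)$, where $\partial_E$ is the $(1,0)$-part of the connection in the family. Since a traceless $2\times 2$ endomorphism is nilpotent exactly when $\det\Phi$, equivalently $\Tr(\Phi^2)$, vanishes, $\Phi$ is nilpotent if and only if $q_2=0$, i.e. if and only if $Q_2=0$. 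For the conformal limit family the relevant connection is $D=\delbar_E+\partial_0$, so $\partial_E=\partial_0$ and $Q_2=\phi\circ\partial_0$. Because $\phi$ annihilates $\mathcal{L}_1$ and induces an injection $\mathcal{E}/\mathcal{L}_1\hookrightarrow\mathcal{L}_1\otimes K_C$ (a nonzero map of line bundles), the vanishing $Q_2=0$ is equivalent to $\partial_0\big(\Gamma(\mathcal{L}_1)\big)\subseteq\Gamma(\mathcal{L}_1\otimes K_C)$, that is, to $\partial_0$ preserving $\mathcal{L}_1$.

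The crucial step is to identify the fixed point $(\delbar_0,\phi_0)=\lim_{\xi\to 0}\xi\cdot(\delbar_E,\phi)$, and hence $\partial_0$. Here I would use the hypothesis on the bundle: since $(E,\delbar_E)$ is stable, the Higgs bundle $(\delbar_E,\xi\phi)$ is stable for every $\xi$, and the family $\xi\mapsto[(\delbar_E,\xi\phi)]$ extends continuously (indeed algebraically) to $\xi=0$. As $\MH^{ps}$ is separated, the limit must be $(\delbar_0,\phi_0)=(\delbar_E,0)$, which is already a $\C^\times$-fixed point and is a \emph{stable} VHS precisely because $(E,\delbar_E)$ is stable. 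In particular $\phi_0=0$ (so the term $\hbar\phi_0^\dagger$ drops out and $D=\delbar_E+\partial_0$ as asserted), $\delbar_0=\delbar_E$, and $h_0$ is the Hermite--Einstein metric of the stable bundle $(E,\delbar_E)$, with $\partial_0$ the $(1,0)$-part of its Chern connection $D_0=\delbar_0+\partial_0$.

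Finally I would rule out $\partial_0$ preserving $\mathcal{L}_1$. Since $\delbar_0=\delbar_E$ already preserves the holomorphic subbundle $\mathcal{L}_1$, were $\partial_0$ to preserve it as well then $\mathcal{L}_1$ would be a $D_0$-parallel subbundle of $E$. But the Hermite--Einstein connection $D_0$ of the stable degree-zero bundle $(E,\delbar_E)$ is irreducible: a proper $D_0$-parallel subbundle would split $E$ holomorphically and isometrically into Hermite--Einstein summands of slope $\mu(E)=0$, contradicting the strict slope inequality demanded by stability. Hence $\partial_0$ does not preserve $\mathcal{L}_1$, so $Q_2\neq 0$ and $\Phi$ is not nilpotent.

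I expect the main obstacle to be the second step: recognizing that stability of the underlying bundle forces the $\C^\times$-limit to be the zero-Higgs fixed point $(\delbar_E,0)$, rather than an associated-graded object carrying a nonzero secondary field $\phi_0$. This is exactly where the hypothesis enters, and it locates the source of non-nilpotency differently than in Proposition~\ref{prop:SL2TwistorLine}: it is the irreducibility of the Hermite--Einstein connection $\partial_0$, not unitarity tested against a nonzero second fundamental form, that produces the nonvanishing degree $(-1)$ component $A_{-1}$ and hence a non-nilpotent $\Phi$.
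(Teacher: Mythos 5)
Your proposal is correct and follows essentially the same route as the paper: reduce non-nilpotency of $\Phi$ to the statement that $D=\delbar_E+\partial_0$ does not preserve $\mathcal{L}_1=\ker\phi$ (you do this via the $Q_2$ lemma, the paper via the vanishing of $A_{-1}$ — equivalent reductions, both present in the text), then use stability of $(E,\delbar_E)$ to identify the $\C^\times$-limit as $(\delbar_E,0)$ and invoke the Non-abelian Hodge Theorem to conclude that the flat connection $\delbar_0+\partial_0$ is irreducible and hence admits no invariant subbundle. The only cosmetic difference is that you spell out the irreducibility of the Hermite--Einstein connection where the paper simply cites NAHT.
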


\begin{proof}
Let us restrict to the case $R=1$, the case of general $R$ being completely analogous. As in the proof of Proposition~\ref{prop:SL2TwistorLine} it suffices to show that $D = \delbar_E + \partial_0$ does not preserve $\mathcal{L}_1 = \ker (\phi )$. Since $(E, \delbar_0)$ is a stable vector bundle, $\phi_0 = 0$ by Proposition~\ref{prop:FixedPoints}, and moreover $\delbar_E = \delbar_0$ by~\cite{collier2019conformal}. In particular, invoking NAHT (Theorem~\ref{thm:NAHT}) we see that $D = \delbar_E + \partial_0 = \delbar_0 + \partial_0$ is an irreducible connection because $(\delbar_0, 0)$ is stable, and has hence no invariant subbundles.
\hfill \end{proof}

\begin{rmk}
It would be desirable to prove Proposition~\ref{prop:SL2CL} without the assumption that $(E, \delbar_E)$ is stable, in particular in light of the application we give in Corollary~\ref{cor:SimpsonConj}. We have been unable to overcome the issue of determining the shape of $\partial_0$ in terms of the decomposition of $E$ coming from the Higgs field $(\delbar_E, \phi)$.
\end{rmk}

\subsection{$\C^\times$-orbits of nilpotent Higgs bundles}

Let $(\delbar_E, \phi)$ be a nilpotent Higgs bundle that is not a $\C^\times$-action fixed point. We want to study how its orbit $\zeta \cdot (\delbar_E, \phi)$ degenerates as $\zeta \to 0, \infty$. This is a Higgs bundle version of the Rees construction (see e.g.\ \cite{HauselHitchin}, Proposition 3.4) but we want to use of the gauge transformation (\ref{eq:SL2GaugeMatrix}) to make a more quantitative statement.

We use the $C^\infty$-splitting $E = L_1 \oplus L_2$ discussed before, with respect to which one can write
\begin{equation}
   \delbar_E = \begin{pmatrix} \delbar_1 & \bar{\eta}_{1} \\ 0 & \delbar_2 \end{pmatrix}, \hspace{1cm} \phi = \begin{pmatrix} 0 & \psi \\ 0 & 0 \end{pmatrix} .
\end{equation}

\noindent Then $\xi \in \C^\times$ acts as
\begin{equation}
    \begin{split}
        \xi \cdot \left( \begin{pmatrix} \delbar_1 & \bar{\eta}_{1} \\ 0 & \delbar_2 \end{pmatrix},   \begin{pmatrix} 0 & \psi \\ 0 & 0 \end{pmatrix} \right) 
            &= \left( \begin{pmatrix} \delbar_1 & \bar{\eta}_{1} \\ 0 & \delbar_2 \end{pmatrix},   \begin{pmatrix} 0 & \xi \cdot \psi \\ 0 & 0 \end{pmatrix} \right) \\
            &\sim \begin{pmatrix} \xi^{-1/2} & 0 \\ 0 & \xi^{1/2} \end{pmatrix} \cdot \left( \begin{pmatrix} \delbar_1 & \bar{\eta}_{1}\\ 0 & \delbar_2 \end{pmatrix},   \begin{pmatrix} 0 & \xi \cdot \psi \\ 0 & 0 \end{pmatrix} \right) \cdot \begin{pmatrix} \xi^{1/2} & 0 \\ 0 & \xi^{-1/2} \end{pmatrix} \\
            &= \left( \begin{pmatrix} \delbar_1 & \xi^{-1} \bar{\eta}_{1} \\ 0 & \delbar_2 \end{pmatrix},   \begin{pmatrix} 0 & \psi \\ 0 & 0 \end{pmatrix} \right).
    \end{split}
\end{equation}
In particular we see that
\begin{equation}
    \lim_{\xi \to \infty} \xi \cdot \left( \begin{pmatrix} \delbar_1 & \bar{\eta}_{1} \\ 0 & \delbar_2 \end{pmatrix},   \begin{pmatrix} 0 & \psi \\ 0 & 0 \end{pmatrix} \right) = \left( \begin{pmatrix} \delbar_1 & 0 \\ 0 & \delbar_2 \end{pmatrix},   \begin{pmatrix} 0 & \psi \\ 0 & 0 \end{pmatrix} \right) =: (\delbar_\infty, \phi_\infty),
\end{equation}
the \textit{associated graded} to $(\delbar_E, \phi)$.

If the bundle $(E, \delbar_E)$ is itself stable as a vector bundle, we also see that
\begin{equation}
    \lim_{\xi \to 0} \xi \cdot (\delbar_E , \phi) \to (\delbar_E, 0).
\end{equation}
Otherwise we have to be a little more careful and implore the parametrization from Proposition~\ref{prop:ParametrizationStableMfld}. In the rank two case this means that a fixed point $(\delbar_0, \phi_0)$ with nonzero Higgs field is of the form
\begin{equation}
    \delbar_0 = \begin{pmatrix} \delbar_1 & 0 \\ 0 & \delbar_2 \end{pmatrix}, \hspace{1cm} \phi_0 = \begin{pmatrix} 0 & \psi \\ 0 & 0 \end{pmatrix} .
\end{equation}
Any stable Higgs bundle $(\delbar_E , \phi)$ that flows to $(\delbar_0, \phi_0)$ is necessarily of the form
\begin{equation}
    \delbar_E = \begin{pmatrix} \delbar_1 & 0 \\ \bar{\eta}_{-1} & \delbar_2 \end{pmatrix}, \hspace{1cm} \phi = \begin{pmatrix} \mu & \psi \\ \chi & -\mu \end{pmatrix}.
\end{equation}
Indeed, 
\begin{equation}
    \begin{split}
        \xi \cdot \left( \begin{pmatrix} \delbar_1 & 0 \\ \bar{\eta}_{-1} & \delbar_2 \end{pmatrix},  \begin{pmatrix} \mu & \psi \\ \chi & -\mu \end{pmatrix} \right) 
            &= \left( \begin{pmatrix} \delbar_1 & 0 \\ \bar{\eta}_{-1} & \delbar_2 \end{pmatrix},   \xi \cdot \begin{pmatrix} \mu & \psi \\ \chi & -\mu \end{pmatrix} \right) \\
            &\sim \begin{pmatrix} \xi^{-1/2} & 0 \\ 0 & \xi^{1/2} \end{pmatrix} \cdot \left( \begin{pmatrix} \delbar_1 & 0 \\ \bar{\eta}_{-1} & \delbar_2 \end{pmatrix},   \xi \cdot \begin{pmatrix} \mu & \psi \\ \chi & -\mu \end{pmatrix} \right) \cdot \begin{pmatrix} \xi^{1/2} & 0 \\ 0 & \xi^{-1/2} \end{pmatrix} \\
            &= \left( \begin{pmatrix} \delbar_1 & 0 \\ \xi \bar{\eta}_{-1} & \delbar_2 \end{pmatrix},  \begin{pmatrix} \xi \mu & \psi \\ \xi^2 \chi & -\xi \mu \end{pmatrix} \right) \\
            &\xrightarrow{\xi \to 0} \left( \begin{pmatrix} \delbar_1 & 0 \\ 0 & \delbar_2 \end{pmatrix}, \begin{pmatrix} 0 & \psi \\ 0 & 0 \end{pmatrix} \right)
            .
    \end{split}
\end{equation}

In summary, the limit points $(\delbar_0, \phi_0)$ and $(\delbar_\infty , \phi_\infty)$ of a $\C^\times$-orbit have a canonical holomorphic line subbundle $\mathcal{L}_1$. If the Higgs field at this fixed point is nonzero then it is nothing but its kernel\footnote{Note that $\phi_\infty$ is never zero.}. This means that the procedure from the previous section can be canonically applied even to an entire $\CP^1 \subset N$, and the secondary Higgs bundle $(\delbar'_0, \Phi)$ associated to a fixed point agrees with the fixed point itself.

\begin{prop} \label{prop:SmoothNotHolomorphic}
Let $\mathcal{O} \simeq \CP^1 \subset \MH$ denote the closure of the $\C^\times$-orbit of a stable, nilpotent Higgs bundle that is not a fixed point of the $\C^\times$-action. Let
\begin{equation}
    \mathcal{S}: \mathcal{O} \to \MH, \hspace{1cm} (\delbar_E, \phi) \mapsto (\delbar'_E, \Phi) 
\end{equation}
denote the map that assigns the secondary Higgs bundle. Then $\mathcal{S}$ is smooth but not holomorphic.
\end{prop}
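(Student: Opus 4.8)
The plan is to prove smoothness and the failure of holomorphicity separately, using the explicit description of the secondary Higgs bundle worked out in the previous section. First I would set up the map $\mathcal{S}$ in concrete terms: along the orbit closure $\mathcal{O} \simeq \CP^1$, the calculations above show that at the parameter value $\xi$ the Higgs bundle $\xi \cdot (\delbar_E, \phi)$ has a canonical holomorphic subbundle $\mathcal{L}_1 = \ker(\xi\phi) = \ker(\phi)$, which is \emph{independent} of $\xi$. The secondary Higgs bundle is built by taking the $h$-orthogonal complement $L_2 = \mathcal{L}_1^{\perp_h}$ (which does depend on $\xi$, since the harmonic metric varies along the orbit), writing $D_\zeta = A_1 + D' + A_{-1}$ in the induced $C^\infty$-splitting, and setting $\Phi = A_{-1} + \phi$ with holomorphic structure $\delbar'_E = D'^{(0,1)}$. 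The two fixed points $\xi = 0, \infty$ are handled by the observation at the end of the section that there $\mathcal{S}$ returns the fixed point itself.

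For smoothness, I would argue that every ingredient depends smoothly on $\xi \in \CP^1$. The harmonic metric $h(\xi)$ for the stable Higgs bundle $\xi\cdot(\delbar_E,\phi)$ varies smoothly (indeed real-analytically) in $\xi$ by the smooth dependence of solutions of Hitchin's equation~\eqref{eq:HitchinEqn} on parameters; this is the standard smoothness of the non-abelian Hodge correspondence in families. Since $\mathcal{L}_1$ is fixed and $L_2 = \mathcal{L}_1^{\perp_{h(\xi)}}$ is determined by $h(\xi)$, the projection operators onto $L_1, L_2$ and hence the off-diagonal pieces $A_{\pm 1}$ and the diagonal part $D'$ all vary smoothly. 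Therefore $\Phi = A_{-1} + \phi$ and $\delbar'_E$ depend smoothly on $\xi$, and one must check that the limiting values at the two fixed points match smoothly — this follows because the explicit $\xi \to 0, \infty$ computations above show $A_{-1} \to 0$, so $\mathcal{S}$ extends continuously (and, after verifying the derivatives, smoothly) to all of $\CP^1$.

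The genuinely informative step is showing $\mathcal{S}$ is \emph{not} holomorphic, and here the main obstacle is to exhibit a concrete failure of the Cauchy--Riemann condition. The cleanest route is to trade on the dependence on the \emph{antiholomorphic} data. The key point is that $L_2 = \mathcal{L}_1^{\perp_h}$ and hence $A_{-1}$ are built from the harmonic metric $h$, which is not a holomorphic function of $\xi$; more structurally, $A_{-1}$ is extracted from the Chern connection via the metric adjoint, an antilinear operation. Concretely, in the splitting $E = L_1 \oplus L_2$ one has by unitarity of the Chern connection that $\eta_{-1} = A_{-1}$ is, up to the metric, the adjoint of $\bar\eta_1$; along the orbit $\bar\eta_1$ scales by $\xi^{-1}$ (as computed above), so its $h$-adjoint scales like $\bar\xi^{-1}$, producing an explicit $\bar\xi$-dependence in $\Phi$. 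I would make this precise by computing $\frac{\partial \mathcal{S}}{\partial \bar\xi}$ at a generic orbit point and showing it is nonzero, using that the Higgs bundle is not a fixed point (so $\bar\eta_1 \neq 0$, equivalently $A_{-1} \neq 0$ by Proposition~\ref{prop:SL2TwistorLine}). The subtlety to watch is that $h(\xi)$ itself also varies, so one cannot pretend only the explicit $\xi$-powers move; the honest statement is that the $\bar\xi$-antiholomorphic dependence entering through the metric adjoint cannot be cancelled, which is precisely why $\Phi$ fails to be a holomorphic function of $\xi$ even though for each fixed $\xi$ it defines a holomorphic Higgs bundle on $C$.
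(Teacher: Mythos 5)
Your smoothness argument is in substance the paper's: represent everything on a fixed unitary bundle, note that the harmonic metric and hence the orthogonal splitting $E = L_1 \oplus L_2$, the projections, and the pieces $A_{\pm 1}$, $D'$ all vary smoothly with the orbit parameter, and check the endpoints. That part is fine.

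The non-holomorphicity step is where your route has a genuine gap. You propose to compute $\partial \mathcal{S}/\partial\bar\xi$ at a generic orbit point by tracking the antiholomorphic dependence entering through the $h$-adjoint, and you correctly flag that $h(\xi)$ itself varies. But you then assert that ``the $\bar\xi$-dependence entering through the metric adjoint cannot be cancelled'' --- that assertion \emph{is} the statement to be proved, and nothing in your argument rules out the variation $\partial_{\bar\xi}h$ conspiring to cancel the explicit $\bar\xi$-factors. Making this precise would require differentiating the solution of Hitchin's equation in the parameter $\xi$, i.e.\ quantitative control of the harmonic metric along the orbit, which the paper itself points out (in the toy-model discussion) amounts to solving Hitchin's equations and is intractable. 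The paper avoids the local computation entirely with a soft global argument: if $\mathcal{S}$ were holomorphic, then $h\circ\mathcal{S}:\CP^1\to\mathcal{B}(C,2)=H^0(C,K_C^2)$ would be a holomorphic map from a projective curve to an affine space, hence constant; but $\mathcal{S}$ sends the two fixed points of $\mathcal{O}$ to nilpotent Higgs bundles (quadratic differential zero) and generic orbit points to non-nilpotent ones by Proposition~\ref{prop:SL2TwistorLine}, so $h\circ\mathcal{S}$ is non-constant. This is exactly the place where the hypothesis that $\mathcal{O}$ is the \emph{closed} orbit $\simeq\CP^1$ (and not just a $\C^\times$-family) does the work; your pointwise Cauchy--Riemann computation never uses it. I would replace your third paragraph with this Liouville-type argument.
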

\begin{proof}
First smoothness: Locally we may view the $(\delbar_E, \phi)$ as living on a fixed unitary bundle $E$. More precisely, the Higgs bundles can be represented by pairs $(D, \phi)$ where $D$ is a unitary connection on $E$ and $\phi$ is an adjoint-valued 1-form such that the self-duality equations are fulfilled. Since the modifications we described are smooth in the components of these pairs, the resulting Higgs field $(\delbar'_E, \Phi)$ will depend smoothly on the input.

Now assume that $\mathcal{S}$ is holomorphic. Then its composition with the Hitchin map (\ref{eq:HitchinFibration}) gives a holomorphic map 
\begin{equation}
    h \circ \mathcal{S} : \CP^1 \to \mathcal{B}(C,2) = H^0 (C, K_C^2).
\end{equation}
This is necessarily constant as a map from a projective curve to an affine space. But we have seen that $\mathcal{S}(\mathcal{O})$ contains both nilpotent and non-nilpotent Higgs bundles. \hfill
\end{proof} \\

\subsection{Nilpotent WKB}
The construction from the previous section allows us to analyze the asymptotics of families of flat connections as in~(\ref{eq:FamFlatCxns}) whose leading term is a nilpotent Higgs field through a related family with regular Higgs field, as least as long as the Higgs field is not coming from a $\C$-VHS. To this end, let $\gamma : I=[0,1] \to C$ be a parametrized path. Then flatness of a covariantly constant section $s$ with respect to the pullback connection $\gamma^\ast D_\epsilon$ means that
\begin{equation}
    \left( dt \otimes \frac{d}{dt} + \epsilon^{-1} \gamma^\ast \phi + \gamma^\ast A + \epsilon \gamma^\ast \psi \right) s(t) = 0,
\end{equation}
where $t$ is a local coordinate on $I$, $d+A$ is a local trivialization of $D$, and $s(t)$ is a section of $\gamma^\ast E$. For simplicity we may assume that $\phi$ has no zeroes or poles along $\gamma$ (since the zeroes and poles of $\phi$ are discrete, it suffices to homotope $\gamma$ slightly if necessary). 

We shall be interested in the (small $\epsilon$) asymptotics of flat sections for the family $D_\epsilon$, or equivalently the family~$
D'_\epsilon$. Let us assume that $\gamma^\ast (\phi + A_{-1})$ is a regular Higgs field with non-zero eigenvalues $\pm \lambda (t)$ along $\gamma$, such as the families coming from the conformal limit or real twistor lines. Under these assumptions, we can directly apply Proposition~\ref{prop:RegularWKB}. This proves:

\begin{thm} \label{thm:WKBInRank2}
Let $\phi$ be a nilpotent Higgs field such that its associated secondary Higgs field $\Phi$ is not nilpotent. Then the holonomy of $D_\epsilon$ along a WKB curve $\gamma$ grows exponentially in $\epsilon^{-1/2}$. More precisely, one has
\begin{equation}
\lim_{\epsilon \searrow 0} \left( \Tr \Hol_\gamma (D_\epsilon ) \cdot \exp (- \epsilon^{-1/2} Z_\gamma )\right) \in \C^\times,
\end{equation}
where $\Re (\epsilon^{-1/2} Z_\gamma ) > 0$.
\end{thm}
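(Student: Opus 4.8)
The plan is to reduce the nilpotent situation to the regular WKB result of Proposition~\ref{prop:RegularWKB} by passing to the gauge-transformed family $D'_\epsilon = D_{\epsilon^2}.g(\epsilon)$ of~\eqref{eq:NewFamFlatCxns}. The first observation is that the trace of the holonomy around a closed loop is invariant under gauge transformations: for any gauge $g$ one has $\Hol_\gamma(D.g) = g(p)^{-1}\Hol_\gamma(D)\,g(p)$ at the basepoint $p = \gamma(0) = \gamma(1)$, so
\begin{equation}
    \Tr\Hol_\gamma(D'_\epsilon) = \Tr\Hol_\gamma(D_{\epsilon^2}).
\end{equation}
Thus it suffices to understand the left-hand side, and by construction the leading term of $D'_\epsilon$ is $\epsilon^{-1}\Phi$ with $\Phi = A_{-1}+\phi$ the secondary Higgs field, which is a genuine (non-nilpotent, by hypothesis) stable Higgs field by Proposition~\ref{prop:NewHiggsField}.

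First I would record that, after replacing $\gamma$ by a small homotopic representative if necessary, $\gamma^\ast\Phi$ is zero-free with distinct eigenvalues $\pm\lambda(t)\,dt$, and that $\gamma$ is a WKB curve \emph{for} $\Phi$, i.e.\ $\Re\lambda(t) > 0$ along $\gamma$; this is exactly the hypothesis built into the statement, and it forces $\Re(Z_\gamma) > 0$ for the period $Z_\gamma := \int_0^1\lambda(t)\,dt$ from~\eqref{eq:Period}. Since $D'_\epsilon = \epsilon^{-1}\Phi + D' + O(\epsilon)$ has precisely the shape required by Proposition~\ref{prop:RegularWKB} (the connection part $D'$ being diagonal and the remaining terms collecting into positive powers of $\epsilon$), applying that proposition to $D'_\epsilon$ with parameter $\epsilon$ yields
\begin{equation}
    \lim_{\epsilon\searrow 0}\big(\Tr\Hol_\gamma(D'_\epsilon)\cdot\exp(-\epsilon^{-1}Z_\gamma)\big) = \Hol_\gamma(A_+) \in \C^\times.
\end{equation}

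The final step is a change of parameter. Combining the two displays above and writing $\tilde\epsilon = \epsilon^2$, so that $\epsilon^{-1} = \tilde\epsilon^{-1/2}$ and $\epsilon\searrow 0$ corresponds to $\tilde\epsilon\searrow 0$, gives
\begin{equation}
    \lim_{\tilde\epsilon\searrow 0}\big(\Tr\Hol_\gamma(D_{\tilde\epsilon})\cdot\exp(-\tilde\epsilon^{-1/2}Z_\gamma)\big) = \Hol_\gamma(A_+)\in\C^\times,
\end{equation}
which is the assertion after renaming $\tilde\epsilon$ to $\epsilon$; the positivity $\Re(\epsilon^{-1/2}Z_\gamma) = \epsilon^{-1/2}\Re(Z_\gamma) > 0$ is immediate since $\epsilon > 0$. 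The conceptual heart of the argument is that the nilpotent $\phi$ carries no spectral data to feed into WKB, and the gauge transformation~\eqref{eq:SL2GaugeMatrix} is precisely what promotes the off-diagonal term $A_{-1}$ (which measures the failure of $\ker\phi$ to be $D$-invariant) to leading order, manufacturing the regular Higgs field $\Phi$. The main point requiring care is therefore the applicability of Proposition~\ref{prop:RegularWKB} to $D'_\epsilon$: one must impose the WKB condition on $\Phi$ rather than on $\phi$, and --- for families such as the conformal limit, where the gauge transformation introduces further fractional or higher powers of $\epsilon$ --- one must check that these higher-order tails do not disturb the leading exponential asymptotics, which holds because the WKB estimate underlying Proposition~\ref{prop:RegularWKB} is governed entirely by the two leading terms $\epsilon^{-1}\Phi + D'$.
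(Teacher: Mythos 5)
Your proposal is correct and follows essentially the same route as the paper: pass to the gauge-equivalent family $D'_\epsilon = D_{\epsilon^2}.g(\epsilon)$ whose leading term is the regular secondary Higgs field $\Phi = \phi + A_{-1}$, apply Proposition~\ref{prop:RegularWKB} to it, and convert the resulting $\epsilon^{-1}$ asymptotics into $\epsilon^{-1/2}$ asymptotics for the original family via the substitution $\epsilon \mapsto \epsilon^2$ and gauge invariance of $\Tr\Hol_\gamma$. You have merely spelled out the reparametrization and gauge-invariance steps that the paper leaves implicit.
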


\begin{rmk}
In light of the previous Theorem it makes sense to ask about the existence of closed WKB curves on Riemann surfaces with a quadratic differential. We show in Appendix~\ref{app:WKBcurves} that they exist on any Riemann surface with a non-zero quadratic differential using the notion of \textit{(half-)translation surfaces}. We also give a few concrete examples of WKB curves where the quadratic differential is meromorphic.
\end{rmk}

As an application of Theorem~\ref{thm:WKBInRank2}, we show that the leaves $W^1_\alpha (\delbar_0, \phi_0)$ are closed inside $\MdR$:

\begin{cor} \label{cor:SimpsonConj}
Let $(\delbar_0, \phi_0) \in V_\alpha$ be a stable VHS. Then the leaf $W^1_\alpha (\delbar_0, \phi_0) \subset \MdR$ is a closed subspace.
\end{cor}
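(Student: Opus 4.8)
The plan is to prove that $W^1_\alpha(\delbar_0, \phi_0)$ is closed in $\MdR$ by contradiction, using the exponential growth established in Theorem~\ref{thm:WKBInRank2} as an obstruction. Suppose the leaf is not closed; since it is biholomorphic to an affine space $\mathbb{A}^m$ (as recalled in Section~\ref{sec:CL}), a failure of closedness means there is a sequence of points in $W^1_\alpha(\delbar_0, \phi_0)$ escaping to infinity in the leaf while converging in $\MdR$ to a limit point $[\nabla_\infty]$ lying outside the leaf. Following Simpson's heuristic, the goal is to produce from this escape a projective curve (or at least a non-constant proper holomorphic map from an affine line with controlled behavior) inside $\MdR$, contradicting the Stein property of $\MdR$. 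The role of Theorem~\ref{thm:WKBInRank2} is to show that along any such escaping family the traces of holonomies behave tamely enough to force the limiting object back into the stratum.

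First I would recall that each point of $W^1_\alpha(\delbar_0, \phi_0)$ arises, via the conformal limit identification $\CL_\hbar$, from a point of $W^0_\alpha(\delbar_0, \phi_0) \subset \MH$ whose $\C^\times$-orbit limits to the fixed point $(\delbar_0, \phi_0)$. Concretely, a divergent sequence in the leaf corresponds to a $\C^\times$-orbit of Higgs bundles degenerating toward the nilpotent cone, and the associated flat connections are exactly of the form $\nabla_\zeta$ in~(\ref{eq:RealTwistorLine}) (equivalently the conformal-limit family). Next I would invoke Theorem~\ref{thm:WKBInRank2}: for a WKB curve $\gamma$, the secondary Higgs field $\Phi$ controls the asymptotics, and
\[
\Tr \Hol_\gamma(\nabla_\zeta) \sim \exp(\zeta^{-1/2} Z_\gamma), \qquad \Re(Z_\gamma) > 0,
\]
so that the trace functions blow up along the orbit as $\zeta \searrow 0$. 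Because the trace of holonomy around $\gamma$ is a regular (algebraic) function on $\MdR$, any point of $\MdR$ in the closure of the escaping sequence would have to be a finite limit of these trace values; but the exponential blow-up shows the trace is unbounded along the escaping family. This forces the limit point to lie at infinity of $\MdR$ rather than at a genuine point, contradicting convergence in $\MdR$.

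The cleanest way to organize this is: $(i)$ reduce closedness to the statement that every sequence in the leaf converging in $\MdR$ stays in a compact part of the leaf; $(ii)$ parametrize the leaf by its $\C^\times$-orbit data so that escape to infinity in $\mathbb{A}^m$ is matched with $\zeta \searrow 0$ degeneration of the underlying Higgs data toward the nilpotent cone; $(iii)$ exhibit a WKB curve $\gamma$ with respect to the relevant secondary Higgs bundle (guaranteed to exist by Appendix~\ref{app:WKBcurves} once $\Phi$ is non-nilpotent, which holds away from fixed points by Proposition~\ref{prop:SL2TwistorLine}); and $(iv)$ conclude that the regular function $\Tr\Hol_\gamma$ on $\MdR$ is unbounded along the escaping sequence, contradicting the existence of a finite limit in $\MdR$. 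The statement is restricted to the open stratum precisely because there we can guarantee, via Proposition~\ref{prop:SL2TwistorLine}, that the secondary Higgs field is non-nilpotent and hence that WKB analysis applies with $\Re(Z_\gamma) > 0$.

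The main obstacle I anticipate is step $(ii)$: matching the analytic degeneration of the flat connection to escape in the affine coordinates of the leaf, and ensuring that a single WKB curve $\gamma$ works uniformly along the entire escaping sequence rather than one that must be chosen pointwise. One must check that the period $Z_\gamma$ stays bounded away from the imaginary axis (so that $\Re(Z_\gamma) > 0$ persists) as the secondary Higgs bundle varies along the family, and that the holonomy trace is genuinely a globally defined regular function on $\MdR$ whose unboundedness is incompatible with convergence in the analytic topology. Making the connection between ``escape to infinity in $\mathbb{A}^m$'' and ``$\zeta \searrow 0$ limit toward the nilpotent cone'' fully rigorous — rather than heuristic — is where the real work lies, and it is likely where one leans most heavily on the explicit parametrization of Proposition~\ref{prop:ParametrizationStableMfld} together with the biholomorphism furnished by the conformal limit.
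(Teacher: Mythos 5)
Your overall strategy is the one the paper uses: transport the problem to $W^0$ via the conformal limit biholomorphism, identify escape to infinity in the leaf with the degeneration of the family as its parameter goes to $0$, and use Theorem~\ref{thm:WKBInRank2} together with the existence of WKB curves (Proposition~\ref{prop:ExistenceWKBCurves}) to show that $\Tr\Hol_\gamma$, a continuous function on $\MdR$, is unbounded along the escaping family, contradicting convergence. However, you have left open exactly the step that carries the weight of the paper's proof, and you say so yourself: the passage from ``an arbitrary sequence $(D_n)\subset W^1$ converging to a boundary point $D_\infty$'' to ``a sequence lying on a single $\R_{>0}$-ray, i.e.\ of the form $\CL_{\hbar/r_n}(\delbar_E,\phi)$ with $r_n\to\infty$.'' Without this reduction you cannot apply Theorem~\ref{thm:WKBInRank2}, which is an asymptotic statement about a one-parameter family with fixed Higgs data, not about an arbitrary divergent sequence in the leaf; your worry about whether a single WKB curve and a single period $Z_\gamma$ with $\Re(Z_\gamma)>0$ can be chosen uniformly is precisely the symptom of this missing reduction. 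The paper closes the gap with a short compactness argument: using Proposition~\ref{prop:ActionOnCLFamily}, the $\R_{>0}$-action on $W^0$ induces a free $\R_{>0}$-action on $W^1-\{b\}$ (with $b=\CL_\hbar(\delbar_0,\phi_0)$ the base point), whose quotient $\mathbb{S}$ is a compact ``sphere of directions''; passing to a subsequence whose directions converge to some $b_\infty\in\mathbb{S}$, one replaces $(D_n)$ by a sequence on the single ray $q^{-1}(b_\infty)$ with the same limit, and on that ray the WKB asymptotics apply verbatim.

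A second, smaller point: the leaf $W^1$ is parametrized by conformal-limit families, not by real twistor lines, so the non-nilpotency of the secondary Higgs field needed to invoke Theorem~\ref{thm:WKBInRank2} comes from Proposition~\ref{prop:SL2CL}, not Proposition~\ref{prop:SL2TwistorLine} as you cite. This is also why the result is confined to the open stratum: Proposition~\ref{prop:SL2CL} requires the underlying vector bundle to be stable, which holds for points of $W^0_\alpha(\delbar_0,0)$ since there $\delbar_E=\delbar_0$, but is not known in general (see the remark following that proposition).
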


\begin{proof}
To ease notation we write $W^i = W^i_\alpha (\delbar_0, \phi_0)$ ($i=0,1$). Recall that the $\hbar$-conformal limit of a Higgs bundle $(\delbar_E, \phi) \in W^0$ is given by
\begin{equation}
    \mathcal{CL}_\hbar (\delbar_E, \phi) = \hbar^{-1} \phi + \delbar_E + \partial_0 + \hbar \phi_0 ^{\dagger_{h_0}},
\end{equation}
and that $\mathcal{CL}_\hbar$ gives a biholomorphism $W^0 \to W^1$. Indeed, both are affine spaces and the $\C^\times$-action on the former induces a $\C^\times$-action on the latter which by Proposition~\ref{prop:ActionOnCLFamily} is for $r \in \mathbb{R}_{>0} \subset \C^\times$ given by
\begin{equation}
   \mathcal{CL}_\hbar (r \cdot (\delbar_E, \phi)) = \mathcal{CL}_{\hbar/r} (\delbar_E, \phi).
\end{equation}
Assume that $W^1$ is not closed, we claim that any boundary point $D_\infty$ must necessarily be the limit of a sequence of points on a single ray, say
\begin{equation} \label{eq:BoundaryRay}
    D_\infty = \lim_{n \to \infty} \mathcal{CL}_{\hbar/r_n} (\delbar_E, \phi),
\end{equation}
where $r_n \to \infty$ as $n \to \infty$. But Theorem~\ref{thm:WKBInRank2} and Proposition~\ref{prop:ExistenceWKBCurves} then imply that the trace of the holonomy of this sequence is unbounded, giving the desired contradiction.

It remains to show (\ref{eq:BoundaryRay}) for which we proceed as follows. Let $(D_n)_n \subset W^1$ be a sequence converging to $D_\infty$ and let $b = \mathcal{CL}_\hbar (\delbar_0, \phi_0)$ be a base point for the affine space $W^1$ such that without loss of generality $b \notin ( D_n )$. The $\C^\times$-action on $W^1$ restricts to a $\mathbb{R}_{>0}$-action that is free on $W^1 - \{ b \}$, let $\mathbb{S}$ be the quotient by this action. $\mathbb{S}$ should be viewed as the sphere of directions of rays emanating from $b$, in particular it is compact.

Let $b_n$ be the image of $D_n$ under the quotient map $q: W^1 - \{ b \} \to \mathbb{S}$, then by compactness of $\mathbb{S}$ there is a convergent subsequence $( b'_n )$ of $( b_n )$ whose limit is $b_\infty \in \mathbb{S}$. Let $(D'_n)$ be the corresponding subsequence of $D_n$, then there is a sequence $(C_n) \subset q^{-1} (b_\infty)$ such that $\lim_n C_n = \lim_n D'_n = D_\infty$.
\hfill
\end{proof} \\

\begin{comment}
\begin{rmk}
Of course, both the Theorem and its Corollary depend on the existence of a WKB curve. In general, we do not know of a satisfying criterion to check whether a given Riemann surface $C$ together with a quadratic differential $q_2$ has a WKB curve. We have constructed in Appendix~\ref{app:WKBcurves} a few families of examples using the language of half-translation surfaces. More generally speaking, the asymptotic behaviour described by Proposition~\ref{prop:RegularWKB} is expected to remain true even in the absence of a WKB curve. This is an active area of research (\cite{IwakiNakanishiI},\cite{KoikeSchafke},\cite{NikitaFuture}) and the particular statement we require has to the best of our knowledge not yet appeared in the literature.
\end{rmk}
\end{comment}

\subsection{A twistor space interpretation}
Let us take a closer look at the previous construction for the case that the family of flat connections is the twistor family
\begin{equation}
    \nabla_\zeta = \zeta^{-1} \phi + D_h + \zeta \phi^\dagger
\end{equation}
for a nilpotent Higgs field $(\delbar_E, \phi)$ that is not a $\C^\times$-fixed point. As the name suggests, this family is a real holomorphic section of the \textit{Deligne-Hitchin moduli space}, the twistor space $\MDH \to \CP^1$ of the hyperk\"ahler Hitchin system. More accurately, $\nabla_\zeta$ describes the section over $\zeta \in \C^\times$ but the section extends over $\zeta = 0$ in the sense of $\lambda$-connections, and by reality also over $\zeta = \infty$. The normal bundle to the section is isomorphic to $\mathcal{O}_{\CP ^1}(1)^d$ (where $d = \dim \MH$).

As we have just seen, associated to such a family is a new family of connections
\begin{equation} \label{eq:NotATwistorLine}
    \nabla'_\zeta = \zeta^{-1} \Phi + D' + \zeta \Psi \sim \nabla_{\zeta^2}
\end{equation}
where $D'$ is a diagonal connection and $\Psi$ is not necessarily the adjoint of $\Phi$. Indeed, (\ref{eq:NotATwistorLine}) describes a section of twistor space over $\C^\times \subset \CP^1$ and it is reasonable to ask whether the section can be extended over $0$ and $\infty$, and whether the section has the properties of a real twistor line.

The answer to the former question is affirmative by Proposition~\ref{prop:NewHiggsField}: The pair $(\delbar'_E, \Phi)$ defines a stable Higgs bundle showing that the section extends over $0$. The analogous statement holds for the pair $(\partial'_E, \Psi)$ which forms a stable Higgs bundle on $\bar{C}$, hence showing that the section extends to $\infty$. The latter question is answered in the following Proposition:
\begin{prop} \label{prop:NotATwistorLine}
The section $\nabla'_\zeta$ is not real, i.e.\ $\overline{\nabla'_{-\bar{\zeta}^{-1}}}$ and $\nabla'_\zeta$ are not gauge-equivalent.
\end{prop}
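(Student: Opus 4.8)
The plan is to argue by contradiction: assume $\nabla'_\zeta$ is real and show that this would force $(\delbar_E,\phi)$ to be a $\C^\times$-fixed point, contrary to hypothesis. The crux is a sign appearing in the unitarity relation for the Chern connection $D_h$, which cannot be reconciled with the positivity of a harmonic metric.

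First I would reduce reality to the adjoint form. If $\nabla'_\zeta$ is real, then being a holomorphic section of $\MDH$ of twistor-line shape passing through the irreducible flat connection $\nabla'_1$ (note $\nabla'_1 \sim \nabla_1$, irreducible by stability), it must be the real twistor line of the stable Higgs bundle $(\delbar'_E,\Phi)$ produced in Proposition~\ref{prop:NewHiggsField}. By the uniqueness of the harmonic decomposition in the Non-abelian Hodge Theorem (Theorem~\ref{thm:NAHT}), there is then a harmonic metric $h'$ for $(\delbar'_E,\Phi)$ with
\[
 \nabla'_\zeta = \zeta^{-1}\Phi + D_{h'} + \zeta\,\Phi^{\dagger_{h'}},
\]
so that matching coefficients against $\nabla'_\zeta = \zeta^{-1}\Phi + D' + \zeta\Psi$ yields $D' = D_{h'}$ and $\Psi = \Phi^{\dagger_{h'}}$. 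Since the underlying holomorphic bundle $\mathcal{E}' = \mathcal{L}_1 \oplus \mathcal{L}_2$ is diagonal and $D'$ is the diagonal connection of~(\ref{eq:CxnSplitting}), the Chern connection $D_{h'}=D'$ being diagonal forces $h'$ to be diagonal with respect to $E = L_1 \oplus L_2$; I write $h=\diag(h_1,h_2)$ and $h'=\diag(h'_1,h'_2)$.

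Next I would compute $\Psi$ explicitly and extract the contradiction. With respect to $E = L_1 \oplus L_2$ the field $\phi$ is strictly upper triangular and $A_{-1}$ strictly lower triangular, and unitarity of $D_h$ gives the second-fundamental-form relation $A_1 = -A_{-1}^{\dagger_h}$ (the sign being exactly the anti-Hermitian condition on the connection form in a unitary frame). Hence
\[
 \Psi = \phi^{\dagger_h} + A_1 = \phi^{\dagger_h} - A_{-1}^{\dagger_h} = (\phi - A_{-1})^{\dagger_h},
\]
whereas reality demands $\Psi = \Phi^{\dagger_{h'}} = (\phi + A_{-1})^{\dagger_{h'}}$. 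Comparing the two off-diagonal blocks, and using that adjunction of a map between the line bundles $L_i$ rescales by the relevant ratio of the (positive) metric components, the $\phi$-block gives $h_1/h_2 = h'_1/h'_2$, while the $A_{-1}$-block gives $h'_2/h'_1 = -\,h_2/h_1$. The latter is impossible for positive-definite metrics unless $A_{-1}=0$.

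Finally, $A_{-1}\neq 0$ precisely because $(\delbar_E,\phi)$ is not a $\C^\times$-fixed point, by the argument of Proposition~\ref{prop:SL2TwistorLine} (non-vanishing of $\bar\eta_1$, hence of $\eta_{-1}=A_{-1}$ by unitarity of $D_h$). This contradiction shows $\nabla'_\zeta$ cannot be real. The step I expect to be most delicate is the reduction in the first paragraph: justifying that a real section of this particular shape is genuinely the NAHT twistor line of $(\delbar'_E,\Phi)$ rather than some other real section, and carefully tracking the two distinct metrics $h$ and $h'$ so that the sign in $A_1 = -A_{-1}^{\dagger_h}$ survives the comparison. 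An alternative that bypasses any classification of real sections is to expand the reality condition $\overline{\nabla'_{-\bar\zeta^{-1}}} = k^{-1}\nabla'_\zeta k$ directly in powers of $\zeta$ and read off the same adjoint relations together with the offending sign.
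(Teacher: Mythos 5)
Your block computation in the second paragraph does isolate the correct obstruction --- the relative sign between $\phi$ and $A_{-1}$ in $\Psi=(\phi-A_{-1})^{\dagger_h}$ versus $\Phi=\phi+A_{-1}$ is exactly what the paper's proof exploits --- but the reduction you rest it on is a genuine gap. You assume that a real holomorphic section of the shape $\zeta^{-1}\Phi+D'+\zeta\Psi$ through the irreducible point $\nabla'_1$ must be \emph{the} NAHT twistor line of $(\delbar'_E,\Phi)$, so that $\Psi=\Phi^{\dagger_{h'}}$ for a positive-definite harmonic metric $h'$. Reality alone does not give you this: expanding $\overline{\nabla'_{-\bar\zeta^{-1}}}=\nabla'_\zeta.\eta$ only produces a gauge transformation (equivalently a parallel sesquilinear pairing) with $-\overline{\Psi}=\eta^{-1}\Phi\eta$ and $-\overline{\Phi}=\eta^{-1}\Psi\eta$; when that pairing is indefinite one lands on Hitchin's ``wrong sign'' equations $F_D-[\phi,\phi^{\dagger}]=0$ rather than on a harmonic metric, and this is precisely the subtlety the paper flags in the remark immediately following the proposition. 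The Biswas--Heller--R\"oser families cited there are real sections of exactly this shape that do \emph{not} arise from solutions of Hitchin's equations, so the appeal to NAHT uniqueness is unavailable unless you first rule out the indefinite case --- which your argument never does. Your closing ``alternative'' inherits the same problem: from the raw gauge equivalence you may read off that $\Phi$ is conjugate to $-\overline{\Psi}$, but not any adjoint relation with respect to a metric.

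The paper's proof sidesteps the classification of real sections entirely, and the repair of your argument amounts to adopting it: conjugacy of $\Phi$ and $-\overline{\Psi}$ forces equality of any conjugation invariant, in particular of determinants (equivalently of the quadratic differentials, since $\det M=-\tfrac12\Tr(M^2)$ for traceless $M$). In the unitary frame for $E=L_1\oplus L_2$ one has $\det\Phi=-\psi\,\omega$, while $\det\bigl(-\overline{\Psi}\bigr)=\overline{\omega^\ast\psi^\ast}=\psi\,\omega$; these differ by a sign and are nonzero because $\psi\neq 0$ ($\phi$ is a nonzero Higgs field) and $\omega=A_{-1}\neq 0$ ($(\delbar_E,\phi)$ is not a $\C^\times$-fixed point, by Proposition~\ref{prop:SL2TwistorLine}). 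This single gauge-invariant comparison kills both the definite and the indefinite case at once, whereas your metric-ratio argument, even once the reduction were justified, handles only the definite one.
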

\begin{proof}
We use once again the $C^\infty$-splitting $E = L_1 \oplus L_2$ with respect to which
\begin{equation}
    \phi = \begin{pmatrix} 0 & \psi \\ 0 & 0 \end{pmatrix}, \hspace{0.5cm} D = \begin{pmatrix} D_1  & -\omega^\ast \\ \omega & D_2 \end{pmatrix}, \hspace{0.5cm} \phi^\dagger = \begin{pmatrix} 0 & 0 \\ \psi^\ast & 0 \end{pmatrix}.
\end{equation}
After the gauge transformation one obtains
\begin{equation}
    \Phi = \begin{pmatrix} 0 & \psi \\ \omega & 0 \end{pmatrix}, \hspace{0.5cm} D' = \begin{pmatrix} D_1  & 0 \\ 0 & D_2 \end{pmatrix}, \hspace{0.5cm} \Psi = \begin{pmatrix} 0 & -\omega^\ast \\ \psi^\ast & 0 \end{pmatrix}.
\end{equation}
The question is whether there is a gauge transformation $\eta(\zeta)$ such that
\begin{align} \begin{split}
   \overline{\nabla'_{-\bar{\zeta}^{-1}}} &= -\zeta^{-1} \overline{\Psi} + \overline{D'} - \zeta \overline{\Phi} \\
   &\stackrel{!}{=} \zeta^{-1} \eta^{-1}\Phi \eta + D'.\eta + \zeta \eta^{-1}\Psi \eta  = \nabla'_\zeta . \eta, \end{split}
\end{align}
or equivalently
\begin{equation}
    -\overline{\Psi} = \eta^{-1} \Phi \eta, \hspace{0.5cm} \overline{D'} = D' . \eta, \hspace{0.5cm} -\overline{\Phi} = \eta^{-1} \Psi \eta.
\end{equation}
However, this is impossible since the determinants of $\overline{\Psi}$ and $\Phi$ differ by a sign. \hfill
\end{proof} \\

\begin{rmk}
Hitchin~\cite{HitchinWrongSign} considered solutions to a signed version of the self-duality equations
\begin{align}
    \delbar_E \phi &= 0,\\
    F_D - [\phi, \phi^\dagger ] &= 0
\end{align}
Such a solution corresponds to a family of flat connections
$\zeta^{-1} \phi + D - \zeta \phi^\dagger$. Certain nilpotent Higgs bundles $(\delbar_E, \phi)$ form solutions to these equations and one could apply a similar strategy as ours to them by using a gauge transformation to produce a new section whose leading term is a non-nilpotent Higgs field. This idea was applied by Biswas, Heller and R\"oser~\cite[Theorem 3.4]{BHR} to produce a new family which they have shown to be a real twistor line. However, it fails a different property that families arising from solutions to Hitchin's equations have, which the authors have called $\tau$\textit{-negativity} (where $\tau$ denotes the involution $\zeta \mapsto -\overline{\zeta}^{-1}$ of $\CP^1$ which is part of the definition of \textit{reality}).
\end{rmk}
%\newpage
\section{The higher rank case} \label{sec:SLN}
In this section we want to generalize the previous discussion to Higgs bundles for the group $G = \SL (n, \C)$. We will restrict our attention to the case of a real twistor line arising from solution of Hitchin's equations so that the starting point is the family of flat connections
\begin{equation} \label{eq:TwistorLineSLN}
    \nabla_\zeta := \zeta^{-1} \phi + D_h + \zeta \phi^{\dagger_h}
\end{equation}
with a nilpotent $\SL (n, \C)$-Higgs field $\phi$. We will suppress the index $h$ from now on. The first difference from the $\SL (2)$-case is that the necessary gauge transformation depends on the \textit{type} of $\phi$, i.e.\ the size of its Jordan blocks. Recall that a nilpotent Higgs bundle $(\delbar_E, \phi)$ defines a filtration by holomorphic subbundles
\begin{equation} 
    0 \subset \ker \phi \subset \ker (\phi^2) \subset \cdots \subset \ker(\phi^k) = \mathcal{E},
\end{equation}
where $k \leq n$ is the smallest integer such that $\phi^k = 0$. Denote this filtration by $\mathcal{E}_i := \ker (\phi^i)$ ($i = 0, .. , k)$, and the pieces of the associated graded by $V_i := \mathcal{E}_i/\mathcal{E}_{i-1}$ ($i=1,\dots,k)$. Unless $(\delbar_E, \phi)$ is a fixed point, there is no holomorphic isomorphism $E = \oplus V_i$, so we will work with the $C^\infty$-splitting coming from picking orthogonal complements with respect to the harmonic metric. This data determines an ordered partition of $n$ via
\begin{equation}
    (\dim V_1, \dots , \dim V_k)
\end{equation}
which we call the \textit{type of} $(\delbar_E, \phi)$. Notice that its transpose partition encodes the size of the Jordan blocks of $\phi$ in decreasing order. On this note, let us recall that the Jordan normal form of $\phi$ is constant on a dense open subset of the connected Riemann surface $C$:

\begin{lem}
Let $\phi \in H^0 (C, \mathrm{ad} P \otimes K_C)$ be nilpotent. Then its Jordan type is the same on all of $C$ with the possible exception of finitely many points.
\end{lem}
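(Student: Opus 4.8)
The plan is to show that the Jordan type of a nilpotent $\phi \in H^0(C, \mathrm{ad}\,P \otimes K_C)$ is upper semicontinuous in the appropriate sense and jumps only on a proper analytic (hence finite) subset. The key numerical invariants are the ranks $r_j(p) := \rk(\phi^j_p)$ of the powers of $\phi$ at a point $p \in C$, for $j = 1, \dots, n-1$; the Jordan type is completely determined by the tuple $(r_1(p), \dots, r_{n-1}(p))$, since the number of Jordan blocks of size at least $i$ equals $r_{i-1}(p) - r_i(p)$. Thus it suffices to show that each $r_j$ is constant away from a finite set.

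First I would work in a local holomorphic trivialization of $E$ over a coordinate chart, so that $\phi = M(z)\,dz$ where $M(z)$ is an $n \times n$ matrix of holomorphic functions (the nilpotency $\phi^k = 0$ forces $M(z)^k \equiv 0$ identically). For each $j$, the entries of $M(z)^j$ are holomorphic, and $r_j(p) = \rk M(z_p)^j$ is the largest size of a non-vanishing minor of $M(z)^j$. Since these minors are holomorphic functions on the chart, $r_j$ is lower semicontinuous and takes its generic (maximal) value on the complement of the common zero locus of all maximal-size minors. The crucial point is that this generic value is constant across the whole connected surface: the locus where $r_j$ drops below its generic value is cut out locally by the vanishing of finitely many holomorphic functions, so it is either all of the chart or a proper analytic subset (a discrete set of points). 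I would then use connectedness of $C$ together with the identity theorem to rule out the possibility that $r_j$ is everywhere non-generic on some chart while generic elsewhere: if the maximal minors of $M^j$ vanished identically on an open set, they would vanish identically on $C$ by analytic continuation, contradicting that $r_j$ attains a strictly larger value somewhere.

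Concretely, I would define, for each $j$, the generic rank $r_j^{\mathrm{gen}} := \max_{p \in C} r_j(p)$, and let $Z_j \subset C$ be the set where $r_j(p) < r_j^{\mathrm{gen}}$. On any chart, $Z_j$ is the zero set of the collection of $r_j^{\mathrm{gen}} \times r_j^{\mathrm{gen}}$ minors of $M(z)^j$; since at least one such minor is not identically zero on $C$ (by definition of $r_j^{\mathrm{gen}}$ and connectedness), $Z_j$ is a proper analytic subset of $C$, hence discrete, hence finite by compactness of $C$. Taking the union over the finitely many indices $j = 1, \dots, n-1$ gives a finite exceptional set $Z = \bigcup_j Z_j$ off which all the $r_j$, and therefore the full Jordan type, are constant.

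The main obstacle I anticipate is purely bookkeeping rather than conceptual: making sure the argument is independent of the chosen trivialization, since $\phi$ is a section of $\mathrm{ad}\,P \otimes K_C$ and changing trivialization conjugates $M(z)$ and rescales by the transition function of $K_C$. Conjugation preserves all ranks $r_j$, and the $K_C$-twisting multiplies $M^j$ by a nowhere-vanishing holomorphic factor, so it affects neither the ranks nor the vanishing loci of minors; hence the invariants $r_j(p)$ and the sets $Z_j$ are globally well-defined on $C$. Once this invariance is in hand, the semicontinuity-plus-connectedness argument closes the proof with no further difficulty.
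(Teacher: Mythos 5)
Your proof is correct, but it takes a genuinely different route from the paper's. You reduce the Jordan type to the tuple of ranks $r_j(p)=\rk(\phi_p^j)$, observe that each $r_j$ is lower semicontinuous because it is governed by the non-vanishing of holomorphic minors, and then use connectedness plus the identity theorem to conclude that the drop locus $Z_j$ is a proper analytic subset, hence discrete, hence finite by compactness of $C$. The paper instead works over the field $\C(U)$ of meromorphic functions on a chart $U$: it conjugates $\phi$ into its Jordan normal form $\psi = g^{-1}\phi g$ with $g \in \GL(n,\C(U))$, notes that $g$ and $g^{-1}$ have only finitely many poles in $U$, and argues that away from these poles the entries of $\psi$ are holomorphic and discrete-valued (being $0$'s and $1$'s), hence locally constant. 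Your approach is more elementary and arguably more robust: the sets $Z_j$ are intrinsically defined (you correctly address the trivialization issue, since conjugation preserves ranks and the $K_C$-twist is by a nowhere-vanishing factor), so the globalization over $C$ is automatic, whereas the paper's argument is chart-local and needs a footnote to handle poles accumulating at chart boundaries, plus an implicit patching step over a finite cover. What the paper's argument buys is brevity and a slightly stronger conclusion on each chart, namely an explicit meromorphic conjugation to the constant Jordan form, not merely constancy of the type. Both proofs are complete for the stated lemma.
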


\begin{proof}
It suffices to show this over a coordinate chart $U$, which we may take to be an open disk. Then $(\mathrm{ad}P \otimes K_C)|_U$ trivializes and $\phi$ defines a nilpotent matrix with values in $\C (U)$, meromorphic functions on $U$. $\phi$ can be conjugated into its Jordan normal form $\psi$, 
\begin{equation}
    \psi = g^{-1} \phi g,
\end{equation}
where $g \in \GL(n, \C (U) )$. There is a finite set of points $D \subset U$ where an entry of $g$ or $g^{-1}$ has a pole\footnote{This is true even if there is a pole at $z \in \partial U$ which one can see by going to a different chart $U'$ containing $z$ in its interior.}, so the entries of $\psi$ are holomorphic on $U - D$. Because $\psi$ is the Jordan normal form of a nilpotent matrix, the entries are also discrete, hence constant on $U-D$. \hfill \end{proof} \\

\subsection{Higgs fields of maximal type (1,1,\dots,1)} \label{subsec:MaxType}
Let us start with the case that $\phi$ is of type $(1,1,\dots,1)$, i.e.\ its Jordan form is a single Jordan block of maximal size
\begin{equation} \label{eq:MaxJNF}
     \phi = \begin{pmatrix} 0 & \phi_{1,2} & & \\  & 0 & \ddots & \\ & & \ddots & \phi_{n-1,n} \\ & & & 0 \end{pmatrix},
\end{equation}
with respect to the canonical $C^\infty$-splitting $E = \oplus V_i$, where $\phi_{i,i+1}: V_{i+1} \rightarrow V_i \otimes \Omega^1$ are nonzero. We decompose the connection $D$ with respect to this grading as 
\begin{equation} \label{eq:DecompCxn}
    D = \sum_{i,j=1}^{n} A_{i,j}
\end{equation}
where $A_{i,i}$ is a (unitary) connection on the line bundle $V_i$ while $A_{i,j}$ is a section of $\Hom (V_j, V_i) \otimes \Omega^1$ when $i \neq j$.
It will also be convenient to group these together according to their distance to the diagonal, so we let 
\begin{equation}
    A_k := \sum_i A_{i,i+k}.
\end{equation}
For example, $\nabla' := A_0$ is just the diagonal part of $\nabla$, the direct sum of connections on the line bundles $V_i$. We could similarly decompose $\phi^\dagger = \sum_{i,j} \psi_{i,j} = \sum_k \psi_k$ but in our unitary frame this simplifies to $\phi^\dagger = \psi_{-1}$. We construct a similar gauge transformation as in the rank $2$ case,
\begin{equation} \label{eq:ChangeOfFrame}
    g_n (\zeta) := \diag ( \zeta^{(1-n)/2}, \zeta^{(3-n)/2}, \dots , \zeta^{(n-1)/2} ),
\end{equation}
to construct a family related to~(\ref{eq:TwistorLineSLN}) whose leading term is a regular Higgs bundle. First, notice that (\ref{eq:ChangeOfFrame}) yields a well-defined gauge transformation: Indeed, if $n$ is odd then it only contains integer powers, while for even $n$ the ambiguity in picking a square-root contributes an overall sign to $g_n$, which is central in $\mathrm{SL}(n, \C)$. The gauge transformation acts via conjugation on all pieces of $\nabla_\zeta$ and scales the entries according to their distance from the diagonal:
\begin{equation} \label{eq:DefAk}
    A_k . g_n(\zeta) = \zeta^{k} A_k,
\end{equation}
while $\phi. g_n = \zeta \phi$ and $\phi^\dagger . g_n = \zeta^{-1} \phi^\dagger$. It therefore transforms the family as
\begin{equation}
    \nabla'_\zeta := \nabla_{\zeta^n}.g_n(\zeta) = \zeta^{1-n} (\phi + A_{1-n}) + \zeta^{2-n} A_{2-n} + \dots + \nabla' + \zeta A_1 + \dots + \zeta^{n-1} (A_{n-1} + \phi^\dagger),
\end{equation}
but the leading term is only regular if $A_{1-n} \neq 0$ which is not necessarily true. We let $m \leq n$ be the largest integer for which $A_{1-m} \neq 0$, and remark that $m > 1$ if and only if $(\delbar_E, \phi)$ is not a fixed point which follows directly from Proposition~\ref{prop:FixedPoints} and unitarity of the Chern connection. We will assume $m>1$ from now on and define
\begin{equation} \label{eq:NewFamilySLn}
    \nabla'_\zeta := \nabla_{\zeta^{m}}.g_n(\zeta) = \zeta^{1-m} (\phi + A_{1-m}) + \zeta^{2-m} A_{2-m} + \dots + \nabla' + \dots \zeta^{m-1} (A_{m-1} + \phi^\dagger) + \dots.
\end{equation}
%Notice that we have used $\nabla_{\zeta^{m}}$ so that $\nabla'_\zeta$ is single-valued in $\zeta$. In other words, we have parametrized $\nabla'$ after precomposing with $\zeta \mapsto \zeta^{m}$ on $\C^\times$ or $\CP^1$. 
As in the rank $2$ case, the constant term is diagonal and therefore its $(0,1)$-part $\delbar'_E$ defines a holomorphic structure on $E$ which turns the line bundles $E_i$ into holomorphic line subbundles. Let $\Phi := \phi + A_{1-m}$ and $\zeta_m = \exp (2\pi i/m)$, then 
\begin{equation}
    \Phi.g_n(\zeta_m) = \zeta_m \phi + \zeta_m^{1-m} A_{1-m} = \zeta_m \Phi,
\end{equation}
i.e.\ $(\delbar'_E, \Phi)$ is fixed under a cyclic subgroup of order $m$. These bundles have been studied previously~(e.g.\ \cite{Simpson09,CollierThesis}) and are called $m$-cyclic Higgs bundles:
\begin{deff}
An $m$-cyclic $\SL(n, \C)$-Higgs bundle is an $\SL(n, \C)$-Higgs bundle $(\delbar_E, \phi)$ of the special form
\begin{equation}
    \mathcal{E} = \mathcal{E}_1 \oplus \dots \oplus \mathcal{E}_k, \hspace{0.5cm} \phi = \left( \phi_{i,j} \right)_{i,j=1}^k,
\end{equation}
such that $\mathcal{E}_i$ are holomorphic vector bundles, and $\phi_{i,j} \in H^0(C, \Hom(\mathcal{E}_{j}, \mathcal{E}_i) \otimes K_C)$ such that $\phi_{i,i+1}$ ($i=1,..,k-1$) are required to be nonzero, $\phi_{m+i-1,i}$ ($i=1, .. ,k+1-m$) are allowed to be nonzero, and $\phi_{i,j} = 0$ otherwise.
\end{deff}
These fixed points under the subgroup $\mu_m := \langle \zeta_m \rangle \subset \C^\times$ are a generalization of variations of Hodge structure, in the sense that the underlying bundle still carries a $\mathbb{Z}$-grading, while the Higgs field is not necessarily strictly monotonic with respect to the grading but is allowed to form cycles of length $m$. A direct consequence of this structure is that the image of $\Phi$ under the Hitchin map is rather special: Since the only cycles are of length $m$ (or multiples thereof), the only nonzero holomorphic differentials associated to $\Phi$ (with respect to a homogeneous basis of $\C [\mathfrak{sl}(n)]^{\SL(n)}$ such as $p_k(X) = \Tr (X^k)$) are the $m$-, $2m$-, $\dots$, $\lfloor n/m \rfloor m$-differentials.
\begin{prop} \label{prop:SecondaryHiggsFieldSLn}
The pair $(\delbar'_E, \Phi)$ defines a stable Higgs bundle. If $(\delbar_E, \phi)$ is not a $\C^\times$-fixed point, then $(\delbar'_E, \Phi)$ is not nilpotent.
\end{prop}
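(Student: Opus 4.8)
The plan is to verify three things in turn: that $(\delbar'_E,\Phi)$ is an $\SL(n,\C)$-Higgs bundle, that it is not nilpotent, and that it is stable; the last is where the real work lies.

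\emph{Type and holomorphicity of $\Phi$.} First I would note that $\Phi=\phi+A_{1-m}$ is of type $(1,0)$: $\phi$ is of type $(1,0)$ by hypothesis, and $A_{1-m}$ is the distance-$(1-m)$ part of the Chern connection $D$, i.e.\ it is strictly lower triangular with respect to the grading $E=\bigoplus_iV_i$. Since $\delbar_E=D^{(0,1)}$ preserves the holomorphic filtration $\ker\phi\subset\ker(\phi^2)\subset\cdots$, it has no components of negative distance, so its distance-$(1-m)$ part vanishes; hence $A_{1-m}^{(0,1)}=0$ and $\Phi$ has type $(1,0)$ (this is the higher-rank echo of the vanishing $\bar{\eta}_{-1}=0$ used in Proposition~\ref{prop:SL2TwistorLine}). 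To obtain $\delbar'_E\Phi=0$ I would expand the curvature of the flat family $\nabla'_\zeta$ from~(\ref{eq:NewFamilySLn}) in powers of $\zeta$, as in the Lemma around~(\ref{eq:GenHitchinEqns}). Bookkeeping of exponents shows that the coefficient of the lowest power $\zeta^{1-m}$ is exactly the covariant derivative $\nabla'\Phi$: the only wedge products of two terms of $\nabla'_\zeta$ whose $\zeta$-exponents sum to $1-m$ have one diagonal factor (absorbed into the covariant derivative $\nabla'$), while the terms carrying $\phi^{\dagger}$ occur at exponents $\leq 2-2m<1-m$ and do not contribute. Thus $\nabla'\Phi=0$, and since $\Phi$ is of type $(1,0)$ its $(1,1)$-part gives $\delbar'_E\Phi=0$. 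Tracelessness of $\Phi$ and the fact that $g_n(\zeta)\in\SL(n,\C)$ preserves the determinant trivialization complete the verification that $(\delbar'_E,\Phi)$ is an $\SL(n,\C)$-Higgs bundle.

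\emph{Non-nilpotency.} Assuming $(\delbar_E,\phi)$ is not a $\C^\times$-fixed point, equivalently $m>1$ so that $A_{1-m}\neq 0$, the pair $(\delbar'_E,\Phi)$ is $m$-cyclic, and the quiver underlying $\Phi$ contains a closed cycle of length $m$: descend $m-1$ steps along the nonzero maps $\phi_{i,i+1}$ and close up along a nonzero component of $A_{1-m}$. At a generic point of $C$ the product of the maps around this cycle is nonzero, so $\Phi$ restricts there to a cyclic matrix with a nonzero eigenvalue; equivalently the $m$-differential $p_m(\Phi)=\Tr(\Phi^m)$ is a nonzero section of $K_C^m$. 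Hence the characteristic polynomial of $\Phi$ is not $\lambda^n$ and $\Phi$ is not nilpotent. This is the one place where $m>1$ (non-fixedness) is essential.

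\emph{Stability, and the main obstacle.} Set $\mathcal{E}'_j:=V_1\oplus\cdots\oplus V_j$ and $D_j:=\deg\mathcal{E}'_j$. Applying stability of the original $(\delbar_E,\phi)$ to the $\phi$-invariant subbundles $\ker(\phi^j)$ --- whose degree equals $D_j$, since degree depends only on the underlying smooth bundle --- yields the inherited inequalities $D_j<0$ for $1\leq j\leq n-1$, with $D_n=0$. For a \emph{graded} $\Phi$-invariant subbundle $\mathcal{F}=\bigoplus_{i\in S}V_i$, $\Phi$-invariance together with $\phi_{i,i+1}\neq 0$ for all $i$ forces $S$ to be closed under $i\mapsto i-1$, so $S=\{1,\dots,j\}$ and $\deg\mathcal{F}=D_j<0$ once $\mathcal{F}$ is proper; this is exactly the stability inequality. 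To reduce an arbitrary destabilizing subbundle to the graded case I would invoke the canonicity of the maximal destabilizing ($\Phi$-invariant, of maximal slope and then maximal rank) subbundle: it is preserved by the automorphism $g_n(\zeta_m)$ witnessing the $\mu_m$-fixed-point structure of $(\delbar'_E,\Phi)$, hence it is compatible with the $\mu_m$-grading.

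The hard part is precisely completing this reduction. The $\mu_m$-action only decomposes $E$ into the eigenbundles $E_c=\bigoplus_{i\equiv c\,(m)}V_i$, so canonicity a priori yields a destabilizer graded only modulo $m$, not with respect to the full grading $\bigoplus_iV_i$. I would close the gap by using the cyclic maps internal to each $E_c$ (the shortest nonzero compositions of $\phi$ and $A_{1-m}$ relating the successive levels $V_c,V_{c+m},\dots$) to promote the mod-$m$ splitting to a genuine $V_i$-grading, after which the inequalities $D_j<0$ finish the argument. Alternatively, one may simply appeal to the known stability of cyclic Higgs bundles~\cite{Simpson09,CollierThesis}.
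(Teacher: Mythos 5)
The stability step is where your proposal genuinely fails, and you have correctly identified it as the hard part without actually closing it. The reduction via the canonical maximal destabilizing subbundle only exploits the automorphism $g_n(\zeta_m)$, which decomposes $E$ into the $\mu_m$-eigenbundles $E_c=\bigoplus_{i\equiv c\,(m)}V_i$; a $\Phi$-invariant subbundle compatible with this mod-$m$ decomposition need not be a sum of the $V_i$, since $\Phi$-invariance only constrains the maps $\mathcal{F}_c\to\mathcal{F}_{c-1}\otimes K_C$ and says nothing about the internal structure of each $\mathcal{F}_c\subset E_c$. The promised ``promotion'' of the mod-$m$ splitting to the full grading using the internal cyclic maps is not an argument, and the fallback of citing a general stability theorem for cyclic Higgs bundles does not exist: cyclic Higgs bundles are not stable in general (stability depends on the degrees of the graded pieces). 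The paper closes this step with a different and complete device that you miss: stability is an open condition invariant under the $\C^\times$-action, and the $\xi\to 0$ limit of $\xi\cdot(\delbar'_E,\Phi)$ is the associated-graded VHS whose Higgs field is just the superdiagonal $\phi$; its only invariant subbundles are the partial sums $\mathcal{V}_1\oplus\dots\oplus\mathcal{V}_j=\ker(\phi^j)$, which have negative degree by exactly your inequalities $D_j<0$. Openness then gives stability of $\xi\cdot(\delbar'_E,\Phi)$ for small $\xi$, hence of $(\delbar'_E,\Phi)$. Your graded analysis supplies precisely the input for this degeneration argument, so the repair is to substitute it for the Harder--Narasimhan reduction.

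Two further points. In the curvature bookkeeping, the claim that every quadratic contribution to the $\zeta^{1-m}$ coefficient has a diagonal factor is false for $m\geq 3$: the terms $[A_{-k}\wedge A_{k+1-m}]$ for $1\leq k\leq m-2$ also land at that order, and $\phi^{\dagger}$ sits at exponent $m-1$, not $\leq 2-2m$ (it is excluded because pairing it with anything present gives exponent $\geq 0$). The cross terms do vanish, but for type reasons: your own observation that $\delbar_E$ preserves the filtration $\ker(\phi^j)$ shows that all $A_{-k}$ with $k\geq 1$ are of type $(1,0)$, so these brackets are $(2,0)$-forms on a Riemann surface. This is exactly how the paper disposes of them, except that the paper derives the type statement iteratively from the vanishing of $[\Phi\wedge\Phi]$ and the lower-order coefficients rather than directly from holomorphicity of the filtration; your direct route is a legitimate and arguably cleaner variant. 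The non-nilpotency argument is essentially the paper's (a closed cycle of length $m$ built from $m-1$ steps of $\phi$ and one nonzero component of $A_{1-m}$); note only that passing to $\Tr(\Phi^m)$ sums over all such cycles, so one should either isolate a single diagonal entry, as the paper does, or address possible cancellation among the cycles.
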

\begin{proof}
We need to check holomorphicity and stability. The former follows from the flatness of the family~(\ref{eq:NewFamilySLn}). Indeed, the curvature $F_{\nabla'_\zeta}$ can be decomposed by orders of $\zeta$ as
\begin{align} \begin{split}
    F_{\nabla'_\zeta} = &\zeta^{2-2m} \left[ \Phi \wedge \Phi \right] + \zeta^{3-2m} \left[ \Phi \wedge A_{2-m} \right] + \zeta^{4-2m} \left( [\Phi \wedge A_{3-m}] + [A_{2-m} \wedge A_{2-m}] \right) + \dots + \\ &+ \zeta^{1-m} \left( \nabla' \Phi + \sum_{k=1}^{m-2} \left[ A_{-k} \wedge A_{k+1-m} \right] \right) + \dots . \end{split}
\end{align}
The proof proceeds by iteratively analyzing the equations in orders of $\zeta$. The first term implies $0 = [\Phi \wedge \Phi] = 2 [\phi \wedge A_{1-m}]$ and hence that $A_{1-m}$ is of type $(1,0)$ as we will show now. Using (\ref{eq:DefAk}) one computes
\begin{equation}
   0 = \left( [\phi \wedge A_{1-m}] \right)_{i,j} = \Big( \phi_i \wedge A_{i+1, i+2-m} + \phi_{i+1-m, i+2-m} \wedge A_{i, i+1-m}    \Big) \delta_{i, j+ 2 -m} ,
\end{equation}
where it is understood that any term vanishes which has a subscript that is not in $\{1, \dots, n \}$. Now we iterate in $i$: When $i<m$, $i+1-m \leq 0$ and only the first term contributes, showing that $A_{2, 3-m}, \dots, A_{m, 1}$ are of type $(1,0)$. When $i=m$ the second term is of type $(2,0)$ and vanishes for degree reasons, so the equation implies that $A_{m+1, 2}$ is of type $(1,0)$. Continuing this way shows that all of the components of $A_{1-m}$ are of type $(1,0)$, and then so is $\Phi$.

Vanishing of the next term in the $\zeta$-expansion similarly implies that $A_{2-m}$ is of type $(1,0)$, and continuing in this fashion proves that $A_{-k}$ is of type $(1,0)$ for all $1 \leq k \leq m-1$. Geometrically, the vanishing of the $(0,1)$-pieces of $A_{-k}$ for positive $k$ reflects the fact that $\ker (\phi^j)$ are holomorphic subbundles of $(E, \delbar_E)$. It follows that the bracket terms in the $\zeta^{1-m}$-constituent of $F_{\nabla'_\zeta}$ are of type $(2,0)$ and vanish for degree reasons, so $\nabla' \Phi = 0$ and the $(1,1)$ piece demands that $\delbar'_E \Phi = 0$.

For stability, we observe the following: If the underlying vector bundle of a Higgs bundle is the direct sum of holomorphic line bundles $\mathcal{E}_i$ such that $\sum_{i=1}^{k} \deg \mathcal{E}_i < 0$ for all $k \in \{1, \dots ,n\}$, and if the entries of the Higgs field in $\Hom (\mathcal{E}_{i+1}, \mathcal{E}_i) \otimes K_C$ ($i=1,\dots,n-1)$ are nonzero, then the Higgs bundle is stable. Indeed, stability is an open condition that is invariant under the $\C^\times$-action, and the above shows that the $\zeta \to 0$ limit of $\zeta \cdot (\delbar'_E, \Phi)$ is stable. The requirements are fulfilled for us because the original Higgs bundle $(\delbar_E, \phi)$ is stable, and $\oplus_{i=1}^k \mathcal{E}_i = \ker (\phi^k)$ is a $\phi$-invariant subbundle, hence of negative degree.

Lastly, the fact that $\Phi$ is not nilpotent follows directly from the construction: Since $A_{1-m} \neq 0$, there must be some nonzero entry $A_{i+m-1,i}$ and the composition
\begin{equation}
    \mathcal{V}_i \xrightarrow[]{A_{i+m-1, i}} \mathcal{V}_{i+m-1} \otimes K_C \xrightarrow[]{\phi^{m-1}} \mathcal{V}_i \otimes K_C^{m}
\end{equation}
is a nonzero, holomorphic morphism $M$ of line bundles. In particular, $M^k \neq 0$ is the $(i,i)$ entry of $\Phi^{m\cdot k}$ for all $k$, so $\Phi$ is not nilpotent. \hfill
\end{proof} \\

As in the rank two case, the section $\nabla'_\zeta$ is not real and consequently no real twistor line (notice that $\nabla''_\zeta$ is not single-valued and hence in particular not holomorphic in $\zeta$):
\begin{prop}
The section $\nabla'_\zeta$ is not a real section, i.e.\ it is not gauge-equivalent to $\overline{\nabla'}_{-\bar{\zeta}^{-1}}$.
\end{prop}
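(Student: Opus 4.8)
The plan is to mimic the rank-two argument in Proposition~\ref{prop:NotATwistorLine}, where non-reality was detected by a conjugation-invariant that the putative gauge transformation would have to preserve but cannot. First I would spell out what reality demands. Writing $\tau(\zeta) = -\bar{\zeta}^{-1}$, the section $\nabla'_\zeta = \zeta^{1-m} \Phi + \dots + \nabla' + \dots$ is real precisely when there is a gauge transformation $\eta(\zeta)$ with $\overline{\nabla'_{\tau(\zeta)}} = \nabla'_\zeta . \eta$. Expanding the left-hand side in powers of $\zeta$ and matching against the right-hand side order by order would produce a system of conjugation relations, the leading one reading $\eta^{-1} \Phi \eta = \pm \overline{\Psi}$ (up to the appropriate power of $\zeta$ and an overall unit), where $\Psi = A_{m-1} + \phi^\dagger$ is the top-order term of $\nabla'_\zeta$.

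The key step is then to extract a conjugation-invariant of $\Phi$ that obstructs this relation, exactly as the determinant did in the $\SL(2)$ case. Since we are in $\SL(n,\C)$ the determinant is identically one and carries no information, so I would instead use the characteristic-polynomial data, i.e.\ the image under the Hitchin map~(\ref{eq:HitchinFibration}). The crucial structural fact, already established in the discussion preceding Proposition~\ref{prop:SecondaryHiggsFieldSLn}, is that $(\delbar'_E, \Phi)$ is an $m$-cyclic Higgs bundle, so the only nonzero Hitchin invariants of $\Phi$ are the $m$-, $2m$-, \dots, $\lfloor n/m\rfloor m$-differentials, and the lowest nonvanishing one, $p_m(\Phi) = \Tr(\Phi^m)$, is nonzero by the non-nilpotency established in Proposition~\ref{prop:SecondaryHiggsFieldSLn}. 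A conjugation relation $\eta^{-1}\Phi\eta = c\,\overline{\Psi}$ would force $\Tr(\Phi^m) = c^m\,\overline{\Tr(\Psi^m)}$; since $\Psi$ is built from the off-diagonal adjoint pieces $A_{m-1} + \phi^\dagger$ it is again $m$-cyclic but of the \emph{opposite} cyclic type, and one computes that $\Tr(\Psi^m)$ and $\Tr(\Phi^m)$ are related by a definite sign (or phase) that no choice of the central unit $c$ can absorb, giving the contradiction.

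The main obstacle I anticipate is the bookkeeping in identifying precisely which conjugation relation survives at the leading order and pinning down the offending sign or phase. In the rank-two case the mismatch was transparently a sign in the determinant; here the relevant invariant is $\Tr(\Phi^m)$ rather than a determinant, and I must verify carefully that (a) the leading-order reality equation genuinely forces $\Phi$ to be conjugate to a scalar multiple of $\overline{\Psi}$ rather than to some other term in the $\zeta$-expansion, and (b) the cyclic structures of $\Phi = \phi + A_{1-m}$ and $\Psi = \phi^\dagger + A_{m-1}$ produce $m$-differentials that are genuinely incompatible under conjugation. A clean way to organize (a) is to note that reality is a property of the section of $\MDH$ over $\CP^1$, so it suffices to compare the behavior of the section near $\zeta = 0$ (governed by $\Phi$) with its image under $\tau$ near $\zeta = \infty$ (governed by $\overline{\Psi}$); the Hitchin invariants are gauge-invariant, so the comparison reduces to the clean statement $h(\delbar'_E, \Phi) \neq \overline{h(\partial'_E, \Psi)}$ up to the scaling inherited from $\tau$, which follows from the sign computation in (b). I would relegate the explicit component computation of this sign to a short lemma rather than carrying it out inline.
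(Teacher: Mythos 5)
Your proposal follows essentially the same route as the paper: identify the $\zeta^{1-m}$-coefficient of $\overline{\nabla'}_{-\bar{\zeta}^{-1}}$ as a unit times $\overline{\Psi}$ with $\Psi = \phi^\dagger + A_{m-1} = \phi^\dagger - A_{1-m}^\dagger$, and detect the obstruction with the lowest Hitchin invariant $\Tr(X^m)$ of the $m$-cyclic secondary Higgs field, which picks up a sign. The one point your deferred lemma must nail down is that the ``overall unit'' is not a free central element but is forced by $\tau$ to be $(-1)^{m-1}$, whose $m$-th power is $1$, so the relation $\Tr\big(\overline{\Psi}^{\,m}\big) = -\Tr(\Phi^m)$ genuinely yields a contradiction --- whereas a genuinely free $n$-th root of unity $c$ could satisfy $c^m=-1$ for suitable $(n,m)$ and absorb the sign.
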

\begin{proof}
Recall that
\begin{equation}
    \nabla'_\zeta = \zeta^{1-m} (\phi + A_{1-m}) + \dots + \zeta^{m-1} (\phi^\dagger + A_{m-1})
\end{equation}
and by unitarity of the Chern connection we have $A_{m-1} = -A_{1-m}^\dagger$. A quick computation shows that
\begin{equation}
    \overline{\nabla'}_{-\bar{\zeta}^{-1}} = (-1)^{m-1} \zeta^{m-1} (\overline{\phi} + \overline{A_{1-m}}) + \dots + (-1)^{m-1} \zeta^{1-m} \left( \overline{\phi}^\dagger - \overline{A_{1-m}}^\dagger \right),
\end{equation}
so reality would imply in particular that $(\phi + A_{1-m})$ is gauge equivalent to 
\begin{equation}
    (-1)^m \left( \overline{\phi}^\dagger - \overline{A_{1-m}}^\dagger \right) = (-1)^m (\phi^T - A_{1-m}^T).
\end{equation}
The easiest invariant at our disposal is the $m$-differential associated to the invariant polynomial $\Tr (X^m)$, which is readily computed as
\begin{align} \begin{split}
    \Tr \left( \left[ (-1)^{m-1} (\phi^T - A_{1-m}^T) \right]^m \right) &= (-1)^{m(m-1)} m \Tr (\phi^{m-1} \circ (-A_{1-m})) \\ &= - m \Tr (\phi^{m-1} \circ A_{1-m}) = -\Tr \left( (\phi + A_{1-m})^m \right), \end{split}
\end{align}
hence the two cannot be gauge equivalent. \hfill
\end{proof} \\

\begin{rmk}
It would be desirable to find a more intrinsic characterisation of the integer $m$ that is associated to a nilpotent Higgs bundle in this way. More generally, it would be interesting to know in which connected components of $N - (\MH)^{\C^\times}$, the complement of the Variations of Hodge structure inside the nilpotent cone, the Higgs bundles that attain the theoretical maximum for the integer $m$ form a dense open subset. Note that for nilpotent Higgs bundles of type $(1,\dots, 1)$ that maximum is $n$, while for other types it is a strictly smaller integer as we will explain in the next section. It is relatively straightforward to see that a maximum is achieved on an open subset but it is less clear that this number coincides with the theoretical maximum.
\end{rmk}

Lastly, let us comment on the WKB theory for these families. Note that because $\Phi$ is a $\mu_m$-fixed point, the leading term of the family
\begin{equation}
    \nabla''_\zeta := \nabla_{\zeta}.g_n(\zeta) = \zeta^{(1-m)/m} \Phi + \zeta^{(2-m)/m} A_{2-m} + \dots = \nabla'_{\zeta^{1/m}}
\end{equation}
is single-valued in $\zeta$ up to equivalence, though the subleading terms may be multi-valued. Since (at least naively) the asymptotic behaviour of holonomies of families of flat connections is controlled by their leading constituent, there is hope in proving a WKB result analogous to that in~\ref{thm:WKBInRank2}. However, the existence of a WKB curve $\gamma: [0,1] \to C$ is even more restrictive in this case: For a fixed $\mathrm{Arg} (\zeta)$ and a fixed $m$-th root $\zeta^{1/m}$, it is required that the eigenvalues $\mu_i (t) dt$ of $\gamma^\ast \Phi$ are distinct with well-ordered real parts 
\begin{equation} \label{eq:WKBConditionSLN}
\Re (\zeta^{(1-m)/m} \mu_1 (t) ) > \Re (\zeta^{(1-m)/m} \mu_2 (t) ) > \dots > \Re (\zeta^{(1-m)/m} \mu_n (t) )    
\end{equation}
for all $t$. We are unaware of a general existence theorem for WKB curves on a Riemann surface with a higher rank Higgs bundle, see also Appendix~\ref{app:WKBcurves}. Nonetheless, if a WKB curve exists then it is expected that the analogous statement to Proposition~\ref{prop:RegularWKB} and hence to Theorem~\ref{thm:WKBInRank2} should hold, namely that asymptotically
\begin{equation} \label{eq:WKBHigherRank}
    \Tr \Hol_\gamma (\nabla_\zeta ) = \Tr \Hol_\gamma (\nabla''_\zeta ) \sim \exp (\zeta^{(1-m)/m} Z_\gamma),
\end{equation}
where $Z_\gamma = \int_0 ^1 \mu_1(t) dt \neq 0$. More generally one would expect that for any loop $\gamma$ there is a constant $Z_\gamma$ such that~(\ref{eq:WKBHigherRank}) holds, though this requires the more difficult theory of \textit{higher rank exact WKB analysis} which is to the best of our knowledge currently still under development.

\subsection{For general nilpotent Higgs fields}
In this subsection we will study the case that the Higgs field is of type $(n_1, \dots, n_k)$ where $n_1 \geq \dots \geq n_k$ and $\sum n_i = n$. In terms of the Jordan normal form this means that there are $n_k$ blocks of size at least $k$. The general idea is to apply the procedure from the previous subsection ``blockwise" and for this we start by describing a convenient frame for our computation.

As before, $(\delbar_E, \phi)$ defines holomorphic subbundles $\mathcal{E}_j := \ker \phi^j$ of dimension $n_1 + \dots + n_j$, and we can pick an orthogonal splitting $\mathcal{E}_j = V_1 \oplus \dots \oplus V_j$ using the associated graded $V_i = \mathcal{E}_i / \mathcal{E}_{i-1}$. We want to further decompose the $V_i$ into line bundles for which we need the following lemma.

\begin{lem} \label{lem:OrthSplit}
Any hermitian vector bundle $(V,h)$ on a smooth, connected 2-manifold admits an orthogonal decomposition into complex line bundles $V = L_1 \oplus \dots \oplus L_p$, where $p = \rk V$.
\end{lem}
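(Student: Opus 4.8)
The plan is to argue by induction on the rank $p = \rk V$. The base case $p=1$ is immediate, since then $V = L_1$ is already a line bundle. For the inductive step with $p \geq 2$, the strategy is to produce a single smooth complex line subbundle $L_1 \subset V$ and then pass to its $h$-orthogonal complement $L_1^{\perp_h}$, which is a smooth hermitian bundle of rank $p-1$ to which the induction hypothesis applies; splicing the resulting decomposition of $L_1^{\perp_h}$ together with $L_1$ yields the desired orthogonal splitting of $V$.

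The crux is the existence of the line subbundle $L_1$ when $p \geq 2$, and I would obtain it as the span of a nowhere-vanishing smooth section of $V$. Such a section exists for a dimensional reason: regard $V$ as an oriented real vector bundle of rank $2p$ over the $2$-manifold $M$, start from any smooth section, and perturb it to a section $s$ transverse to the zero section (Thom transversality). Then $s^{-1}(0)$ is a smooth submanifold of $M$ with $\codim s^{-1}(0) = \rk_{\R} V = 2p$, hence of dimension $2 - 2p < 0$; therefore $s^{-1}(0) = \varnothing$ and $s$ is nowhere vanishing. Equivalently, the only obstruction to a nonvanishing section is the Euler class in $H^{2p}(M;\Z)$, which vanishes because $2p > 2 = \dim M$, and all higher obstruction groups vanish as well. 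This is the main point of the proof; the rest is bookkeeping.

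Given such an $s$, the unit section $s/\sqrt{h(s,s)}$ trivializes a smooth line subbundle $L_1 \cong \underline{\C}$, and because $h$ is smooth the fiberwise $h$-orthogonal projection onto $L_1$ is smooth, so $L_1^{\perp_h}$ is a genuine smooth hermitian subbundle of rank $p-1$. Applying the induction hypothesis to $(L_1^{\perp_h}, h|_{L_1^{\perp_h}})$ produces an orthogonal splitting $L_1^{\perp_h} = L_2 \oplus \dots \oplus L_p$ into line bundles, and since $L_1 \perp_h L_1^{\perp_h}$ by construction this gives $V = L_1 \oplus L_2 \oplus \dots \oplus L_p$ orthogonally. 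The only points requiring care are that the transversality perturbation be taken within the space of smooth sections, so that $L_1$ is genuinely a smooth subbundle, and that transversality still applies when $M$ is non-compact, which it does for paracompact $M$. Note that the $L_i$ need not be topologically trivial, as all of $c_1(V)$ is absorbed into the final factor $L_p$.
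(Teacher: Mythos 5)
Your proof is correct and follows essentially the same route as the paper: produce a nowhere-vanishing smooth section by a transversality/dimension-count argument (possible since the real rank $2p$ exceeds $\dim M = 2$ once $p \geq 2$), take the $h$-orthogonal complement of the resulting line subbundle, and iterate. Your write-up is somewhat more careful about why the perturbation works and about smoothness of the orthogonal complement, but the underlying argument is identical.
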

\begin{proof}
First, any vector bundle $V$ of rank at least $2$ on a surface $C$ admits a line subbundle. Indeed, it is enough to construct a nowhere vanishing section $s$ of $V$ which is possible by first taking any section with discrete zeroes and then perturbing it smoothly to avoid zeroes. The latter is possible because $rk(V) > \dim (C)$.

Given any line subbundle $L_1 \subset V$, the hermitian metric $h$ defines an orthogonal complement $V_1$. Next, pick a line subbundle $L_2 \subset V_1$ which gives an orthogonal decomposition $V_1 = L_2 \oplus V_2$ etc. Iteration of this process yields the desired orthogonal decomposition $V = L_1 \oplus \dots \oplus L_p$. \hfill
\end{proof} \\

We can apply the previous Lemma to $V_k$ to obtain an orthogonal decomposition
\begin{equation}
    V_k = L_k^{(1)} \oplus \dots \oplus L_k^{(n_k)}
\end{equation}
into complex line bundles. Next, $k > 1$ since $\phi \neq 0$ and therefore $\phi$ does not annihilate the line bundles $L_k^{(i)}$, so we can inductively define line subbundles $L_{j}^{(i)}$ ($i=1,.., n_k; j= 1,.., k -1)$ of $V_{k-1}$ via
\begin{equation}
   \phi (L_{j+1}^{(i)}) = L_j^{(i)} \otimes \Omega^1.
\end{equation}
So far we described a decomposition of the subbundles corresponding to the largest Jordan blocks, namely those of size $k$. Let $k_1 < k$ be the largest integer such that $n_{k_1} > n_k$, we use the hermitian metric to define an orthogonal complement of $L_{k_1}^{(1)} \oplus \dots \oplus L_{k_1}^{(n_k)} \subset V_{k_1}$ which we orthogonally split into line bundles using Lemma~\ref{lem:OrthSplit} to obtain
\begin{equation}
    V_{k_1} = L_{k_1}^{(1)} \oplus \dots \oplus L_{k_1}^{(n_k)} \oplus L_{k_1}^{(n_k + 1)} \oplus \dots \oplus L_{k_1}^{(n_{k_1})}.
\end{equation}
As previously, if $k_1 > 1$ then $\phi$ does not annihilate these line bundles and one can inductively define
\begin{equation}
    L_j^{(i)} = \phi (L_{j+1}^{(i)}) \otimes \left( \Omega^1 \right)^{-1},
\end{equation}
where $i = n_k +1,..,n_{k_1}$, and $j = 1,..,k_1-1$. We can continue in the same vein to orthogonally decompose the entire bundle $E$ into line bundles, which proves the following

\begin{prop}
Given a nilpotent Higgs field $(\mathcal{E}, \phi)$ of type $(n_1,..,n_k)$ and a hermitian metric $h$, there is an orthogonal decomposition into complex line bundles
\begin{equation} \label{eq:BundleGrading}
    E = \bigoplus_{\substack{j=1,\dots, k, \\ i_j = 1, \dots, n_j}} L_j ^{(i_j)}
\end{equation}
such that $\phi ( L_1 ^{(i)} ) = 0$ and for $j \neq 1$,
\begin{equation}
    \phi (L_j ^{(i)}) \subset L_{j-1}^{(i)} \otimes \Omega^1 .
\end{equation}
Moreover, the vector bundles
\begin{equation}
    \mathcal{E}_r = \ker (\phi^r) = \bigoplus_{\substack{j=1,..,r, \\ i_j = 1,..,n_j}} L_{j}^{(i_j)} 
\end{equation}
are holomorphic subbundles of $\mathcal{E}$ for $r=1,..,k$.
\end{prop}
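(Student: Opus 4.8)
The statement merely records the decomposition produced by the inductive procedure described just above it, so the plan is to organize that procedure as a single downward induction on the Jordan block size and then verify the three asserted properties. First I would set up the ambient orthogonal grading. Since $\mathcal{E}_r = \ker\phi^r$ is a saturated subsheaf of $\mathcal{E}$ — the quotient $\mathcal{E}/\mathcal{E}_r \cong \im\phi^r$ embeds via $\phi^r$ into the locally free sheaf $\mathcal{E}\otimes K_C^r$ and is therefore torsion-free — it is a holomorphic subbundle, which already proves the ``moreover'' clause about the $\mathcal{E}_r$. Taking $h$-orthogonal complements $V_r = \mathcal{E}_r \cap \mathcal{E}_{r-1}^{\perp_h}$ gives a $C^\infty$-orthogonal splitting $E = \bigoplus_r V_r$ with $\dim V_r = n_r$; this secures orthogonality between pieces at different levels $j$ and realizes the filtration holomorphically.

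The induction then runs over the distinct block sizes $k = k_0 > k_1 > \cdots$. At stage $0$ I split $V_k$ into lines $L_k^{(1)},\dots,L_k^{(n_k)}$ by Lemma~\ref{lem:OrthSplit} and transport each downward, setting $L_{j-1}^{(i)}$ to be the line spanned by $\phi(L_j^{(i)})$ (twisted by $(\Omega^1)^{-1}$); since a section of $L_j^{(i)}$ lies in $\ker\phi^j\setminus\ker\phi^{j-1}$ away from the degeneration locus, $\phi$ is nonzero on it and its image is a line. At the next stage I pass to $V_{k_1}$, where the $n_k$ lines coming from the longer chains already sit, take the $h$-orthogonal complement of their span, split it into lines by Lemma~\ref{lem:OrthSplit}, and transport those down; iterating exhausts $E$. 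The mapping property $\phi(L_j^{(i)})\subset L_{j-1}^{(i)}\otimes\Omega^1$ is then immediate from the transport definition, and the identity $\mathcal{E}_r = \bigoplus_{j\le r} L_j^{(i_j)}$ follows because each $L_j^{(i)}$ with $j\le r$ is killed by $\phi^r$ while those with $j>r$ are not, combined with the dimension count $\sum_{j\le r} n_j = \rk\mathcal{E}_r$.

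The step I expect to be the real obstacle is reconciling the transport with orthogonality. Because $\phi$ is not unitary, the images $\phi(L_j^{(i)})$ of an orthogonal family of lines need neither remain mutually orthogonal after one step nor land exactly in $V_{j-1}$ rather than in the lower pieces of $\mathcal{E}_{j-1}$; moreover the induced graded map $V_j \to V_{j-1}\otimes K_C$ can drop rank at the finitely many points where the Jordan type of $\phi$ degenerates. I would fix the positioning by projecting $\phi(L_j^{(i)})$ into $V_{j-1}$ — that is, by working with the graded endomorphism $\Pi_{V_{j-1}}\circ\phi$ — and handle the degeneration by first carrying out the construction on the open set where the type is constant and then extending the resulting line subbundles across the finite bad locus, which is legitimate because the saturations of the holomorphic subsheaves involved are honest subbundles. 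The one feature that cannot in general be imposed \emph{together with} the single-step chain condition is full within-level orthogonality of the transported lines; I would either arrange it by simultaneously diagonalizing $h$ against its $\phi$-pullbacks on $V_k$ in the cases where the number of constraints allows, or simply record that the robust output needed downstream — the orthogonal grading $E=\bigoplus_r V_r$ refined by a $\phi$-adapted chain splitting — is exactly what the subsequent gauge-transformation argument consumes.
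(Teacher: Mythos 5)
Your construction is the same as the paper's: the orthogonal refinement of the filtration $\mathcal{E}_r=\ker(\phi^r)$, Lemma~\ref{lem:OrthSplit} applied to the top graded piece of each chain length, and downward transport of the resulting lines by $\phi$. Two points are handled better than in the paper's own write-up. First, the ``moreover'' clause: your saturation argument (the quotient $\mathcal{E}/\ker\phi^r$ embeds via $\phi^r$ into the locally free sheaf $\mathcal{E}\otimes K_C^r$, hence is torsion-free, hence $\ker\phi^r$ is an honest subbundle) is the justification the paper omits. Second, the two technical worries you raise are genuine and are passed over in silence in the text: $\phi(L_{j}^{(i)})$ is only a subsheaf of a line-bundle twist and must be saturated across the finitely many points where the Jordan type degenerates, and it lands in $\mathcal{E}_{j-1}\otimes\Omega^1$ rather than in $V_{j-1}\otimes\Omega^1$.

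As a proof of the proposition as literally stated, however, your attempt is incomplete, and you have correctly diagnosed why. The three asserted properties over-determine the lines: orthogonality together with $\mathcal{E}_r=\bigoplus_{j\leq r}L_j^{(i_j)}$ forces $\bigoplus_i L_j^{(i)}=\mathcal{E}_j\cap\mathcal{E}_{j-1}^{\perp_h}=V_j$, and then $\phi(L_j^{(i)})\subset L_{j-1}^{(i)}\otimes\Omega^1$ would force the component of $\phi|_{V_j}$ in $(V_1\oplus\dots\oplus V_{j-2})\otimes\Omega^1$ to vanish, which is a nontrivial condition on $(\phi,h)$ whenever $k\geq 3$ (for $k\geq 3$ this already fails for a single Jordan block of size $3$, where $L_j=V_j$ is forced and $\phi(V_3)\subset V_2\otimes\Omega^1$ need not hold); similarly, when two chains of equal length $\geq 2$ coexist, the transported lines in a lower level need not be mutually orthogonal. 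Your proposed repair --- define the transport using the graded map $\Pi_{V_{j-1}}\circ\phi$, i.e.\ weaken the chain condition to $\phi(L_j^{(i)})\subset\bigl(L_{j-1}^{(i)}\oplus\mathcal{E}_{j-2}\bigr)\otimes\Omega^1$ --- is the correct statement, and it is all that the gauge-transformation argument of Section~\ref{sec:SLN} consumes: the lower-filtration components of $\phi$ are scaled into subleading order by $g(\zeta)$ exactly as the $A_{-k}$ are, so only the graded piece contributes to the leading term $\Phi$. You should commit to that version rather than leave the ``either/or''; the alternative of arranging exact orthogonality by simultaneous diagonalization is not available in general, since for $k\geq 3$ one would have to diagonalize more than two hermitian forms at once.
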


The previous proposition shows that there is a global (over $C$) frame in which $\phi$ exhibits its Jordan canonical form. To be explicit, let $(k_1 , \dots , k_l)$ be the partition of $n$ that is transpose to $(n_1, \dots, n_k)$ and let
\begin{equation} \label{eq:FiltrationWi}
    W^{(i)} := \oplus_{j=1}^{k_i} L^{(i)}_j.
\end{equation}

Then $\phi$ restricts to a nilpotent morphism $\phi^{(i)}: W^{(i)} \to W^{(i)} \otimes \Omega^1$ that is of type $(1,1,\dots,1)$. Notice that unless $(\delbar_E, \phi)$ is a fixed point, the $W^{(i)}$ are not subbundles in the holomorphic sense. We can copy the strategy from Section~\ref{subsec:MaxType} to each block $(W^{(i)}, \phi^{(i)})$ to construct a gauge transformation $g(\zeta)$ of $\nabla_\zeta$. Denote by $m_i \in \{2, \dots, k_i \}$ the integer $m$ in the block $i$, and by $A^{(i)}_{1- m_i} \neq 0$ the part of the Chern connection restricted to $W^{(i)}$ that has weight $1-m_i$ with respect to the grading~(\ref{eq:FiltrationWi}) of $W^{(i)}$. Then $g(\zeta)$ is of the form
\begin{equation}
    g(\zeta ) = \diag \left( \zeta^{(1-k_1)/2m_1}, \zeta^{(3-k_1)/2m_1}, \dots , \zeta^{(k_1 - 1)/2 m_1}, \zeta^{(1-k_2)/2m_2}, \dots , \zeta^{(k_l - 1)/2m_l} \right) 
\end{equation}
and if we let $m := \max \{ m_i \}$ then the gauge-transformed connection takes the form
\begin{equation} \label{eq:FinalFamily}
    \nabla'_\zeta := \nabla_\zeta . g(\zeta) = \zeta^{(1-m)/m} \Phi + \dots + \delbar '_E + \dots,
\end{equation} 
where 
\begin{equation}
Phi = \sum_{i: m_i = m} \phi^{(i)} + A^{(i)}_{1-m_i}
\end{equation}
and the pair $(\delbar'_E, \Phi)$ defines a stable Higgs bundle that is not nilpotent. 

Let us comment a little more on the shape of the Higgs bundle $(\delbar'_E , \Phi)$. $\delbar'_E$ is diagonal with respect to the grading (\ref{eq:BundleGrading}). $\Phi$ is block-diagonal with square blocks $\Phi^{(i)}$ of size $k_1 < \dots < k_l$, where $(k_1, \dots , k_l)$ is the transpose partition to $(n_1, \dots, n_k)$. $\Phi^{(i)} = \phi^{(i)} + A^{(i)}_{1-m_i} $ is the sum of two pieces, where $\phi^{(i)}$ is a nilpotent Higgs field of rank $k_i$ and type $(1, \dots, 1)$ and the only non-zero components of $A^{(i)}_{1-m_i} $ have distance $m_i - 1$ from the diagonal. In particular, $\Phi^{(i)}$ is the Higgs field of an $m_i$-cyclic Higgs bundle.

This proves the following

\begin{thm} \label{thm:HigherRank}
Let $(\delbar_E, \phi)$ be a nilpotent Higgs bundle of type $(n_1, \dots, n_k)$, $n_k \neq 0$, which is not a $\C^\times$-fixed point. Then there exists an integer $m \in \{2, \dots, k \}$ and a gauge transformation $g(\zeta)$ such that (\ref{eq:FinalFamily}) holds and the pair $(\delbar'_E, \Phi)$ defines a stable Higgs bundle that is not nilpotent.
\end{thm}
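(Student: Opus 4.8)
The plan is to deduce Theorem~\ref{thm:HigherRank} from the maximal-type analysis of Section~\ref{subsec:MaxType}, applied one Jordan block at a time, with the block decomposition furnished by the orthogonal line-bundle splitting~\eqref{eq:BundleGrading}. First I would fix the $C^\infty$-frame in which $\phi$ is in global Jordan form and block-diagonal with respect to the subbundles $W^{(i)}=\bigoplus_j L^{(i)}_j$ of~\eqref{eq:FiltrationWi}; on each pair $(W^{(i)},\phi^{(i)})$ the restricted field is of type $(1,\dots,1)$, so the construction~\eqref{eq:ChangeOfFrame}--\eqref{eq:NewFamilySLn} produces the integer $m_i\in\{2,\dots,k_i\}$ together with the nonzero lowest-weight Chern piece $A^{(i)}_{1-m_i}$. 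Assembling the blockwise diagonal rescalings into a single matrix $g(\zeta)$ with block-dependent fractional exponents and setting $m=\max_i m_i$ yields the candidate family~\eqref{eq:FinalFamily}; as already in the single-block case this family is multivalued in $\zeta$, its leading exponent being the fraction $(1-m)/m$.

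Next I would track orders under conjugation by $g(\zeta)$. Within block $i$ a component at distance $d$ from the diagonal is scaled by $\zeta^{d/m_i}$, so both $\zeta^{-1}\phi^{(i)}$ and $A^{(i)}_{1-m_i}$ land at order $\zeta^{(1-m_i)/m_i}$, an exponent which is smallest precisely when $m_i=m$; this identifies the block-diagonal part of the leading coefficient as $\Phi=\sum_{i:\,m_i=m}\phi^{(i)}+A^{(i)}_{1-m_i}$. The delicate point, and where the multi-block case genuinely departs from Proposition~\ref{prop:SecondaryHiggsFieldSLn}, is the off-block behaviour: because $g(\zeta)$ carries block-dependent denominators $m_i$, a cross-block component of the Chern connection is scaled by an exponent mixing two different $m_i$ and could a priori occur at or below order $\zeta^{(1-m)/m}$. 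Here I would combine unitarity, the holomorphicity of the flag $\ker(\phi^r)$ (which forces $\delbar_E$ to be filtration-nonincreasing and hence $\partial_E$ filtration-nondecreasing), and the flatness equations obtained from the $\zeta$-expansion of $F_{\nabla'_\zeta}$ to show that no cross-block entry undercuts the order $\zeta^{(1-m)/m}$.

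Granting this, holomorphicity and stability follow by repeating the proof of Proposition~\ref{prop:SecondaryHiggsFieldSLn} with the blockwise weights. Expanding $F_{\nabla'_\zeta}$ in powers of $\zeta$ and reading off the equations order by order gives first $[\Phi\wedge\Phi]=0$, which forces $\Phi$ to be of pure type $(1,0)$, and descending to the constant order yields $\delbar'_E\Phi=0$. Non-nilpotency is immediate from the construction: for any block with $m_i=m$ a nonzero entry of $A^{(i)}_{1-m}$ composes with $(\phi^{(i)})^{m-1}$ to a nonzero holomorphic endomorphism $M$ of a line summand, which is a diagonal entry of $\Phi^{m}$, so that $\Phi^{mk}\neq 0$ for all $k$. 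Stability then follows, as before, from the openness and $\C^\times$-invariance of stability, once one identifies the $\zeta\to 0$ limit of $(\delbar'_E,\Phi)$ as a direct sum of line bundles with nonzero superdiagonal Higgs entries whose partial degree sums are negative, the negativity being inherited from stability of $(\delbar_E,\phi)$ via the $\phi$-invariance, hence negative degree, of the holomorphic subbundles $\ker(\phi^r)$.

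I expect the decisive difficulty to be exactly this order-bookkeeping for cross-block terms. Unlike the scalar-homogeneous rescaling of the single-block case, $g(\zeta)$ mixes the denominators $m_i$, so an off-block $(1,0)$-component of the Chern connection — which unitarity permits even when its $(0,1)$-counterpart is excluded by flag-holomorphicity — could in principle appear strictly below order $\zeta^{(1-m)/m}$ and thereby spoil the stated leading term. Ruling this out, presumably by feeding the flatness equations into the order-by-order curvature expansion rather than relying on holomorphicity alone, is the heart of the matter and the step most in need of a careful, type-independent argument.
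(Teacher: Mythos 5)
Your proposal follows the paper's argument essentially step for step: the orthogonal line-bundle splitting \eqref{eq:BundleGrading}, the blocks $W^{(i)}$ of \eqref{eq:FiltrationWi}, the blockwise application of the type-$(1,\dots,1)$ construction of Section~\ref{subsec:MaxType}, the definition $m=\max_i m_i$, and the identification of the leading coefficient $\Phi$ as the sum over the blocks realizing the maximum, with holomorphicity, stability and non-nilpotency obtained as in Proposition~\ref{prop:SecondaryHiggsFieldSLn}. Up to the one issue discussed below, this is exactly the paper's proof.

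The point where you go beyond the paper is the cross-block bookkeeping, and you are right to single it out: the paper simply asserts \eqref{eq:FinalFamily} with a block-diagonal $\Phi$ and never examines the components of the Chern connection mapping $W^{(i')}$ to $W^{(i)}$ for $i\neq i'$, so the difficulty you flag as ``the heart of the matter'' is a gap shared by your write-up and the paper's, not a defect of yours alone. The worry is genuine. For instance, if two blocks have equal rank $k_i=k_{i'}$ and equal $m_i=m_{i'}=m$, their weights coincide and the cross-block component of $\partial_E^h$ raising the filtration degree by $m-1$ lands exactly at order $\zeta^{(1-m)/m}$; being filtration-increasing it is purely of type $(1,0)$ (holomorphicity of $\ker(\phi^r)$ only kills its $(0,1)$-part), and there is no a priori reason for it to vanish, so it must be absorbed into $\Phi$, which is therefore not block-diagonal in general and for which non-nilpotency has to be re-derived, since the extra entries could interfere with the cycle argument. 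When the blocks carry different weights, filtration-preserving cross-block components --- whose $(0,1)$-parts need not vanish, because the individual lines $L^{(i)}_j$ are not holomorphic subbundles --- can appear at negative fractional orders strictly between $\zeta^{(1-m)/m}$ and $\zeta^{0}$, which affects what may legitimately be hidden in the ``$\dots$'' of \eqref{eq:FinalFamily} and the intended WKB application. Carrying out the order-by-order flatness expansion you sketch, together with unitarity and the holomorphicity of the flag, is precisely what is needed to close this; as it stands, neither your proposal nor the paper does so.
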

%\newpage
\section{The case of parabolic Higgs bundles} \label{sec:Parabolic}
In this section we will describe how the constructions described thus far carry over to the setting of parabolic Higgs bundles. After reviewing some basic facts about parabolic Higgs bundles and their moduli spaces, a subject pioneered by Hitchin (\cite{N1}, \cite{N2}), we will turn our attention to the ``toy model'' which is a moduli space of Higgs bundles of the minimal possible real dimension, 4. In this section we restrict our attention to the case of $\SL (2, \C )$ and degree zero bundles.

\subsection{Parabolic Higgs bundles}

Fix a compact Riemann surface $C$ with a divisor $D = p_1 + \dots + p_n$, together with a pair $\{ -\rho_i , \rho_i \}$ of parabolic weights where $0 < \rho_i < 1/2$ for each puncture $p_i$. Fix furthermore a complex vector bundle $E \rightarrow C$ of rank $2$ and degree zero and an isomorphism $\mathcal{O}_C \rightarrow \det E$ which induces a holomorphic structure $\bar{\partial}_{\det E}$ on the determinant bundle.

A parabolic $\mathrm{SL}(2, \C)$- Higgs bundle on $(C, D)$ is a triple $(\bar\partial_E , \mathcal{F}_i, \phi) $ consisting of
\begin{itemize}
    \item a holomorphic structure $\bar\partial_E$ on $E$ which induces the fixed holomorphic structure $\bar\partial_{\det E}$ on its determinant line bundle,
    \item a complete flag $\mathcal{F}_i = F_i^\bullet = \{0 = F_i^0 \subset F_i^1 \subset F_i^2 = E_{p_i} \} $ at each of the punctures,
    \item  a holomorphic section $\phi \in H^0 (C, \End ( (E, \bar\partial_E)) \otimes K_C (D) )$ which is traceless and such that $\mathrm{res} \; \phi (p_i) : F^j_i \rightarrow F^{j-1}_i \otimes K_C (D)_{p_i}, $ i.e.\ the residue of the Higgs field is strictly upper triangular\footnote{Higgs fields with this condition are often called \textit{strongly parabolic} but we will call them parabolic for simplicity as we will not deal with the weakly parabolic case here.} with respect to the filtration.
\end{itemize} 

We will denote by $\mathcal{E}$ the holomorphic vector bundle $(E, \bar\partial_E)$ together with the family of flags $\{\mathcal{F}_i \}$, so a Higgs bundle is again a pair $(\mathcal{E}, \phi)$. Its \textit{parabolic degree} is defined as
\begin{equation}
    \mathrm{pdeg} \; \mathcal{E} := \deg \mathcal{E} + \sum (\rho_i - \rho_i) = 0
\end{equation}
and so automatically vanishes. For a holomorphic line subbundle $\mathcal{L}$ of $\mathcal{E}$ we let $\alpha_i = \rho_i$ if $\mathcal{L}_{p_i} = F^1_i$, and $\alpha_i = -\rho_i$ otherwise. The parabolic structure of $\mathcal{E}$ induces a parabolic structure on $\mathcal{L}$ and the parabolic degree of $\mathcal{L}$ is
\begin{equation}
    \mathrm{pdeg} \; \mathcal{L} = \deg \mathcal{L} + \sum \alpha_i.
\end{equation}
A line subbundle $\mathcal{L}$ of $\mathcal{E}$ is called $\phi$\textit{-invariant} if $\phi (\mathcal{L}) \subset \mathcal{L} \otimes K_C(D)$. We call a Higgs bundle $(\mathcal{E}, \phi)$ 
$\vec\rho$-\textit{stable} (resp.\ $\vec\rho$-\textit{semistable}) if for every $\phi$-invariant line subbundle $\mathcal{L}$ we have
\begin{equation} \label{eq:stability}
    \mathrm{pdeg} \; \mathcal{L} = \frac{\mathrm{pdeg} \; \mathcal{L}}{\mathrm{rank} \mathcal{L}} < (\leq) \frac{\mathrm{pdeg} \; \mathcal{E}}{\mathrm{rank} \mathcal{E}} = 0.
\end{equation}

An isomorphism of parabolic Higgs bundles is an isomorphism of holomorphic vector bundles that preserves the flag structure and commutes with the Higgs field. The moduli space $\mathcal{M}_H$ of isomorphism classes of $\vec\rho$-stable Higgs bundles is a smooth, quasi-projective variety that has been constructed through geometric invariant theory in~\cite{Yoko}. It comes equipped with the usual \textit{Hitchin map} to a vector space of quadratic differentials
\begin{align} \begin{split}
    h : \mathcal{M}_H &\rightarrow \mathcal{B} = H^0 (C, K_C^2 (D)) \\
    \phi &\mapsto \Tr (\phi^2), \end{split}
\end{align}
a proper map whose generic fiber is a compact complex torus. Note that since $\phi$ has at most simple poles along $D$ and since its residue is nilpotent, $\Tr (\phi^2)$ will also have at most simple poles. 

\subsection{A secondary Higgs bundle: The parabolic case}
We want to describe how parabolic Higgs bundles give rise to families of flat connections through a parabolic version of the Non-Abelian Hodge Correspondence. For this, we must first recall what it means for a hermitian metric and a flat connection to be adapted to the parabolic structure. Locally, away from the punctures, nothing changes compared to the case discussed previously and it will suffice for us to describe the situation in a neighborhood of a puncture, cf.~\cite{BiquardGP}.

Near the puncture $p$ with parabolic structure ($\mathcal{F}^\bullet_p,  \pm \rho )$ we fix a local coordinate $z$ such that $z=0$ corresponds to $p$. A hermitian metric $h$ is called \textit{adapted to the parabolic structure at p} if there is a local holomorphic decomposition $\mathcal{E} = \mathcal{L}_1 \oplus \mathcal{L}_2$ compatible with the filtration, i.e.\ $\mathcal{L}_1 |_p = \mathcal{F}^1_p$, such that $h$ is locally of the form
\begin{equation}
    h = \begin{pmatrix} |z|^{2 \rho} & \\ & |z|^{-2 \rho}    \end{pmatrix}.
\end{equation}
$h$ is called \textit{adapted to the parabolic structure}, or just \textit{adapted}, if it is adapted to the parabolic structure at every puncture $p$. The associated Chern connection locally takes the form 
\begin{equation} \label{eq:ParChernCxn}
    D_h = h^{-1} \partial h = d + \frac{1}{z} \begin{pmatrix} \alpha & \\ & - \alpha \end{pmatrix} dz.
\end{equation}
Passing from a holomorphic frame $(e_1 , e_2)$ to a unitary one via $(\Tilde{e}_1, \Tilde{e}_2) = (|z|^{-\rho} e_1, |z|^\rho e_2)$ this becomes
\begin{equation}
    D_h = d + \begin{pmatrix} i\rho & \\ & -i \rho \end{pmatrix} d\theta
\end{equation}
for $z=r e^{i \theta}$. (Part of) the Non-Abelian Hodge Theorem then states that for every stable parabolic Higgs bundle $(\delbar_E, \phi)$ there exists a unique adapted hermitian metric $h$ such that
\begin{equation}
    \nabla_\zeta = \zeta^{-1} \phi + D_h + \zeta \phi^{\dagger_h}
\end{equation}
is a flat connection for all $\zeta \in \C^\times$ \cite{SimpsonParabolicNHC}.

Now let us restrict our attention to the case that $\phi$ is non-zero and nilpotent. This induces a holomorphic filtration $0 \subset \mathcal{K} \subset \mathcal{E}$ as before, via $\mathcal{K} = \ker \phi$. If $\phi$ has a pole at $p_i \in D$, then by definition we necessarily have $\mathcal{K}_{p_i} = F^1_i$, in other words the two flags necessarily coincide over the puncture. If on the other hand $\phi$ is regular at a puncture $p_j \in D$ then there is no compatibility condition between the two flags.

We are now in the position to take a closer look at the effect of the gauge transformation $g(\zeta) = \diag (\zeta^{1/4}, \zeta^{-1/4})$. Decomposing $D = D_h = A_1 + D' + A_{-1}$ as in~(\ref{eq:CxnSplitting}) yields
\begin{equation}
    \nabla'_\zeta := \nabla_\zeta . g(\zeta) = \zeta^{-1/2} (A_{-1} + \phi) + D' + \mathcal{O}(\zeta^{1/2}).
\end{equation}
The discussion from section~\ref{section:NilpotentWKB} carries over verbatim as long as we can ensure that the pair $(\delbar'_E, A_{-1} + \phi)$ has the correct behaviour near the punctures to define a parabolic Higgs field. But (\ref{eq:ParChernCxn}) implies that $A_{-1}$ has no poles, so $\Phi = A_{-1} + \phi$ has the same residues as $\phi$. This proves the following

\begin{thm}
Let $(\delbar_E, \phi) \in \MH$ be a $\vec \rho$-stable nilpotent parabolic $\SL_2 (\C )$-Higgs bundle that is not a fixed point of the $\C^\times$-action. Then the real twistor line 
\begin{equation}
    \nabla_\zeta = \zeta^{-1} \phi + \delbar_E + \partial_E^h + \zeta \phi^{\dagger_h}
\end{equation}
is gauge-equivalent for $\zeta \in \C^\times$ to the family
\begin{equation}
    \nabla' _\zeta = \zeta^{-1/2} \Phi + \delbar'_E + \partial'_E + \mathcal{O}(\zeta^{1/2}),
\end{equation}
and furthermore the pair $(\delbar'_E, \Phi)$ defines a $\vec \rho$-stable parabolic Higgs bundle that is not nilpotent.
\end{thm}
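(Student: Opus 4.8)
The plan is to run the construction of Section~\ref{section:NilpotentWKB} verbatim over the open surface $C \setminus D$ and then control the behaviour at the punctures separately. Concretely, I would fix the adapted harmonic metric $h$ supplied by the parabolic NAHT, set $\mathcal{L}_1 = \ker \phi$ and $L_2 = L_1^{\perp_h}$, and form the $C^\infty$-splitting $E = L_1 \oplus L_2$. Decomposing $D_h = A_1 + D' + A_{-1}$ as in~(\ref{eq:CxnSplitting}) and conjugating $\nabla_\zeta$ by the fibrewise-constant transformation $g(\zeta) = \diag(\zeta^{1/4}, \zeta^{-1/4})$ gives, by the same computation as in~(\ref{eq:NewFamFlatCxns}),
\[
    \nabla'_\zeta = \nabla_\zeta . g(\zeta) = \zeta^{-1/2}(A_{-1} + \phi) + D' + \mathcal{O}(\zeta^{1/2}),
\]
so that $\Phi := A_{-1} + \phi$ and $\delbar'_E := (D')^{(0,1)}$. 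Since $g(\zeta)$ is constant on $C$, it introduces no new $z$-dependence and can neither create nor destroy singular behaviour; for each fixed $\zeta$ a choice of fourth root makes $g(\zeta)$ a genuine gauge transformation, which is all the asserted gauge-equivalence requires.

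Next I would establish that $(\delbar'_E, \Phi)$ is a holomorphic Higgs bundle on $C \setminus D$, which is identical to Proposition~\ref{prop:NewHiggsField}: flatness of $\nabla'_\zeta$ yields $[\Phi \wedge \Phi] = 2[A_{-1} \wedge \phi] = 0$, and since $\phi$ is a nonzero $(1,0)$-form this forces $A_{-1}$ to be of type $(1,0)$, whence $D'\Phi = 0$ gives $\delbar'_E \Phi = 0$; the underlying bundle $\mathcal{E}' = \mathcal{L}_1 \oplus \mathcal{L}_2$ is the associated graded of the filtration defined by $\phi$. Non-nilpotency of $\Phi$ then follows exactly as in Proposition~\ref{prop:SL2TwistorLine}: writing $\delbar_E = \bar{\eta}_{-1} + \delbar'_E + \bar{\eta}_1$ and $\partial_E^h = \eta_{-1} + \partial'_E + \eta_1$, holomorphicity of $\mathcal{L}_1 = \ker\phi$ gives $\bar{\eta}_{-1} = 0$, unitarity of $D_h$ gives $A_{-1} = \eta_{-1}$ with $\eta_{-1} = 0 \iff \bar{\eta}_1 = 0$, and $\bar{\eta}_1 = 0$ is by Proposition~\ref{prop:FixedPoints} exactly the condition that $(\delbar_E, \phi)$ be a $\C^\times$-fixed point. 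As this is excluded, $A_{-1} \neq 0$ and $\tfrac12\Tr \Phi^2 = \psi \cdot A_{-1} \not\equiv 0$, so $\Phi$ is not nilpotent.

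The genuinely new step, and the one I expect to be the \emph{main obstacle}, is the behaviour at the punctures: I must exhibit flags and weights turning $(\delbar'_E, \Phi)$ into a strongly parabolic Higgs bundle with the same weight data $\vec\rho$. The key input is~(\ref{eq:ParChernCxn}): the adapted Chern connection has a purely diagonal polar part $\tfrac1z \diag(\alpha,-\alpha)\,dz$, so its off-diagonal component $A_{-1}$ is regular at every puncture; consequently $\Phi = \phi + A_{-1}$ has the same polar part, and in particular the same residue, as $\phi$. I would then \emph{retain the original flags} $F'^1_i := F^1_i$ and weights $\pm\rho_i$: at a puncture where $\phi$ has a pole, $\operatorname{res} \Phi = \operatorname{res}\phi$ is already strictly upper-triangular with respect to $F^1_i$ by the original parabolic condition (and there $F^1_i = (\mathcal{L}_1)_{p_i}$), while at a puncture where $\phi$ is regular one has $\operatorname{res}\Phi = 0$ and the condition is vacuous. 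Hence $(\delbar'_E, \Phi)$ is strongly parabolic of weight $\vec\rho$.

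Finally, for $\vec\rho$-stability I would adapt Proposition~\ref{prop:NewHiggsField} to parabolic degrees. Because $\Phi$ contains the nonzero off-diagonal piece $\phi \colon \mathcal{L}_2 \to \mathcal{L}_1 \otimes K_C(D)$, the subbundle $\mathcal{L}_2$ is not $\Phi$-invariant; any $\Phi$-invariant line subbundle $\mathcal{L}$ therefore projects nontrivially onto $\mathcal{E}'/\mathcal{L}_2 \simeq \mathcal{L}_1$, giving $\mathrm{pdeg}\,\mathcal{L} \leq \mathrm{pdeg}\,\mathcal{L}_1$. Keeping the original flags guarantees that the fibre $(\mathcal{L}_1)_{p_i}$ and its weight are unchanged, and since the underlying smooth degree is a topological invariant one has $\mathrm{pdeg}\,\mathcal{L}_1 = \mathrm{pdeg}_{\mathrm{old}}\,\mathcal{L}_1 < 0$, the latter because $\mathcal{L}_1 = \ker\phi$ is a $\phi$-invariant subbundle of the $\vec\rho$-stable $(\delbar_E, \phi)$. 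This yields $\mathrm{pdeg}\,\mathcal{L} < 0$ for every invariant $\mathcal{L}$, i.e.\ $\vec\rho$-stability. The delicate points to watch, and where the parabolic bookkeeping really has to be done, are that the global smooth splitting $\ker\phi \oplus (\ker\phi)^{\perp_h}$ coincides near a pole puncture with the local adapted holomorphic splitting (so that~(\ref{eq:ParChernCxn}) genuinely applies to $A_{-1}$), and that the parabolic-degree comparison for the projection $\mathcal{L} \to \mathcal{L}_1$ respects weights.
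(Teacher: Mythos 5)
Your proposal follows essentially the same route as the paper: apply the constant gauge transformation $g(\zeta)=\diag(\zeta^{1/4},\zeta^{-1/4})$, observe via~(\ref{eq:ParChernCxn}) that the off-diagonal part $A_{-1}$ of the adapted Chern connection is regular at the punctures so that $\Phi=\phi+A_{-1}$ has the same residues as $\phi$, and then import the holomorphicity, non-nilpotency and stability arguments of Section~\ref{section:NilpotentWKB}. The paper's proof is terser (it simply asserts that the earlier discussion ``carries over verbatim'' once the residues are controlled), whereas you spell out the flag/weight bookkeeping and the parabolic-degree version of the stability argument; this is a correct elaboration of the same idea rather than a different approach.
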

%\newpage
\section{The toy model} \label{sec:ToyModel}
In this section we study the so-called \textit{toy model}, the moduli space of parabolic $\SL (2, \C)$-Higgs bundles over a four-punctured sphere, which has the minimal real dimension possible, namely four. We are particularly interested in the nilpotent cone, i.e.\ the fiber $h^{-1} (0)$ of the Hitchin fibration. The non-singular part has been described in great detail, see  e.g.~\cite{FMSW}. We have not found the details below spelled out in detail in the literature and found them useful as a testing ground for the project carried out here.

Let $C = \CP^1$ and $D = p_1 + \dots + p_4$ for $p_1=0$, $p_2 = 1$, $p_3=\infty$ and $p_4 = p$, a configuration that can always be achieved through an appropriate M\"obius transformation. We start with the trivial vector bundle $E = \underline{\C}^2 \to C$, which endowed with a complex structure $\bar\partial_E$ will be isomorphic to $\mathcal{O}(-n) \oplus \mathcal{O}(n)$ for some non-negative integer $n$. Notice that in our conventions of $| \rho_i | < 1/2$ we find for a line subbundle $\mathcal{L}$ of degree $k$ that 
\begin{equation}
    k-2 < \mathrm{pdeg} \; \mathcal{L} < k+2
\end{equation}
and hence any $\phi$-invariant line subbundle of degree at least 2 would automatically destabilize a Higgs bundle $(\mathcal{E}, \phi)$ by~(\ref{eq:stability}). A complete flag $\mathcal{F}_i$ at each puncture is given by the choice of a line $L_i$. We will characterize nilpotent parabolic Higgs bundles in 2 cases, depending on whether the Higgs field vanishes or not. \\

\textbf{Case 1:} $\phi = 0$.

In this case any subbundle is $\phi$-invariant and the stability condition reduces to that of parabolic vector bundles, hence we have the two subcases $n=0$ and $n=1$.

If $\mathcal{E} = \mathcal{O}(0) \oplus \mathcal{O}(0)$ we can trivialize the bundle. Let us first note that if all 4 flags $L_i$ were to coincide, the degree $0$ line bundle passing through them would have positive parabolic degree $\sum \rho_i$ and hence destabilize $(\mathcal{E}, \phi)$. If three flags were to coincide (say $L_i = L_j = L_k \neq L_l$), the two degree zero line bundles passing through either line would have parabolic degree $\pm (\rho_i + \rho_j + \rho_k - \rho_l)$ and hence one would not be strictly negative. Similarly, one can rule out that there are two pairs of lines that agree. In summary, we can find a basis for $\mathcal{O} \oplus \mathcal{O}$ in which the lines take the form 
\begin{equation} \label{eq:4lines}
L_1 = \C \begin{pmatrix} 0 \\ 1 \end{pmatrix}, \; L_2 = \C \begin{pmatrix} 1 \\ 1 \end{pmatrix}, \; L_3 = \C \begin{pmatrix} 1 \\ 0 \end{pmatrix}, \; L_4 = \C \begin{pmatrix} w \\ 1 \end{pmatrix},
\end{equation}
for some $w \in \C \cup \{ \infty \}$ by an appropriate coordinate transformation (and, if necessary, relabelling of the punctures $p_i$).

Next, note that a holomorphic line subbundle must have non-positive degree and a line subbundle of degree $\leq -2$ automatically has negative parabolic degree. For the degree $-1$ and degree $0$ line subbundles, we must take a closer look at the flag structure as well as the parabolic weights. If $w \notin \{ 0, 1, \infty \}$, a degree zero line subbundle can hit at most one of the flags and will not destabilize as long as 
\begin{equation}
\rho_{\sigma(1)} < \rho_{\sigma(2)} + \rho_{\sigma(3)} + \rho_{\sigma(4)}    
\end{equation}
for any permutation $\sigma \in S_4$. Similarly, if $w \in \{ 0,1,\infty \}$ then there is a unique degree zero line subbundle which meets two of the flags, and for it to not destabilize $(\mathcal{E} , \phi)$ requires $\rho_1 + \rho_4 < \rho_2 + \rho_3$ if $w=0$, and analogously for the other two cases. If $w \neq p$ then a degree $-1$ line subbundle can meet at most 3 flags (the tautological bundle for example meets the flags over $0$, $1$, and $\infty$), so stability requires
\begin{equation}
1+ \rho_{\sigma(1)} > \rho_{\sigma(2)} + \rho_{\sigma(3)} + \rho_{\sigma(4)}    
\end{equation}
for all permutations $\sigma \in S_4$. If on the other hand $w = p$, then the tautological bundle meets all 4 lines and $(\mathcal{E}, \phi)$ is unstable unless $\rho_1 + \rho_2 + \rho_3 + \rho_4 <1$. From now on, we will assume that all of these conditions are fulfilled, i.e. 
\begin{align}
    \rho_{\sigma(1)} < \, &\rho_{\sigma(2)} + \rho_{\sigma(3)} + \rho_{\sigma(4)} < 1 + \rho_{\sigma(1)} \; \forall \sigma \in S_4, \\
    &\rho_{\sigma(1)} + \rho_4 < \rho_{\sigma(2)} + \rho_{\sigma(3)} \; \forall \sigma \in S_3, \label{eq:parWeightsTwoAndTwo} \\
    &\rho_1 + \rho_2 + \rho_3 + \rho_4 <1, \label{eq:SumParWeights}
\end{align}
which has solutions in an open subset of $(0, 1/2)^4$ that includes for example $\rho_1 = \rho_2 = \rho_3 = 1/4$,  $\rho_4 = 1/8$. As we have just seen, there is a $\CP ^1$ worth of parabolic bundles labelled by $w$.

In the case that $\mathcal{E} = \mathcal{O}(-1) \oplus \mathcal{O} (1)$, one can proceed in a similar vein. Because of~(\ref{eq:SumParWeights}) this bundle is destabilized by a degree $1$ holomorphic subbundle and will not appear in the moduli space. \\

\textbf{Case 2:} $\phi \neq 0$.

As before, $\mathcal{E} \simeq \mathcal{O}(-n) \oplus \mathcal{O}(n)$ and hence
\begin{equation}
    \End (\mathcal{E}) \simeq \begin{pmatrix} \O  & \O (-2n) \\ \O (2n) & \O \end{pmatrix}
\end{equation}
of which we aim to find a nontrivial section. Let us start by assuming that $\phi$ has non-vanishing residues at all 4 punctures. Since $\phi$ is furthermore nilpotent, we can trivialize the bundle away from a point, say $p_3 = \infty$, and write it as 
\begin{equation} \label{eq:nilpotentHiggs}
    \phi = \frac{dz}{z (z-1) (z-p)} \begin{pmatrix} a(z) \\ b(z) \end{pmatrix} \otimes \begin{pmatrix} -b(z) & a(z) \end{pmatrix} = \frac{dz}{z (z-1) (z-p)} \begin{pmatrix} -ab & a^2 \\ -b^2 & ab \end{pmatrix}
\end{equation}
for some polynomials $a,b \in \C [z]$. The condition that $\phi$ has a pole of first order at $z = \infty$ puts some constraints on the order of $a$ and $b$, which can be seen by switching to the $w = 1/z$ chart:
\begin{align} \begin{split}
        \phi &= \frac{-dw/w^2}{w(1-w)(1-wp)/w^4} \begin{pmatrix} -a(w^{-1})b(w^{-1}) & w^{2n}a(w^{-1})^2 \\ -w^{-2n}b(w^{-1})^2 & a(w^{-1})b(w^{-1}) \end{pmatrix} \\
    &= -\frac{-dw}{w(1-w)(1-wp)} \begin{pmatrix} -w^2 ab & w^{2n+2}a^2 \\ -w^{-2n+2} b^2 & w^2 ab \end{pmatrix}, \end{split}
\end{align}
from which we conclude $\deg a \leq 1+n$, $\deg b \leq 1-n$. This leaves the two cases $n=0$, i.e.\ $\mathcal{E} \simeq \O \oplus \O$, and $n=1$, i.e.\ $\mathcal{E} \simeq \O (-1) \oplus \O (1)$.

Let us start again with the case that $\mathcal{E} \simeq \O \oplus \O$. In this case both $a$ and $b$ are linear, say $a(z) = Az + B$ and $b(z) = Cz +D$. If the two were linearly dependent, say $a = \lambda b$ for some $\lambda$, then the line $\C \begin{pmatrix} 1, -\lambda \end{pmatrix}^T$ would be globally in the kernel, and the degree zero line bundle passing through it would destabilize $(\mathcal{E}, \phi)$. Hence, $a$ and $b$ are linearly independent, in which case the linear polynomial $a - \lambda b$ cannot have more than one root for any choice of $\lambda$. This forces the 4 flags to be mutually distinct, so that we can use our gauge freedom to bring them into the same form as in~(\ref{eq:4lines}). From there, it is easy to see that $a(z) = 1, b(z) = z, w=p$ and consequently
\begin{equation}
    \phi_p = \frac{dz}{z(z-1)(z-p)} \begin{pmatrix} z & -z^2 \\ 1 & -z \end{pmatrix}
\end{equation}
is the only solution (up to an overall scaling by $\zeta \in \mathbb{C}^\times$). The underlying vector bundle $\mathcal{E}$ with parabolic structure $w=p$ is called a \textit{wobbly bundle}: A stable vector bundle that admits a non-zero nilpotent Higgs field.

For the case $\mathcal{E} \simeq \mathcal{O}(-1) \oplus \O (1)$ we proceed similarly: $a$ is a quadratic polynomial while $b$ has to be constant. Assume that $b \neq 0$, then there is a gauge transformation such that
\begin{equation}
   \phi =  \frac{dz}{z (z-1) (z-p)} \begin{pmatrix} -b & a \\ 0 & 1/b \end{pmatrix} \begin{pmatrix} -ab & a^2 \\ -b^2 & ab \end{pmatrix} \begin{pmatrix} -1/b & a \\ 0 & b \end{pmatrix} = \frac{dz}{z (z-1) (z-p)} \begin{pmatrix} 0 & 0 \\ 1 & 0 \end{pmatrix}.
\end{equation}
This is the limit $\zeta . ( \O \oplus \O , \phi_p)$ as $\zeta \to \infty$ of the nilpotent Higgs bundle just discussed. Notice that the underlying parabolic vector bundle becomes unstable (as $\O (1)$ is a subbundle of positive parabolic degree). Indeed, this Higgs bundle is known as the \textit{uniformization point}: The point where the nilpotent cone and the Hitchin section intersect.

This is the full discussion if none of the residues vanish. Now let us assume that $ \phi$ is regular at some puncture, say at $p_3 = \infty$. In this case, the description~(\ref{eq:nilpotentHiggs}) holds but the subsequent discussion shows that $\phi$ has odd order at $\infty$. Another option is that $\mathrm{res} \phi = 0$ at two punctures, say $p_2 = 1$ and $p_3 = \infty$. Then $\phi$ takes the form
\begin{equation}
    \phi = \frac{dz}{z(z-p)} \begin{pmatrix} -ab & a^2 \\ -b^2 & ab \end{pmatrix}
\end{equation}
and regularity at $\infty$ implies that $\deg a \leq n$ and $\deg b \leq -n$, hence $n=0$. Thus the kernel is globally constant and there is a unique line bundle passing through it. Stability requires that the flags over the two punctures with vanishing residue of the Higgs field disagree with this line, in accordance with (\ref{eq:parWeightsTwoAndTwo}). In case that these two flags disagree with one another, we can bring the collection of flags into the general position of (\ref{eq:4lines}) with parabolic structure $w=0$, such that the Higgs bundle is
\begin{equation}
    \left( \mathcal{E}_0 = \mathcal{O} \oplus \mathcal{O} , \,   \phi_0 = \frac{ dz}{z(z-p)} \begin{pmatrix} 0 & 0 \\ 1 & 0 \end{pmatrix}, L_1 = \begin{pmatrix} 0 \\ 1 \end{pmatrix} = L_4, L_2 = \begin{pmatrix} 1 \\ 1 \end{pmatrix}, L_3 = \begin{pmatrix} 1 \\ 0 \end{pmatrix} \right).
\end{equation}
There is no gauge freedom left, so that there is a $\mathbb{C}^\times$ orbit of these given by rescaling $\phi_0 \mapsto \zeta \phi_0$. There is also the case that the flags over $1$ and $\infty$ agree, in which case we can choose a gauge where that flag is $(1 \; 0)^T$. There is now some gauge freedom left which can undo the $\mathbb{C}^\times$ action, so that there is in fact only a single point in the moduli space representing this equivalence class, and this point is given as the limit $\lim_{\zeta \to \infty} \zeta. (\mathcal{E}_0, \phi_0)$ of the bundle just discussed. Notice that the underlying parabolic vector bundle is unstable: The degree 0 line subbundle given by the first summand destabilizes it.

By analogy, there are two more $\mathbb{C}^\times$-families of Higgs bundles with nilpotent Higgs field, together with a point in the rescaling limit. Concretely, we have
\begin{align}
     &\left( \mathcal{E}_1 = \mathcal{O} \oplus \mathcal{O} , \,   \phi_1 = \frac{ dz}{(z-1)(z-p)} \begin{pmatrix} 1 & -1 \\ 1 & -1 \end{pmatrix}, L_1 = \begin{pmatrix} 0 \\ 1 \end{pmatrix}, L_2 = \begin{pmatrix} 1 \\ 1 \end{pmatrix} = L_4, L_3 = \begin{pmatrix} 1 \\ 0 \end{pmatrix} \right), \\
     &\left( \mathcal{E}_\infty = \mathcal{O} \oplus \mathcal{O} , \,   \phi_\infty = \frac{ dz}{(z-p)} \begin{pmatrix} 0 & 1 \\ 0 & 0 \end{pmatrix}, L_1 = \begin{pmatrix} 0 \\ 1 \end{pmatrix}, L_2 = \begin{pmatrix} 1 \\ 1 \end{pmatrix}, L_3 = \begin{pmatrix} 1 \\ 0 \end{pmatrix} = L_4 \right).
\end{align}
with parabolic structure $w=1$ resp.\ $w=\infty$.

To conclude our discussion, the nilpotent cone of the toy model has the following structure: There is a $\CP ^1$ worth of stable parabolic vector bundles parametrized by the parabolic structure $w$. For generic $w$, the bundle does not support a non-vanishing nilpotent Higgs field. However, for the special values $w \in D$ there is $\mathbb{C}^\times$-family of nilpotent Higgs fields supported on the corresponding parabolic bundle, parametrized by some scaling parameter $\zeta \in \mathbb{C}^\times$. Moreover, the large $\zeta$ limit exists and is another nilpotent Higgs bundle, where the underlying parabolic bundle is now unstable. Topologically, it hence has the structure of an affine $D_4$-configuration of $\CP ^1$'s (figure \ref{pic.toymodel}).

\begin{figure}[ht]
\begin{center}
\includegraphics[width=0.5\linewidth]{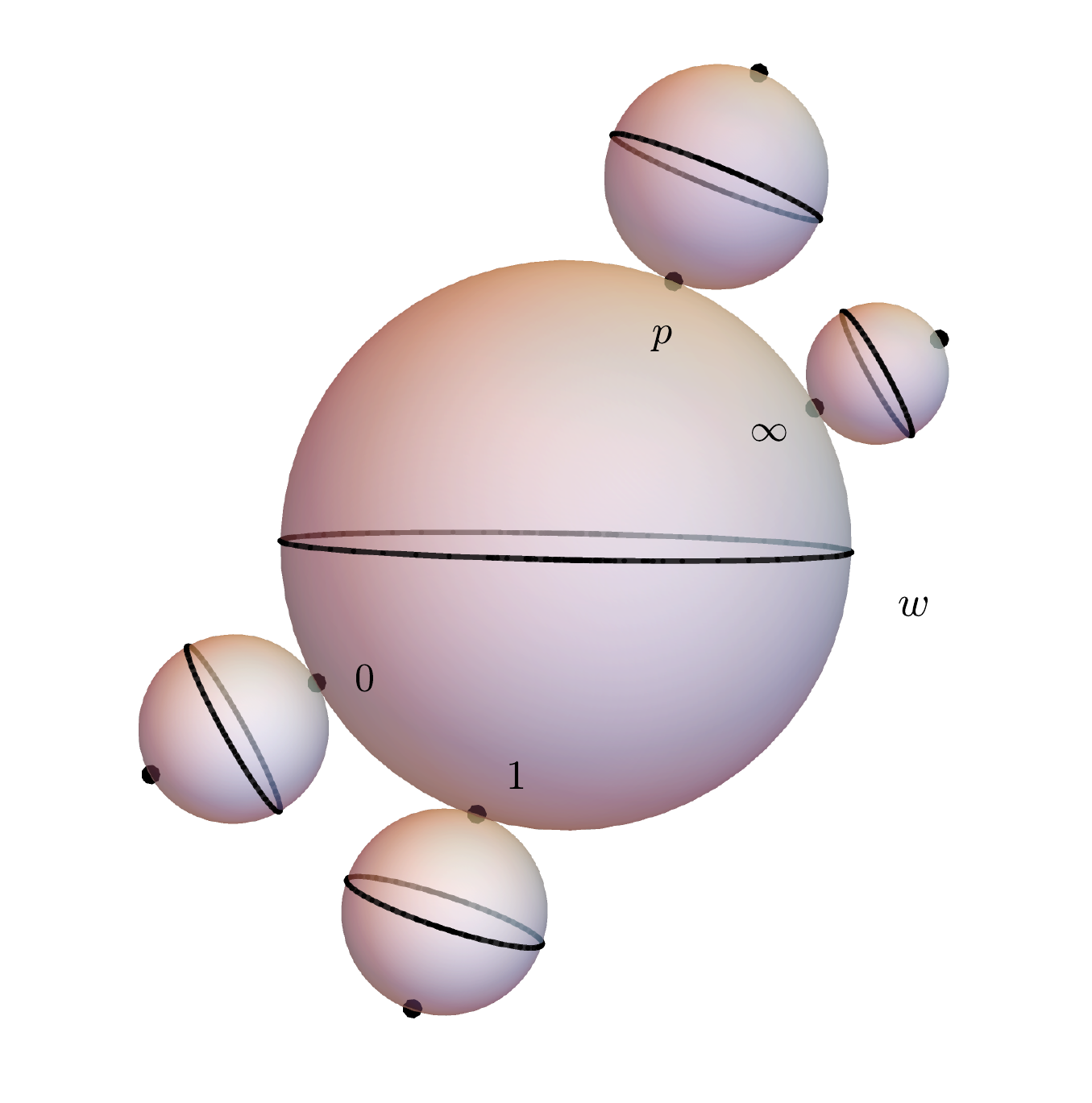}
%\captionsetup{labelformat=empty}
\caption{The nilpotent cone of the toy model.}\label{pic.toymodel}
\end{center}
\end{figure}

The locus of $\C$-VHS consists of the central $\CP^1$ (where the Higgs field is zero) as well as the four limit points at the tips of the external spheres. We know that to any other Higgs bundle inside of the nilpotent cone there is canonically associated a meromorphic quadratic differential, the space of which is one-dimensional and therefore uniquely determined up to a scalar $c \in \C$ by
\begin{equation}
    q_2 = \frac{c \cdot dz^2}{z (z-1) (z-p)}
\end{equation}
which indeed has a first-order pole at $z=\infty$. One could hope that there was an easy relationship between a $\C^\times$-orbit of a nilpotent Higgs field and this constant $c$ but making a quantitative statement would require determining the Chern connection (or at least a certain component of it) which means solving Hitchin's equations - a very difficult undertaking. The qualitative analysis from Proposition~\ref{prop:SmoothNotHolomorphic} still holds, meaning that the assignment is not holomorphic because the constant $c$ is bounded.

\begin{figure}[ht]
\begin{center}
\includegraphics[width=0.5\linewidth]{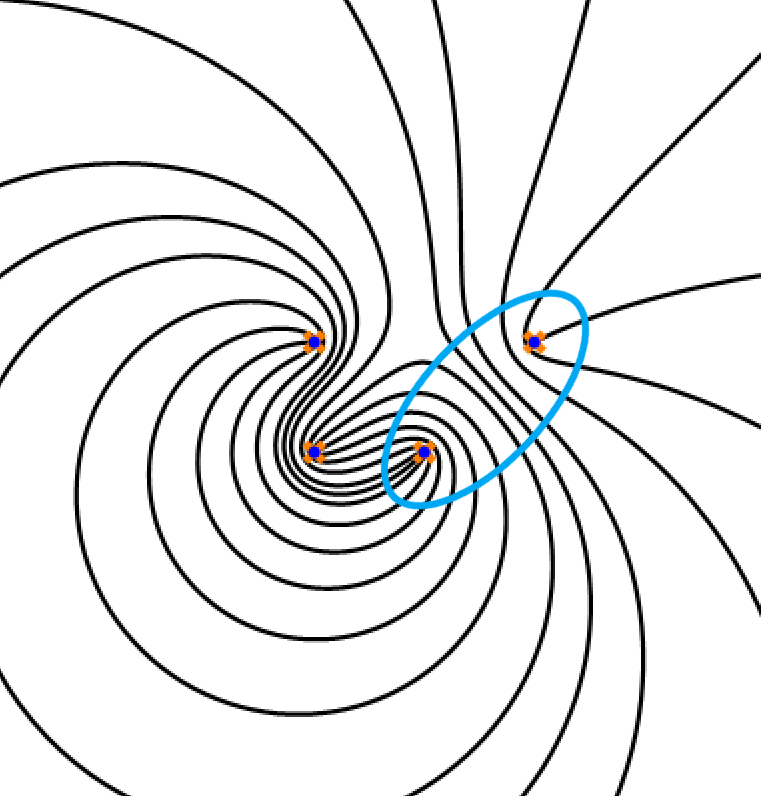}
%\captionsetup{labelformat=empty}
\captionsetup{width=.8\linewidth}
\caption{The trajectory structure for a generic quadratic differential in the toy model, and a WKB loop. The four first-order poles are at the marked locations $0$, $1$, $i$, $1+i$ and each emit a single ray. The genericity condition concerns the phase at which these rays are emitted.}\label{pic.SNtoymodel}
\end{center}
\end{figure}

%\newpage
\begin{appendices}
\section{WKB curves via translation surfaces} \label{app:WKBcurves}
In this appendix we will discuss the existence of WKB curves on Riemann surfaces (with holomorphic quadratic differentials). For this, we will remind the reader of the interpretation of such surfaces as \textit{half-translation surfaces} and construct a few explicit examples. Afterwards we will discuss the general theory. We do not claim any originality but were unable to find this discussed in the literature. %For more details on translation and half-translation surfaces, see e.g.~\cite{MassartSurvey}.

First, recall our definition of a WKB curve: Start with a connected Riemann surface $C$ with an $\mathrm{SL}(2,\C)$-Higgs bundle $(\delbar_E, \phi)$ such that $\phi$ is not nilpotent, then $\phi$ has eigenvalues $\pm \mu (z) dz$ that are generically nonzero. These define a branched double-covering $\Sigma \subset T^\ast C$ of $C$ known as the \textit{Spectral Curve} that keeps track of the two eigenvalues. Of course, $( \mu(z) dz )^2 = 1/2 \Tr (\phi^2)$, so the eigenvalues only depend on the quadratic differential $q_2$ associated to $(\delbar_E, \phi)$ via the Hitchin map, and then so does $\Sigma = \Sigma_{q_2}$. Around any point $z_0 \in C$ where $\mu (z_0) \neq 0,\infty$ one can define a local coordinate $w = w(z)$ via
\begin{equation}
    w(z) := \int_{z_0}^z \mu(z) dz = \int_{z_0}^z \sqrt{\frac{1}{2} \Tr(\phi(z)^2)} \hspace{0.2cm},
\end{equation}
and a change in base point corresponds to a constant translation of this local coordinate, while a different choice of square root (i.e.\ the exchange $\mu \leftrightarrow -\mu$) corresponds to an overall change of sign of $w$. 

These local coordinates give $C$ the structure of a \textit{half-translation surface}: Roughly speaking, a half-translation surface is a finite or countable collection of polygons in the Euclidean plane whose edges are identified by maps of the form $w \mapsto \pm w + a$ called \textit{half-translations}. Given such a half-translation surface, the inherited metric induces a complex structure on the glued surface, and the quadratic differential that is locally defined as $(dw)^2$ is invariant under half-translations descends to a globally well-defined quadratic differential.

There is one more subtlety that is worth pointing out for our discussion, namely the role of zeroes or poles of the (meromorphic) quadratic differential. In the description above, we allow zeroes of arbitrary order but poles only of order 1. While the neighborhood of a regular point is Euclidean as described above, zeroes or poles correspond to conical singularities on the side of half-translation surfaces: An opening angle larger than $(2+k) \pi $ corresponds to a zero of order $k$ ($k \geq 1$), while an opening angle of $\pi$ signifies a simple pole.

A special type of half-translation surface is a translation surface: All edge identifications preserve the orientation and are of the form $w \mapsto w + a$. These surfaces correspond to the case that the quadratic differential is globally the square of an Abelian differential $A \in H^0(C, K_C)$. We give some examples of translation and half-translation surfaces in Figures~\ref{fig:TranslationSurf} resp.~\ref{fig:HalfTranslationSurf}. Determining the genus of the corresponding surface is an easy exercise using the Euler characteristic: For example, the left surface in Figure~\ref{fig:TranslationSurf} contains 2 vertices, 12 edges (7 external and 5 internal) and 6 faces, yielding a surface of Euler characteristic $2-2g=2-12+6=-4$ and hence of genus $3$. Equivalently, one can count the order of zeroes and poles and determine this way the degree of the canonical bundle. Of course, the individual rectangles are not required to be of the same size (though sides that are to be identified need to be of the same length), and the complex structure of the resulting surface depends on these choices.

\begin{figure}[ht]
    \centering
   \includegraphics[width=0.9\linewidth]{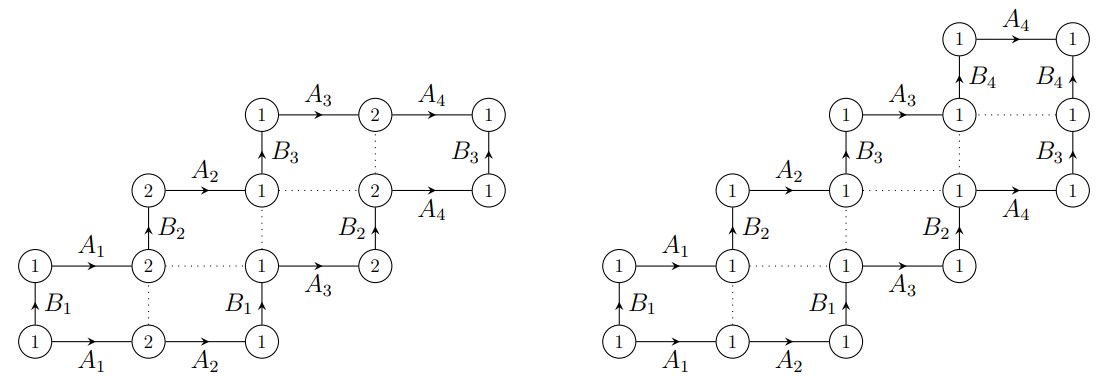}
   \captionsetup{width=.8\linewidth}
    \caption{Two translation surfaces. The labelling of the edges prescribes the gluing and induces an identification of the vertices which we have included for the convenience of the reader. A staircase composed of $2n$ rectangles as on the left and one with $2n-1$ as on the right both yield a surface of genus $n$.}
    \label{fig:TranslationSurf}
\end{figure}

Now we can come back to our original intention of constructing examples of WKB curves on Riemann surfaces $C$ equipped with a quadratic differential. Upon translating the problem to the language of half-translation surfaces, a curve $\gamma : [0,1] \rightarrow C$ becomes a collection of curves $\gamma_i : [t_i , t_{i+1}] \rightarrow P$ where $P$ is the set of polygons defining the half-translation structure, and $\gamma_i (t_i), \gamma_i (t_{i+1}) \in \partial P$ such that $\gamma_i (t_{i+1})$ and $\gamma_{i+1} (t_{i+1})$ are identified when gluing the edges of $P$ in the prescribed fashion. With this understood, a WKB curve is one for which $\Im \gamma_i$ is a strictly increasing function on $[t_i , t_{i+1}]$ for all $i$. It is now easy to see that the examples we have provided in figures~\ref{fig:TranslationSurf} and~\ref{fig:HalfTranslationSurf} have WKB curves given by vertical paths connecting two edges labeled by $A_i$.

\begin{figure}[ht]
    \centering
    \includegraphics[width=0.9\linewidth]{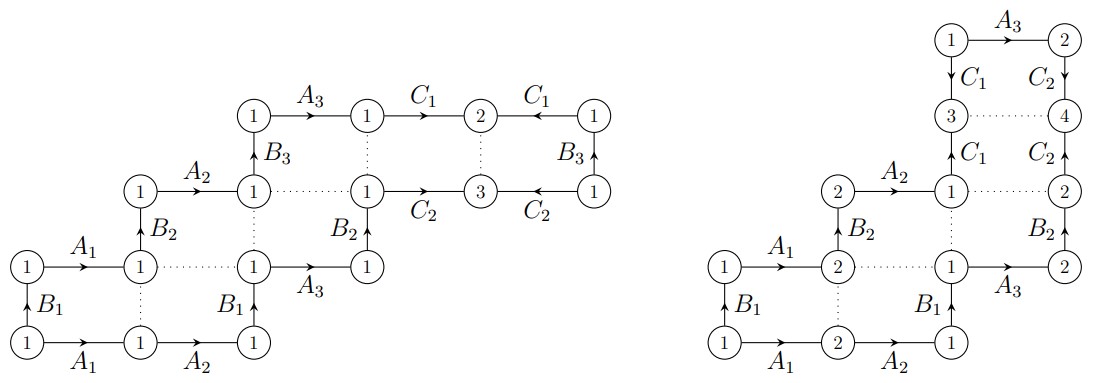}
    \captionsetup{width=.8\linewidth}
    \caption{Two half-translation surfaces: Notice that the vertices $2$ and $3$ in the left figure (resp. $3$ and $4$ in the right one) have opening angle $\pi$ and correspond to first-order poles of the quadratic differential. A configuration such as the one on the left with $2n+1$ rectangles yields a surface of genus $n$, the same is true for a configuration such as the one on the right consisting of $2n$ rectangles.}
    \label{fig:HalfTranslationSurf}
\end{figure}

Let $D \subset C$ be the set of zeroes and poles of $q_2$, and $C^o := C-D$. We are interested in curves $\gamma$ that evade the zeroes and poles of $q_2$ and so we are really interested in homotopy classes of loops $[\gamma] \in \pi_1 (C^o)$. In particular, this shows that two of examples considered in the previous paragraph are homotopic if and only if they connect the same two edges.

WKB loops have the special feature that their lift to the spectral curve $\Sigma$ consists of two disjoint loops. The reason is that $\Re (\mu(z)) > 0$ along a WKB curve, so the two sheets of the covering $\Sigma \to C$ cannot mix in a neighborhood of $\gamma$. It is therefore convenient to consider the notion of WKB curves on $\Sigma$ which is by itself a translation surface: The Liouville 1-form $\lambda$ on $T*C$ naturally restricts to an abelian differential on $\Sigma$.

Let us now discuss the general existence of WKB curves on half-translation surfaces where we will restrict our attention to quadratic differentials with at most simple poles. Notice that the foliation induced by $-q_2$ is equivalent to the one coming from $q_2$ (the segments are unoriented) but that for any $\theta \in \mathbb{R}/[0,\pi] \simeq \mathbb{RP}^1$ the horizontal foliation $\mathcal{F}_\theta$ coming from $e^{i \theta} q_2$ is transverse to the original $\mathcal{F} = \mathcal{F}_0$.

Now let $p \in C^o$, we want to construct a homologically non-trivial WKB loop that passes through $p$. It is a well-known fact (see e.g.\ \cite{Strebel}, \cite{GMN09}) that there are at most countably many angles $\theta$ for which the leaf of $\mathcal{F}_\theta$ passing through $p$ is non-recurrent so that generically the leaf forms a dense subset on $C$. It is then clear that in a small neighborhood of $p$, a leaf at a generic angle consists of parallel line segments and that one can smoothly connect a different line segment from the one that contains $p$ via a small path that remains transverse to $\mathcal{F}$ to form a loop. A sketch of this can be found in figure~\ref{pic.foliations}.

\begin{figure}[ht]
\begin{center}
\includegraphics[width=0.5\linewidth]{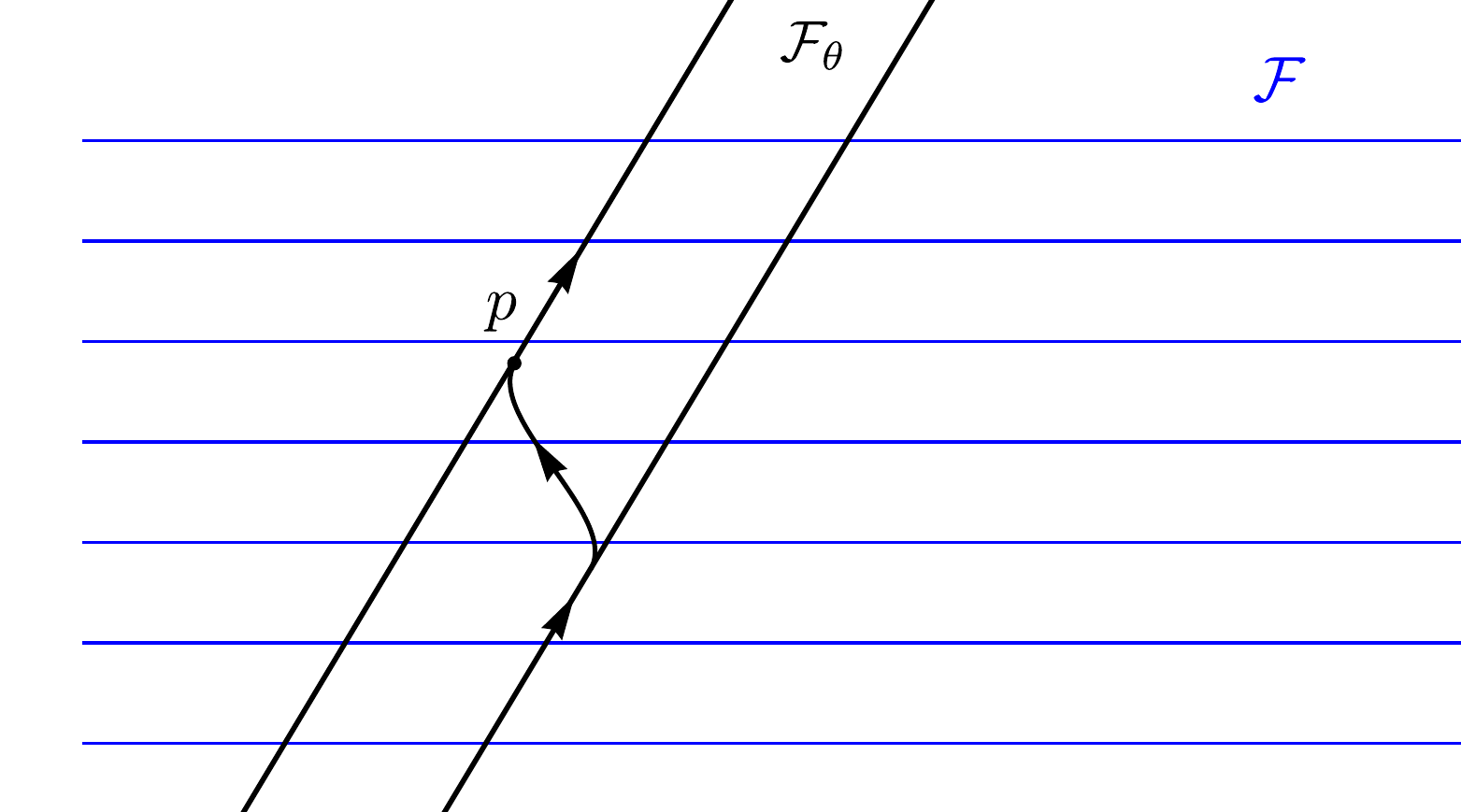}
%\captionsetup{labelformat=empty}
\captionsetup{width=.8\linewidth}
\caption{A minor detour connects two strands of a generic leaf of $\mathcal{F}_\theta$ to create a closed loop that is everywhere transverse to the foliation $\mathcal{F}$.}\label{pic.foliations}
\end{center}
\end{figure}

This carries a natural orientation when interpreted via its lift to $\Sigma$. Observe that $\gamma$ is necessarily homologically non-trivial as we shall explain now. If $\gamma$ were to close up without the detour, one could easily compute its period 
\begin{equation}
    Z_\gamma = l_\gamma \times e^{i\theta}
\end{equation}
where $l_\gamma$ is the length of $\gamma$. Since the period is nonzero and depends linearly on the homology class, this implies that it is homologically nontrivial. If we include the detour then the computation of the period is slightly more cumbersome but only differs marginally from the previous outcome, so in particular remains non-zero. This proves the following

\begin{prop} \label{prop:ExistenceWKBCurves}
Let $C$ be a compact Riemann surface and $q_2$ a meromorphic quadratic differential on $C$ that is non-zero and has at most simple poles. Then $(C, q_2)$ admits a WKB loop.
\end{prop}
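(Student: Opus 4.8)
The plan is to run the entire argument on the half-translation surface $(C, q_2)$, exploiting the fact that being a WKB curve is a transversality condition. Recall that in a local flat coordinate $w$ with $q_2 = dw^2$, the leaves of the horizontal foliation $\mathcal{F} = \mathcal{F}_0$ are the segments $\Im w = \mathrm{const}$, and $\gamma$ is a WKB curve exactly when $\Im(w \circ \gamma)$ is strictly monotone, i.e.\ when $\gamma$ is everywhere transverse to $\mathcal{F}_0$ with a fixed sign. For any $\theta \in (0,\pi)$ the rotated foliation $\mathcal{F}_\theta$ coming from $e^{i\theta} q_2$ has leaves running in the direction $e^{-i\theta/2}$, which has nonzero imaginary part and is therefore transverse to $\mathcal{F}_0$; after fixing the orientation so that $\Im w$ increases, every arc of a leaf of $\mathcal{F}_\theta$ is automatically a WKB arc. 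The whole task thus reduces to assembling a \emph{closed} loop out of such arcs without destroying transversality, and checking that it is homologically nontrivial.

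First I would fix a base point $p \in C^o = C - D$ and invoke the classical fact (Strebel~\cite{Strebel}, as used in~\cite{GMN09}) that only countably many directions $\theta \in \mathbb{R}/\pi\mathbb{Z}$ give a non-recurrent leaf through $p$. Hence for a generic $\theta \in (0,\pi)$ the leaf $\ell$ of $\mathcal{F}_\theta$ through $p$ is recurrent, in fact dense, and in particular avoids the finite singular set $D$ and returns arbitrarily close to $p$. Recurrence then supplies a second strand of $\ell$ passing through a small flat neighborhood of $p$, nearly parallel to the strand containing $p$.

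Next I would close up the loop: connecting the two nearby strands by a short transverse detour (as in Figure~\ref{pic.foliations}) produces a closed curve $\gamma$. The detour can be taken short and with controlled slope so that it too has strictly increasing $\Im w$, which together with the leaf-arc makes $\gamma$ a genuine WKB loop. To see that $\gamma$ is homologically nontrivial, note that a pure leaf-arc of $\mathcal{F}_\theta$ of Euclidean length $l_\gamma$ would have period $Z_\gamma = l_\gamma\, e^{i\theta} \neq 0$; including the short detour perturbs this only slightly, so $Z_\gamma$ stays nonzero. Since $Z_\gamma = \int_\gamma dw$ is the value on $[\gamma]$ of the period functional attached to $\sqrt{q_2}$ (pulled back from the spectral curve $\Sigma$), which is $\mathbb{C}$-linear in the homology class, a nonzero period forces $[\gamma] \neq 0$ in $H_1(C^o)$, completing the construction.

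I expect the main obstacle to be the bookkeeping in the closing-up step: one must choose $\theta$ and the detour so that transversality to $\mathcal{F}_0$ is preserved uniformly along the junction, and one must ensure both the leaf and the detour stay clear of the conical singularities produced by the zeros and the simple poles (opening angle $\pi$) of $q_2$. Genericity of $p$ and $\theta$, together with the area-preserving (Poincaré) recurrence of the straight-line flow of $\mathcal{F}_\theta$, supply the density and the returning strand, so the real analytic content is concentrated in making the detour explicit while keeping $\Im w$ monotone; the period estimate is then routine.
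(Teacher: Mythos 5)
Your proposal is correct and follows essentially the same route as the paper: pick a generic angle $\theta$ so that the leaf of $\mathcal{F}_\theta$ through $p$ is recurrent (using the Strebel/GMN countability fact), close up two nearby strands by a short transverse detour, and conclude homological nontriviality from the nonvanishing period $Z_\gamma \approx l_\gamma e^{i\theta}$ together with linearity of periods on homology. The only differences are cosmetic (you make the transversality bookkeeping and the direction $e^{-i\theta/2}$ of the rotated foliation explicit, which the paper leaves implicit).
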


Lastly, let us comment on the existence of WKB curves in the case of a non-nilpotent $\SL (n, \C)$-Higgs bundle $(\delbar_E, \phi)$. Let $u := h(\delbar_E, \phi)$ denote its projection to the Hitchin base and $\Sigma = \Sigma_u$ the associated spectral curve. Recall that $\Sigma_u \to C$ is a branched (n:1)-cover and that $(\Sigma_u , \lambda)$ is a translation surface. 

Pick a class $[\gamma ] \in \pi_1 (C)$ that admits a representative $\gamma: [0,1] \to C$ with the following properties:
\begin{enumerate}
    \item Its lift to $\Sigma_u$ is the disjoint union of $n$ loops;
    \item It has extension $\Tilde{\gamma}$ to a tubular neighborhood $U$ of $[0,1] \subset \C$ such that $\Tilde{\gamma}: U \to C$ is holomorphic. 
\end{enumerate}
$U$ is contractible and so $\Tilde{\gamma}^\ast \mathcal{E}$ is trivializable in a global coordinate $z$. The first condition implies that $\Tilde{\gamma}^\ast \phi$ is diagonalizable on $U$ with distinct eigenvalues $\mu_i (z) dz$. A WKB curve is one for which there is an ordering
\begin{equation}
    \Re (\mu_1) > \Re (\mu_2) > \dots > \Re ( \mu_n ) 
\end{equation}
for all $z \in U$. Every ordered pair $\{i,j | i > j \} \subset \{1, \dots, ,n \}$ determines an abelian differential $\lambda_{ij} = (\mu_i - \mu_j) dz$ on $U$ and $\gamma$ is a WKB curve if and only if it is a WKB curve with respect to each of the $\lambda_{ij}$. In other words, there are $\binom{n}{2}$ different foliations $\mathcal{F}_{ij}$ on $U$ and $\gamma$ has to be transverse to all of their leaves.

\begin{comment} We end by making two heuristic observations. The previous argument then generalizes as follows. Pick a point $p \in C$, then generically there is a neighborhood $V \subset C$ and a chart $V \hookrightarrow \C$ in which $p=0$ and $\mathcal{F}_i$ is a straight line of angle $\theta_i \neq \pm \pi/2$. For all but countably many angles $\theta$, the straight lin \end{comment}

Effectively, the asymptotics considered around~(\ref{eq:WKBHigherRank} should be governed by the largest (by real part) eigenvalue $\mu_1$, so even if some of the smaller eigenvalues were to exchange order along $\gamma$ it should not effect the asymptotic behavior. Consequently, the condition on $\gamma$ being eligible for the WKB method would require it to be transverse to $(n-1)$ foliations on $C$.

%Secondly, upon fixing a $p \in C$ that admits a neighborhood $V$ in which all of the foliations trivialize

\begin{comment}
In that case, the global nature of leaves of the associated foliation is well-known (see e.g.\ \cite{Strebel}, Chapter 11.4) and falls into exactly one of the following categories\footnote{The qualifying adjectives match those of~\cite{GMN09} but there is an unfortunate clash of language otherwise: What is called a \textit{WKB curve} there is a leaf of the foliation and thereby a curve transverse to what is called a WKB curve in this text.}:
\begin{enumerate}
    \item A \textit{generic} leaf is asymptotic in both directions to a pole.
    \item A \textit{finite} leaf is either closed or is asymptotic in both directions to a zero of $q_2$.
    \item A \textit{separating} leaf is asymptotic in one direction to a pole, and in the other direction to a zero.
    \item A \textit{divergent} leaf is not closed and does not have an asymptotic limit in at least one direction.
\end{enumerate}

Notice that the foliation induced by $-q_2$ is equivalent to the one coming from $q_2$ (the segments are unoriented) but that for any $\theta \in \mathbb{R}/[0,\pi] \simeq \mathbb{RP}^1$ the horizontal foliation $\mathcal{F}_\theta$ coming from $e^{i \theta} q_2$ is transverse to the original $\mathcal{F} = \mathcal{F}_0$. Now let $p \in C$ be neither a zero nor a pole of $q_2$, we want to construct a homologically non-trivial WKB loop that passes through $p$.
\end{comment}
\end{appendices}

%%%%%%%%%%%%%%%%%%%%%%%%%%%%%%%%%%%%%%%%%%%%%%%%

\bibliography{Main}{}
\bibliographystyle{fredrickson}

\smallbreak

\noindent {\bf  Sebastian Schulz - 
{\sc  University of Texas at Austin, USA.}\\
\tt  s.schulz@math.utexas.edu }

\end{document}